\theoremstyle{plain}
\newtheorem{thm}{Theorem}[section]
\newtheorem{lemm}[thm]{Lemma}
\newtheorem{prop}[thm]{Proposition}
\newtheorem*{claim}{Claim}
\theoremstyle{definition}
\newtheorem{df}[thm]{Definition}
\newtheorem{rem}[thm]{Remark}
\newcommand{\dB}{\dot{B}}
\newcommand{\fB}{\dot{\mathfrak{B}}}
\newcommand{\ifB}{\mathfrak{B}}
\newcommand{\dH}{\dot{H}}
\newcommand{\supp}{\operatorname{supp}}
\renewcommand{\leq}{\leqslant}
\renewcommand{\geq}{\geqslant}
\newcommand{\Z}{\mathbb{Z}}
\newcommand{\lr}[1]{{\left\langle#1\right\rangle}}
\newcommand{\n}[1]{{\left\|#1\right\|}}
\newcommand{\abso}[1]{{\left|#1\right|}}
\newcommand{\lp}[1]{\left[#1\right]}
\newcommand{\Mp}[1]{\left\{#1\right\}}
\renewcommand{\sp}[1]{\left(#1\right)}
\newcommand{\p}{\partial}
\newcommand{\phih}{\phi^{\rm h}}
\newcommand{\phiv}{\phi^{\rm v}}
\newcommand{\Deltah}{\Delta_{\rm h}}
\newcommand{\nablah}{\nabla_{\rm h}}
\newcommand{\divh}{\operatorname{div}_{\rm h}}
\renewcommand{\div}{\operatorname{div}}
\newcommand{\xh}{x_{\rm h}}
\newcommand{\xih}{\xi_{\rm h}}
\newcommand{\Deltahh}{\dot{\Delta}^{\rm h}}
\newcommand{\Deltav}{\dot{\Delta}^{\rm v}}
\newcommand{\iDh}{\Delta^{\rm h}}
\newcommand{\iDv}{\Delta^{\rm v}}
\newcommand{\R}{\mathbb{R}}
\newcommand{\f}{\frac}
\newcommand{\betah}{\beta_{\rm h}}
\newcommand{\uh}{u_{\rm h}}
\begin{document}
\title[Analyticity for the $3$D anisotropic Navier--Stokes equations]
{Analyticity in space and time for global solutions to the anisotropic Navier--Stokes equations in the critical $L^p(\mathbb{R}^3)$ framework}
\author[M.~Fujii]{Mikihiro Fujii}
\address[M.~Fujii]{Graduate School of Science, Nagoya City University, Nagoya, 467-8501, Japan}
\email[M.~Fujii]{fujii.mikihiro@nsc.nagoya-cu.ac.jp}
\author[Y.~Li]{Yang Li}
\address[Y.~Li]{School of Mathematical Sciences and Center of Pure Mathematics, Anhui University, Hefei, 230601, People's Republic of China}
\email[Y.~Li]{lynjum@163.com}
\date{\today} 
\keywords{anisotropic Navier--Stokes, analyticity, critical space, Littlewood--Paley theory}
\subjclass[2020]{76D05, 35A20, 42B25}
\begin{abstract}
In the present paper, we consider the real analyticity of the global solutions to the $3$D incompressible anisotropic Navier--Stokes equations.
We show that if only the horizontal component of initial velocity is small and analytic in $x_3$, then there exists a unique global solution which is analytic in $t>0$ and $x\in \mathbb{R}^3$.
Our functional framework lies in some anisotropic Besov spaces based on $L^p(\mathbb{R}^3)$.
To our best knowledge, this paper is the first contribution to the well-posedness of the anisotropic Navier--Stokes equations in function spaces of the Besov type based on the full $L^p(\mathbb{R}^3)$ setting.
\end{abstract}
\maketitle

\tableofcontents

\section{Introduction}\label{sec:intro}
Let us consider the Cauchy problem for the anisotropic Navier--Stokes equations in $\R^3$, where the vertical viscosity vanishes. 
In modeling geophysical fluids and the Ekman layer, the horizontal viscosity often plays a dominant role compared to the vertical viscosity.
We refer to the monograph \cite{Ped} by Pedlosky for more explanations and physical backgrounds.
The governing equations in Eulerian coordinates take the form 
\begin{align}\label{eq:ANS}
    \begin{cases}
        \partial_t u - \Delta_{\rm h} u + (u \cdot \nabla)u + \nabla P = 0, \qquad & t>0,x \in \mathbb{R}^3, \\ 
        \div  u = 0, & t\geq 0,x \in \mathbb{R}^3, \\
        u(0,x) = u_0(x), & x \in \mathbb{R}^3.
    \end{cases}
\end{align}
Here, we denote by $u=u(t,x):[0,\infty)\times \R^3 \to \R^3$ the velocity field and $P=P(t,x):(0,\infty)\times \R^3 \to \R$ the pressure respectively, while $u_0=u_0(x):\mathbb{R}^3 \to \mathbb{R}^3$ is the given initial data.
For the representation of the viscosity, we adopt the horizontal Laplacian $\Deltah:=\nablah \cdot \nablah =\partial_{x_1}^2+\partial_{x_2}^2$ with $\nablah:=(\partial_{x_1},\partial_{x_2})$.
The central interest of the present manuscript is to consider the real analyticity of the global solutions to \eqref{eq:ANS} in the scaling critical anisotropic Besov spaces based on $L^p(\mathbb{R}^3)$. More precisely, we aim to prove that if the given initial data is small and analytic in $x_3$ only for horizontal component $u_{0,\rm h}=(u_{0,1},u_{0,2})$ of the initial velocity, then there exists a unique global solution that is real analytic in $t>0$ and $x \in \mathbb{R}^3$.
\\[3pt]
{\bf Known results related to our study.}
Before stating our main result precisely, we recall the history of the mathematical research on \eqref{eq:ANS}.
In the view point of the mathematical analysis for \eqref{eq:ANS}, the horizontal viscosity $-\Deltah u$ cannot control the first-order derivative with respect to $x_3$ in the nonlinear term.
It was initiated by Chemin et al. \cite{CDGG01} who first gave an answer to this question by using the energy method via $\partial_{x_3}u_3 = -\divh u_{\rm h}$ due to the divergence free condition, where $\divh:=\nablah \cdot$ is the horizontal divergence operator.
In \cites{CDGG01,If02}, they proved the local well-posedness for large data and global well-posedness for small data in the anisotropic Sobolev spaces $H^{0,s}(\mathbb{R}^3):= H^s(\mathbb{R}_{x_3};L^2(\mathbb{R}^2_{x_{\rm h}}))$ with $s > 1/2$.
Here, the condition $s>1/2$ is required since they used the fact that $H^s(\mathbb{R}_{x_3})$ is an algebra, whereas $s=1/2$ is the scaling-critical exponent.
However, since the embedding $H^{\frac{1}{2}}(\mathbb{R}_{x_3})$ is not an algebra, the solvability of \eqref{eq:ANS} in the critical space $H^{0,\frac{1}{2}}(\mathbb{R}^3)$ remains a difficult problem.
Paicu \cite{Pai05} focused on the end point case of the Sobolev embedding $\dB_{2,1}^{\frac{1}{2}}(\mathbb{R}_{x_3})\hookrightarrow L^{\infty}(\mathbb{R}_{x_3})$ and obtained the local well-posedness for large data and global well-posedness for small data of \eqref{eq:ANS} in the scaling critical anisotropic Besov space $\dot{\mathcal{B}}^{0,\frac{1}{2}}(\mathbb{R}^3):=\dB_{2,1}^{\frac{1}{2}}(\mathbb{R}_{x_3};L^2(\mathbb{R}^2_{x_{\rm h}}))$ with the norm
\begin{align}
    \n{f}_{\dot{\mathcal{B}}^{0,\frac{1}{2}}}
    :=
    \sum_{j \in \mathbb{Z}}
    2^{\frac{1}{2}j}
    \n{\Deltav_j f}_{L^2(\mathbb{R}^3)}<\infty
\end{align}
where $\{ \Deltav_j \}_{j \in \mathbb{Z}}$ is the Littlewood--Paley decomposition on the vertical line $\mathbb{R}_{x_3}$.
Later, this result was improved to the general $L^p$ framework by \cites{CZ07,ZF08}.
They considered the new anisotropic Besov norm 
\begin{align}
    \n{f}_{\dot{\mathcal{B}}_p^{s,\sigma}}
    :=
    \sum_{j \in\mathbb{Z}}
    2^{\sigma j}
    \sp{
    \sum_{k \geq j-1}
    2^{2sk}
    \n{\Deltahh_k\Deltav_j f}_{L^p_{\xh}L^2_{x_3}}^2
    }^{\frac{1}{2}}
    +
    \sum_{j \in \mathbb{Z}}
    2^{\sigma j}
    \n{\sum_{k \leq j-2}\Deltahh_k\Deltav_j f}_{L^2}
\end{align}
and proved that for any $2 \leq p \leq 4$ and small solenoidal initial data $a$ that is small in $\dot{\mathcal{B}}_p^{\frac{2}{p}-1,\frac{1}{2}}(\mathbb{R}^3)$, 
\eqref{eq:ANS} possesses a unique global solution.
Here $\{ \Deltahh_k \}_{k \in \mathbb{Z}}$ denotes the Littlewood--Paley decomposition on the horizontal plane $\mathbb{R}^2_{\xh}$.
We notice that their work extended the horizontal functional framework to the general $L^p(\mathbb{R}^2)$ Besov spaces in the frequency region $|\xi_3| \lesssim |\xih|$, while they still used full $L^2$ framework in the region $|\xi_3| \gtrsim |\xih|$ due to the circumstance that we need the $L^2$ energy argument in order to control the $x_3$-derivative loss of $u_3 \partial_{x_3} u_{\rm h}$ in the high frequency part of vertical direction.
Using the relation $\partial_{x_3}u_3 = -\divh u_{\rm h}$, we see that \eqref{eq:ANS} could be regarded as a linear equation for $u_3$. 
Zhang \cites{Z09,Z09-E} focused on this and showed the global well-posedness without smallness on $u_{0,3}$ in the sense that 
\begin{align}\label{small:1}
    \n{u_{0,\rm h}}_{\dot{\mathcal{B}}^{0,\frac{1}{2}}}
    \exp\lp{C\sp{\n{u_{0,3}}_{\dot{\mathcal{B}}^{0,\frac{1}{2}}}+1}^8}
    \ll 1.
\end{align}
By Paicu and Zhang \cite{PZ11}, this condition was improved as
\begin{align}\label{small:2}
    \n{u_{0,\rm h}}_{\dot{\mathcal{B}}^{0,\frac{1}{2}}}
    \exp\lp{C\n{u_{0,3}}_{\dot{\mathcal{B}}^{0,\frac{1}{2}}}^4}
    \ll 1
\end{align}
and they also considered the well-posedness in the framework of \cite{CZ07} under the condition $\n{u_{0,\rm h}}_{\dot{\mathcal{B}}_4^{-\frac{1}{2},\frac{1}{2}}}\exp\lp{C\n{u_{0,3}}_{\dot{\mathcal{B}}_4^{-\frac{1}{2},\frac{1}{2}}}^4}\ll 1$.
The first author \cite{F-pre} recently gave a further improvement of the condition \eqref{small:2} as
\begin{align}\label{small:3}
    \n{u_{0,\rm h}}_{\dot{\mathcal{B}}^{0,\frac{1}{2}}}
    \exp\lp{C\n{u_{0,3}}_{\dot{\mathcal{B}}^{0,\frac{1}{2}}}^2}
    \ll 1.
\end{align}
For the large time behavior of the global solutions, Ji et al. \cite{JWY21} proved that global small classical solutions to \eqref{eq:ANS} behave like the $2$D heat kernel in the $L^2(\R^3)$ framework. This result was improved by Xu and Zhang \cite{XZ22}, where they revealed the enhanced dissipation mechanism, that is, the horizontal components behave like the $2$D heat kernel while the vertical component behaves like the $3$D heat kernel. We refer to \cite{F23} for the extension to $L^p(\R^3)$ with $1\leq p \leq \infty$ and the asymptotic expansion of classical solutions.

On the analyticity of solutions, there are many literature on the isotropic Navier--Stokes equations:
\begin{align}
    \begin{cases}
        \partial_t u - \Delta u + (u \cdot \nabla) u + \nabla P = 0, \qquad & t>0,x \in \R^3, \\
        \div u = 0, & t \geq 0, x \in \R^3, \\
        u(0,x)=u_0(x), & x \in \R^3.
    \end{cases}
\end{align}
Foias and Temam \cite{FT-89} considered the $3$D periodic domain case and showed that the local solution for $H^1$-initial data is spatially analytic in the sense that $e^{tA^{1/2}}A^{1/2}u(t) \in L^2(\mathbb{T}^3)$ for all existence time $t>0$, where $A=-\mathbb{P}\Delta$ is the Stokes operator.
Later, this result was improved by \cites{HS-11,OT-00} and they proved the small global Fujita--Kato solution is analytic in $x$; for small data $u_0 \in \dH^{\frac{1}{2}}(\mathbb{R}^3)$, the global solution $u$ satisfies $e^{\sqrt{t}|\nabla|}u(t) \in C([0,\infty);\dH^{\frac{1}{2}}(\mathbb{R}^3)) \cap L^2(0,\infty;\dH^{\frac{3}{2}}(\mathbb{R}^3))$.
Bae et al. \cite{BBT-12} improved the functional framework to the scaling critical Besov spaces $\dB_{p,q}^{\frac{3}{p}-1}(\mathbb{R}^3)$ ($1 \leq p < \infty$, $1 \leq q \leq \infty$) and Nakasato \cite{N-pre} considered the end-point case $p=\infty$ by using the Fourier--Besov spaces. 
The analyticity of solutions to the anisotropic Navier--Stokes equations \eqref{eq:ANS} is more subtle due to the lack of vertical dissipation. 
Very recently, Liu and Zhang \cite{LZ24} used a new anisotropic Besov space $\fB^{s,\sigma}(\mathbb{R}^3)$ defined via the norm
\begin{align}
    \n{f}_{\fB^{s,\sigma}}
    :=
    \sum_{k,j \in \mathbb{Z}}
    2^{sk}2^{\sigma j}
    \n{\Deltahh_k\Deltav_j f}_{L^2(\mathbb{R}^3)},
\end{align}
and showed that the global solution is analytic in $x \in \mathbb{R}^3$ provided that the initial data is analytic in $x_3$ and small in $\fB^{0,\frac{1}{2}}(\mathbb{R}^3) \cap \fB^{-\alpha,\frac{1}{2}-\beta}(\mathbb{R}^3)$ with $\alpha,\beta \in (0,1)$ satisfying $\alpha+\beta > 1$.
More precisely, they showed that if the initial data satisfies
\begin{align}\label{anal-small-1}
    \n{e^{\rho_0|\partial_{x_3}|}u_0}_{\fB^{0,\frac{1}{2}}}
    +
    \n{e^{\rho_0|\partial_{x_3}|}u_0}_{\fB^{0,\frac{1}{2}}}^{1-\frac{1}{\alpha+\beta}}
    \n{e^{\rho_0|\partial_{x_3}|}u_0}_{\fB^{-\alpha,\frac{1}{2}-\beta}}^{\frac{1}{\alpha+\beta}}
    \ll 1
\end{align}
with some $\rho_0>0$,
then \eqref{eq:ANS} possesses a unique global solution $u$ satisfying\footnote{The tilde spaces are the Chemin--Lerner type spaces associated with the anisotropic Besov spaces. }
\begin{align}
    \n{e^{r\sqrt{t}|\nablah|+\frac{\rho_0}{2}|\partial_{x_3}|}u}_{\widetilde{L^{\infty}}(0,\infty;\fB^{0,\frac{1}{2}}) \cap \widetilde{L^2}(0,\infty;\fB^{1,\frac{1}{2}})}
    \leq
    C
    \n{e^{\rho_0|\partial_{x_3}|}u_0}_{\fB^{0,\frac{1}{2}}},
\end{align}
where $0<r<2\sqrt{1-\alpha}$.
Here, we remark that in contrast to the isotropic case, the assumption \eqref{anal-small-1} of the $x_3$-analyticity for the initial data was needed in their result due to the lack of dissipation in $x_3$.
Finally, we notice that Li et al. \cite{Li-Xu-Zha-pre} reported a paper on arXiv after the preparation of this paper. In there, it was shown that a unique small global solution to \eqref{eq:ANS} on a strip domain constructed and they investigated the Gevrey analyticity of the solution.

The aim of this paper is to improve the result of \cite{LZ24} in the following sense:
\begin{itemize}
    \item 
    We prove the analyticity not only for the space variables $x \in \mathbb{R}^3$ but also for the time variable $t>0$.
    \item 
    We relax the assumptions on the initial data in the sense that we remove {\it the analyticity in $x_3$} and {\it the smallness condition} from $u_{0,3}$ and assume them only for $u_{0,\rm h}$. 
    \item 
    We consider the above aims in the scaling critical $L^p(\mathbb{R}^3)$ framework by using a new anisotropic Besov space $\fB_p^{s,\sigma}(\mathbb{R}^3)$ defined\footnote{See Sec \ref{sec:LP} for the precise definition.} 
    via the norm
    \begin{align}
        \n{f}_{\fB^{s,\sigma}_p}
        :={}&
        \sum_{k,j \in \mathbb{Z}}
        2^{s k}
        2^{\sigma j}
        \n{\Deltahh_k \Deltav_j f}_{L^p(\R^3)}.
    \end{align}
\end{itemize}

\noindent 
{\bf The main result.}
Now, our main result reads as follows.
\begin{thm}\label{thm:anal}
Let $p$ and $\theta$ satisfy
\begin{align}\label{p-theta}
    1\leq p <3,\quad \max\Mp{0,2\sp{1-\f{2}{p}}} <\theta<\min\Mp{1,\f{2}{p}}.
\end{align} 
Then, there exist positive constants $\eta=\eta(p,\theta)$, $C=C(p,\theta)$, and $r=r(p,\theta)$ such that if
the initial datum $u_0=(u_{0,{\rm h}},u_{0,3})$, with $\div u_0=0$ and
\begin{align}
        &
        u_{0,{\rm h}}\in D^{\rm h}_{p,\theta}:= \fB_p^{\frac{2}{p}-1,\frac{1}{p}}(\mathbb{R}^3) 
        \cap 
        \fB_p^{\frac{2}{p}-1+\theta,\frac{1}{p}-\theta}(\mathbb{R}^3) 
        \cap 
        \fB_p^{\frac{2}{p}-2+\theta,\frac{1}{p}-\theta}(\mathbb{R}^3),\\ &
        u_{0,3} \in D^{\rm v}_{p,\theta}:=\fB_p^{\frac{2}{p}-1,\frac{1}{p}}(\mathbb{R}^3)  
        \cap 
        \fB_p^{\frac{2}{p}-2+\theta,\frac{1}{p}-\theta}(\mathbb{R}^3),
\end{align}
satisfies that only 
$u_{0,{\rm h}}$ is small and real analytic in $x_3 \in \mathbb{R}$ in the sense of
\begin{align}\label{anal-small-2}
    \sum_{\beta_3 \in \mathbb{N}\cup \{0\}}
    \frac{\rho_0^{\beta_3}}{\beta_3!}
    \n{\partial_{x_3}^{\beta_3}u_{0,{\rm h}}}_{D^{\rm h}_{p,\theta}}
    \leq
    \eta
    \exp
    \lp{-C\n{u_{0,3}}_{D^{\rm v}_{p,\theta}}}
\end{align}
with some positive constant $\rho_0$, \eqref{eq:ANS} admits a time-space analytic global solution $u=(\uh,u_{3})$ satisfying 
\begin{align}
    &   
    \sum_{
    \substack{\alpha \in \mathbb{N}\cup \{ 0 \} \\ \beta \in (\mathbb{N}\cup \{ 0 \})^3}
    }
    \frac{
    r^{\alpha+|\betah|}(\rho_0/2)^{\beta_3}  
    }
    {
    \alpha!\beta!
    }
    \n{
    t^{\alpha+\frac{|\beta_{\rm h}|}{2}}
    \partial_{t}^{\alpha}\partial_{x}^{\beta}
    u_{\rm h} 
    }_{S^{\rm h}_{p,\theta}} 
    \leq C\eta,  \\
    & 
    \sum_{
    \substack{\alpha \in \mathbb{N}\cup \{ 0 \} \\ \betah \in (\mathbb{N}\cup \{ 0 \})^2}
    }
    \frac{r^{\alpha+|\betah|}}{\alpha!\betah!}
    \n{
    t^{\alpha+\frac{|\beta_{\rm h}|}{2}}\partial_{t}^{\alpha}\partial_{\xh}^{\betah}
    u_3
    }_{S^{\rm v}_{p,\theta}}  
    \leq
    C
    \n{u_{0,3}}_{D^{\rm v}_{p,\theta}}+C\eta^2,
\end{align}
where the spaces $S^{\rm h}_{p,\theta}$ and $S^{\rm v}_{p,\theta}$ are defined by
\begin{align}
        S^{\rm h}_{p,\theta}
        :={}&
        \Mp{
        u_{\rm h} \in L^{\infty}\sp{0,\infty;D^{\rm h}_{p,\theta}}
        \ ;\ 
        \nablah^2 u_{\rm h} \in L^1\sp{0,\infty;D^{\rm h}_{p,\theta}}
        },
        \\
        S^{\rm v}_{p,\theta}
        :={}&
        \Mp{
        u_3 \in L^{\infty}\sp{0,\infty;D^{\rm v}_{p,\theta}}
        \ ;\ 
        \nablah^2 u_3 \in L^1\sp{0,\infty;D^{\rm v}_{p,\theta}}
        }.
\end{align}
Moreover, this analytic solution is unique in the case of $1 \leq p \leq 2$. 
\end{thm}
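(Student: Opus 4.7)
The plan is a two-stage argument. I first construct the (non-analytic) global solution $(\uh,u_3)$ in $S^{\rm h}_{p,\theta}\times S^{\rm v}_{p,\theta}$, and then upgrade to analyticity by controlling the entire tower of weighted derivatives $\{t^{\alpha+|\betah|/2}\p_t^\alpha\p_x^\beta u\}_{\alpha,\beta}$ simultaneously via a generating-function identity. The parabolic weight $t^{\alpha+|\betah|/2}$ encodes the fact that each horizontal or time derivative can be traded, via the horizontal heat kernel or the PDE itself, for a half-power of $t$; the fixed Taylor radius $\rho_0/2$ carries the vertical analyticity forward from the initial datum, since no dissipation is available to generate it dynamically.

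For the existence step, I would set up a Banach fixed-point iteration on $S^{\rm h}_{p,\theta}\times S^{\rm v}_{p,\theta}$ using the horizontal linear propagator $e^{t\Deltah}$. The key analytic tools are product and paraproduct estimates in the spaces $\fB_p^{s,\sigma}$, to be derived as preliminary lemmas through an anisotropic Bony decomposition combined with anisotropic Bernstein inequalities. The delicate nonlinear term is $u_3\p_{x_3}\uh$: the missing vertical dissipation would prevent closing the estimate naively, so I exploit $\p_{x_3}u_3=-\divh \uh$ to rewrite
\[
u_3\p_{x_3}\uh = \p_{x_3}(u_3\uh) + (\divh \uh)\,\uh
\]
and transfer the vertical derivative onto horizontal ones after integration by parts against the vertical Littlewood--Paley blocks. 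This yields a closed quadratic smallness estimate for $\uh$ and a linear-in-$u_3$ estimate with coefficients controlled by $\uh$, producing $\n{u_3}_{S^{\rm v}_{p,\theta}}\leq \exp(C\n{\uh}_{S^{\rm h}_{p,\theta}})\n{u_{0,3}}_{D^{\rm v}_{p,\theta}}$ by a Gr\"onwall-type argument; together these match the smallness condition \eqref{anal-small-2}.

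For the analyticity step, I differentiate the equation $\alpha$ times in $t$ and $\beta$ times in $x$, multiply by $t^{\alpha+|\betah|/2}$, and obtain the parabolic equation
\[
\p_t\sp{t^{\alpha+|\betah|/2}\p_t^\alpha\p_x^\beta \uh}-\Deltah\sp{t^{\alpha+|\betah|/2}\p_t^\alpha\p_x^\beta \uh}=F_{\alpha,\beta},
\]
where $F_{\alpha,\beta}$ comprises the Leibniz expansion of the nonlinearity plus a lower-order source from $\p_t(t^{\alpha+|\betah|/2})$. Applying the linear estimate in $S^{\rm h}_{p,\theta}$ and multiplying by the Taylor weights $r^{\alpha+|\betah|}(\rho_0/2)^{\beta_3}/(\alpha!\beta!)$, the binomial coefficients in Leibniz exactly cancel the factorials, so summation over $(\alpha,\beta)$ reassembles the nonlinear contributions as products of the generating functions
\[
\Phi^{\rm h}:=\sum_{\alpha,\beta}\frac{r^{\alpha+|\betah|}(\rho_0/2)^{\beta_3}}{\alpha!\beta!}\n{t^{\alpha+|\betah|/2}\p_t^\alpha\p_x^\beta \uh}_{S^{\rm h}_{p,\theta}},
\]
and analogously $\Phi^{\rm v}$. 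This yields a closed inequality of the form $\Phi^{\rm h}\leq C\Phi_0^{\rm h}+C(\Phi^{\rm h})^2+C\Phi^{\rm h}\Phi^{\rm v}$ and $\Phi^{\rm v}\leq \exp(C\Phi^{\rm h})\Phi_0^{\rm v}$, which closes by a standard continuity argument once $r$ is chosen sufficiently small.

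The hardest part will be the joint propagation of the three inequivalent analyticities: since the $x_3$-direction carries no dissipation, the Taylor radius there must be strictly smaller ($\rho_0/2<\rho_0$) than that of the initial datum, so that nonlinear interactions through $u_3\p_{x_3}\uh$ leave a summable remainder after each Leibniz step; verifying that this slack suffices is the core of the generating-function bound. One must also ensure that $u_3$ inherits $x_3$-analyticity although $u_{0,3}$ is not assumed analytic, which is achieved through the divergence-free reconstruction $u_3=-\int_{-\infty}^{x_3}\divh \uh\,dy_3$ that expresses vertical regularity of $u_3$ in terms of horizontal regularity of $\uh$. Finally, uniqueness in the range $1\leq p\leq 2$ is obtained by a weak-strong stability argument in a critical Besov space where the nonlinearity admits a duality estimate; for $p>2$ such a duality is unavailable, and the theorem accordingly asserts only existence.
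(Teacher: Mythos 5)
Your outline correctly identifies the key structural idea (weight $t^{\alpha+|\beta_{\rm h}|/2}$, Taylor coefficients as a generating function, closing via a quadratic inequality), but the central technical mechanism that makes the argument close is missing, and the substitute you propose fails.

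The crux is the term $u_3\partial_{x_3}u_{\rm h}$. After applying $\partial_{x_3}^{\beta_3}$, dividing by $\beta_3!$, and distributing the $\rho_0^{\beta_3}$-weight, the Leibniz expansion does \emph{not} reassemble into a clean product of two generating series. The term in which all $\beta_3$ vertical derivatives land on $u_{\rm h}$ becomes
\begin{align}
    u_3\,\frac{\partial_{x_3}^{\beta_3+1}u_{\rm h}}{\beta_3!}
    = (\beta_3+1)\,u_3\,\frac{\partial_{x_3}^{\beta_3+1}u_{\rm h}}{(\beta_3+1)!},
\end{align}
so it carries an extra linear factor $O(\beta_3)$ relative to the weight needed to match the target norm at level $\beta_3+1$; the other Leibniz terms produce analogous $O(\beta_3)$ coefficients after the divergence-free substitution $\partial_{x_3}u_3=-\divh u_{\rm h}$. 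A fixed geometric shrinkage $\rho_0\mapsto\rho_0/2$ does \emph{not} neutralize these factors in a way that lets the series be reabsorbed into the norm being estimated, because the estimate compares the generating function at level $\beta_3$ against itself, not against a single free sum over $\beta_3$. This is precisely why the paper introduces a solution-dependent, time-varying vertical analyticity radius $\rho_0 e^{-\lambda f_M(t)}$ via an auxiliary ODE: multiplying by $e^{-\lambda\beta_3 f_M(t)}$ introduces a friction term $\lambda\beta_3 f_M'(t)\,u^{\alpha,\beta}_{{\rm h}}$ in the linearized equation, and the resulting additional dissipated quantity $\beta_3\|u^{\alpha,\beta}_{{\rm h}}\|_{L^1_{f_M'}}$ on the left-hand side absorbs the $O(\beta_3)$ loss on the right once $\lambda$ is large. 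The factor $\rho_0/2$ in the conclusion is an \emph{output} of the estimate $\sup_t \lambda f_M(t)\leq\log 2$, not an input ansatz. Without this mechanism your closing inequalities for $\Phi^{\rm h}$ do not close.

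Your proposed rewriting $u_3\partial_{x_3}u_{\rm h}=\partial_{x_3}(u_3u_{\rm h})+(\divh u_{\rm h})u_{\rm h}$ makes matters worse rather than better: $\partial_{x_3}^{\beta_3}$ applied to $\partial_{x_3}(u_3u_{\rm h})$ produces $\partial_{x_3}^{\beta_3'}u_3$ with $\beta_3'\geq 1$, and the theorem deliberately assumes no $x_3$-analyticity of $u_{0,3}$, so those quantities are not controllable by the hypotheses. The paper instead isolates the single term with zero vertical derivatives on $u_3$ (namely $u_3\partial_{x_3}\partial_{t,x}^{\alpha,\beta}u_{\rm h}$) and uses the $x_3$-hyperbolic friction to handle it, while replacing $\partial_{x_3}^{\beta_3'}u_3$ by $-\partial_{x_3}^{\beta_3'-1}\divh u_{\rm h}$ in all other terms.

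Two further gaps. First, the pressure: because horizontal and vertical components are estimated in different spaces, $\nabla P$ cannot be eliminated as in the $L^2$ or Liu--Zhang setting; the term $\nablah\partial_{x_3}(-\Delta)^{-1}(\divh u_{\rm h}\,\divh u_{\rm h})$-type contributions exceed one horizontal derivative after the Leibniz shifts, and controlling them forces the auxiliary supercritical spaces $\fB_p^{2/p-1+\theta,1/p-\theta}$, $\fB_p^{2/p-2+\theta,1/p-\theta}$. Your proposal is silent on this, but it is a genuine obstruction, not routine bookkeeping. Second, your two-stage plan (first build the non-analytic solution in $S^{\rm h}_{p,\theta}\times S^{\rm v}_{p,\theta}$, then bootstrap to analyticity) presupposes an $L^p$ well-posedness theory for the anisotropic system that did not previously exist for $1\leq p<3$; the paper builds existence and analyticity simultaneously via a frequency-truncated approximation with uniform analytic a~priori bounds and a compactness argument, precisely because the analytic norm is what makes the estimate close. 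Finally, a minor point: the case $(\alpha,|\beta_{\rm h}|)=(0,1)$ carries a singular weight $t^{-1/2}$ at $t=0$ in the commutator with $\partial_t(t^{\alpha+|\beta_{\rm h}|/2})$; the paper handles it by working directly from the Duhamel formula and splitting the time integral, and your proposal must address this or the linear estimate is not valid at that index.
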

\begin{rem}
    Let us give some comments on Theorem \ref{thm:anal}.
    \begin{enumerate}
        \item 
        We first mention on our functional framework.
        The function spaces $\fB_p^{\frac{2}{p}-1,\frac{1}{p}}(\mathbb{R}^3) 
        $ and $
        \fB_p^{\frac{2}{p}-1+\theta,\frac{1}{p}-\theta}(\mathbb{R}^3) 
        $
        are the scaling critical, whereas the space
        $
        \fB_p^{\frac{2}{p}-2+\theta,\frac{1}{p}-\theta}(\mathbb{R}^3)$
        is a supercritical auxiliary space.
        For this auxiliary space,
        putting $\alpha=1-\theta$ and $\beta=\theta$, we see that our setting is the limiting case $\alpha + \beta = 1$ in the initial condition \eqref{anal-small-1} proposed by \cite{LZ24}, which implies we relax the additional low frequency regularity condition on the initial data.
        \item 
        Let us compare our framework with the well-posedness on the $L^p$ framework by \cites{CZ07,ZF08}.
        As stated above, their results need $L^2(\mathbb{R}_{x_3})$ in the vertical direction and also assumed $L^2(\mathbb{R}^2_{\xh})$ when the horizontal frequencies are higher than the vertical frequency. These $L^2$ restrictions were required for controlling the derivative loss of $\partial_{x_3}$ in the nonlinear term.
        Meanwhile, our result is in the framework of anisotropic Besov spaces based on $L^p(\mathbb{R}^3)$ framework. We emphasize that our paper is the first paper to achieve $L^p$ for all variables in the mathematical history of the anisotropic Navier--Stokes equations.
        The reason why we are able to treat in the full $L^p(\mathbb{R}^3)$ framework is that we do not need to handle the derivative loss appearing in $u_3 \partial_{x_3} u_{\rm h}$ since we consider the analytic framework and differentiate the solution infinitely many times.
        \item 
        In comparison with the initial condition of \cite{LZ24}, that is \eqref{anal-small-1}, our result does not assumed the analyticity of $u_{0,3}$ for $x_3$-direction. Moreover, our condition \eqref{anal-small-2} does not require the smallness of $u_{0,3}$. This condition follows the genealogy of \cites{Z09,Z09-E,PZ11,F-pre} and provides one of the best form in the sense that the power $k$ of the norm for $u_{0,3}$ is $1$ whereas \cites{Z09,Z09-E}, \cite{PZ11}, and \cite{F-pre} treated $k=8$, $k=4$, and $k=2$, respectively; see \eqref{small:1}, \eqref{small:2}, and \eqref{small:3}.
        \item 
        We may obtain the analyticity of $u_3$ in $x_3$-direction immediately from Theorem \ref{thm:anal}. 
        Indeed, using $\partial_{x_3}u_3 = - \divh u_{\rm h}$, we have 
        \begin{align}
            &\sum_{
            \substack{\alpha \in \mathbb{N}\cup \{ 0 \} \\ \beta \in (\mathbb{N}\cup \{ 0 \})^3}
            }
            \frac{
            r^{\alpha+|\betah|}(\rho_0/2)^{\beta_3} 
            }
            {
            \alpha!\beta!
            }
            \n{t^{\alpha+\frac{|\beta_{\rm h}|}
            {2}}
            \partial_{t}^{\alpha}\partial_x^{\beta}
            u_3}_{\widetilde{S}^{\rm h}_{p,\theta}}  
            \\
            &\quad =
            \sum_{
            \substack{\alpha \in \mathbb{N}\cup \{ 0 \} \\ \beta \in (\mathbb{N}\cup \{ 0 \})^3}
            }
            \frac{
            r^{\alpha+|\betah|}(\rho_0/2)^{\beta_3} 
            }
            {
            \alpha!\beta!
            }
            \n{
            t^{\alpha+\frac{|\beta_{\rm h}|}{2}}
          \partial_{t}^{\alpha}     
            \partial_x^{\beta}
            \frac{\nablah}{|\nablah|} \cdot u_{\rm h}
            }_{S^{\rm h}_{p,\theta}}    \\
            &\quad 
            \leq
            C
            \sum_{
            \substack{\alpha \in \mathbb{N}\cup \{ 0 \} \\ \beta \in (\mathbb{N}\cup \{ 0 \})^3}
            }
            \frac{
            r^{\alpha+|\betah|}(\rho_0/2)^{\beta_3}  
            }
            {
            \alpha!\beta!
            }
            \n{
            t^{\alpha+\frac{|\beta_{\rm h}|}{2}}
            \partial_{t}^{\alpha}
            \partial_{x}^{\beta} u_{\rm h}
            }_{S^{\rm h}_{p,\theta}}   \leq C \eta,
        \end{align}
        where we have defined the semi-norm $\n{z}_{\widetilde{S}_{p,\theta}^{\rm h}}:=\n{|\nablah|^{-1}\partial_{x_3}z}_{S_{p,\theta}^{\rm h}}$.
    \item
    As is well-known for the research of analytic solutions to the Euler equations, the $x_3$-hyperbolic nature of the nonlinear term $u_3 \partial_{x_3}u_{\rm h}$ affects the vertical radius of the convergence and it shrinks for $t>0$. 
    Indeed, our radius of convergence for vertical direction is given by $\rho_0/2$. 
    The behavior of the loosing radius of the vertical analyticity for $t>0$ may be described more precisely; from our method to obtain the global analytic a priori estimates, we see that the vertical radius of the convergence is given by $\rho_0e^{-\lambda f(t)}$, where $\lambda>0$ is an absolute constant and   
    \begin{align}\label{eq:f}
    \begin{split}
    f(t) 
    ={}&
    \sum_{
    \substack{\alpha \in \mathbb{N}\cup \{ 0 \} \\ \betah \in (\mathbb{N}\cup \{ 0 \})^2}
    }
    \frac{r^{\alpha+|\betah|}}{\alpha!\betah!}
    \int_0^t \n{t^{\alpha+\frac{|\betah|}{2}}\partial_{t}^{\alpha}\partial_{\xh}^{\betah}u_3(\tau)}_{\fB_p^{\frac{2}{p},\frac{1}{p}}} d\tau 
    \\
    &
    +
    \sum_{
    \substack{\alpha \in \mathbb{N}\cup \{ 0 \} \\ \beta \in (\mathbb{N}\cup \{ 0 \})^3}
    }
    \frac{r^{\alpha+|\betah|}\rho_0^{\beta_3}}{\alpha!\beta!}
    \int_0^t
    \n{t^{\alpha+\frac{|\betah|}{2}}\partial_{t}^{\alpha}\partial_{x}^{\beta}u_{\rm h}(\tau)}_{\fB_p^{\frac{2}{p}+1,\frac{1}{p}}}
    e^{-   \lambda  \beta_3 f(\tau)} 
    d\tau
    \end{split}
    \end{align}
    with $f(0)=0$, if \eqref{eq:f} admits a solution.
    \end{enumerate}
\end{rem}

\noindent
{\bf Structure of this paper and the strategy of the proof of main result.}
This paper is arranged as follows. In Section \ref{sec:LP}, we recall some basic facts from anisotropic Littlewood--Paley theory. Section \ref{sec:glo-est} is devoted to the global a priori estimates, which is the key of our analysis. Finally, we present the proof of our main result in Section \ref{sec:pf}.

Next, let us briefly describe the strategy of the proof of Theorem \ref{thm:anal}, especially focusing on the difficulty appearing in the steps.
In the previous study, 
Liu--Zhang \cite{LZ24} 
introduce the analytic operator $e^{\sqrt{t}|\nablah|+\frac{\rho_0}{2}|\partial_{x_3}|}$ and show the analyticity of the small global solutions to \eqref{eq:ANS} as mentioned above.
In contrast to this, we directly calculate the higher derivative of the solutions, since we need more detailed analysis of the difference between the anisotropic properties of $u_{\rm h}$ and $u_3$.
Therefore, our central problem is how to converge the power series of the solution while facing the problem of derivative loss in the $x_3$-direction. 
To proceed, we encounter three main difficulties, which we explain step by step.
For the sake of simple presentation, 
we consider the case of $\alpha=0$ and $\betah=(\beta_1,\beta_2)=(0,0)$ and also set $\rho_0=1$.
Differentiating the equation of $\uh$ by $x_3$ for $\beta_3$-times, we see that
\begin{align}\label{x_3-eq}
  \p_t \frac{\p_{x_3}^{\beta_3} \uh}{\beta_3!} 
  &
  - \Deltah \frac{\p_{x_3}^{\beta_3} \uh}{\beta_3!}
  + \frac{1}{\beta_3!}
  \p_{x_3} ^{\beta_3} \sp{  \sp{\uh\cdot \nablah} \uh }
  + 
  \frac{1}{\beta_3!}
  \p_{x_3} ^{\beta_3} \sp{  u_3\partial_{x_3} \uh }
  +
  \nablah \frac{\p_{x_3} ^{\beta_3} p}{\beta_3!}
  =0. 
\end{align}
From the maximal regularity of the horizontal heat equations, we have 
\begin{align}
    &
    \frac{1}{\beta_3!}
    \n{\p_{x_3} ^{\beta_3} \uh}_{{L^{\infty}_T}(\fB_p^{\frac{2}{p}-1,\frac{1}{p}}) \cap L^1_T(\fB_p^{\frac{2}{p}+1,\frac{1}{p}})}
    \\
    &\quad 
    \leq
    \frac{C}{\beta_3!}
    \n{\partial_{x_3}^{\beta_3}u_0}_{\fB_p^{\frac{2}{p}-1,\frac{1}{p}}}
    +
    \frac{C}{\beta_3!}
    \n{\partial_{x_3}^{\beta_3}\sp{  \sp{\uh\cdot \nablah} \uh }}
    _{ L^1_{T} ( \fB_p^{\frac{2}{p}-1,\frac{1}{p}} )
    }\label{intro:max-2}
    \\
    &\qquad
    +
    \frac{C}{\beta_3!}
    \n{\partial_{x_3}^{\beta_3}\sp{  u_3\partial_{x_3} \uh }}_{L^1_T(\fB_p^{\frac{2}{p}-1,\frac{1}{p}})}
    +
    \frac{C}{\beta_3!}
    \n{\nablah \partial_{x_3}^{\beta_3} p}_{L^1_T(\fB_p^{\frac{2}{p}-1,\frac{1}{p}})}
\end{align}
Then, the second nonlinear term is written as 
\begin{align}\label{dif:1}
    \begin{aligned}
    \frac{1}{\beta_3!}
    \partial_{x_3}^{\beta_3}
    (u_3 \partial_{x_3}^{\beta_3}\uh)
    ={}&
    \beta_3
    u_3 \frac{\partial_{x_3}^{\beta_3+1}u_{\rm h}}{(\beta_3+1)!}\\
    &
    -
    \sum_{\beta_3=\beta_3'+\beta_3'', \beta_3' \geq 1}
    \frac{\beta_3''+1}{\beta_3'}
    \frac{\divh \partial_{x_3}^{\beta_3'-1}u_{\rm h}}{(\beta_3'-1)!}
    \frac{\partial_{x_3}^{\beta_3''+1}u_{\rm h}}{(\beta_3''+1)!}.
    \end{aligned}
\end{align}
All coefficients on the right-hand side are of order $O(\beta_3)$.
To control this $\beta_3$, we consider following ordinary differential equation\footnote{This is a simpler version that only considers the vertical analyticity. Considering the analyticity in $t$ and $x$, we use more general and complicated one; see \eqref{ODE}.}:
\begin{align}\label{eq:simple-f}
    f'(t)= \n{u_3(t)}_{\fB_p^{\frac{2}{p},\frac{1}{p}}} + \sum_{\beta_3 \in \mathbb{N} \cup \{ 0 \}} \frac{1}{\beta_3!} \n{\partial_{x_3}^{\beta_3}u_{\rm h}(t)}_{\fB_p^{\frac{2}{p}+1,\frac{1}{p}}}e^{-\lambda\beta_3f(t)}, \quad t>0
\end{align}
with the initial condition $f(0)=0$.
We may overcome the trouble by multiplying the equation by $e^{-\lambda \beta_3 f(t)}$.
Then, we modify the maximal regularity \eqref{intro:max-1} by the estimate for $\partial_{x_3}^{\beta_3}\uh(t)e^{-\lambda \beta_3f(t)}/\beta_3!$ as 
\begin{align}
    &
    \frac{1}{\beta_3!}
    \n{\p_{x_3} ^{\beta_3} \uh e^{-\lambda\beta_3f(t)}}_{{L^{\infty}_T}(\fB_p^{\frac{2}{p}-1,\frac{1}{p}}) \cap L^1_T(\fB_p^{\frac{2}{p}+1,\frac{1}{p}})}\\
    &\qquad
    +
    \frac{1}{\beta_3!}
    \int_0^T
    \n{
    \partial_{x_3}^{\beta_3}u_{\rm h}(t)
    e^{-\lambda\beta_3f(t)}
    }_{\fB_p^{\frac{2}{p}-1,\frac{1}{p}}}
    \lambda \beta_3 f'(t)
    dt\\
    &\quad 
    \leq
    \frac{C}{\beta_3!}
    \n{\partial_{x_3}^{\beta_3}u_0}_{
    \fB_p^{\frac{2}{p}-1,\frac{1}{p}}
    }
    +
    \frac{C}{\beta_3!}
    \n{\partial_{x_3}^{\beta_3}\sp{  \sp{\uh\cdot \nablah} \uh }e^{-\lambda\beta_3f(t)}}_{ 
    L^1_T( \fB_p^{\frac{2}{p}-1,\frac{1}{p}} )
    }
    \label{intro:max-1}
    \\
    &\qquad
    +
    \frac{C}{\beta_3!}
    \n{\partial_{x_3}^{\beta_3}\sp{  u_3\partial_{x_3} \uh }e^{-\lambda\beta_3f(t)}}_{L^1_T(\fB_p^{\frac{2}{p}-1,\frac{1}{p}})}\\
    &\qquad
    +
    \frac{C}{\beta_3!}
    \n{\nablah \partial_{x_3}^{\beta_3} pe^{-\lambda\beta_3f(t)}}_{L^1_T(\fB_p^{\frac{2}{p}-1,\frac{1}{p}})}.
\end{align}
Then, for the problematic nonlinear estimate, using product estimate 
\begin{align}
    &
    \n{u_3\partial_{x_3}^{\beta_3+1}u_{\rm h}}_{\fB_p^{\frac{2}{p}-1,\frac{1}{p}}}
    \leq
    C
    \n{u_3}_{\fB_p^{\frac{2}{p},\frac{1}{p}}}
    \n{\partial_{x_3}^{\beta_3+1}u_{\rm h}}_{\fB_p^{\frac{2}{p}-1,\frac{1}{p}}},\\
    &
    \n{\divh \partial_{x_3}^{\beta_3'-1}u_{\rm h}
    \partial_{x_3}^{\beta_3''+1}u_{\rm h}}_{\fB_p^{\frac{2}{p}-1,\frac{1}{p}}}
    \leq
    C
    \n{\partial_{x_3}^{\beta_3'-1}u_{\rm h}}_{\fB_p^{\frac{2}{p}+1,\frac{1}{p}}}
    \n{\partial_{x_3}^{\beta_3''+1}u_{\rm h}}_{\fB_p^{\frac{2}{p}-1,\frac{1}{p}}},
\end{align}
we see that 
\begin{align}
    &\sum_{\beta_3 \in \mathbb{N} \cup \{ 0 \}}
    \frac{1}{\beta_3!}
    \n{\partial_{x_3}^{\beta_3}
    (u_3 \partial_{x_3}^{\beta_3}\uh)e^{-\lambda \beta_3f(t)}}_{L^1(0,T;\fB_p^{\frac{2}{p}-1})}
    \\
    &
    \quad
    \leq
    \frac{C}{\lambda}
    \sum_{\beta_3 \in \mathbb{N} \cup \{ 0 \}}
    \frac{1}{\beta_3!}
    \int_0^T
    \n{\partial_{x_3}^{\beta_3}u_{\rm h}(t)}_{\fB_p^{\frac{2}{p}-1,\frac{1}{p}}}
    e^{-\lambda\beta_3f(t)}
    \lambda \beta_3 f'(t)
    dt. 
\end{align}
The right-hand side above may be absorbed in the $\beta_3$-summation of \eqref{intro:max-1}, by choosing $\lambda$ sufficiently large.
We remark that the first term of the right hand side of \eqref{eq:simple-f} is $-1$th super-critical derivative exponent for horizontal direction.
This term is necessary for the estimate of the first term of the right hand side of \eqref{dif:1} as this nonlinear term does not have the horizontal derivative.\footnote{The other nonlinear terms possesses the first-order horizontal derivatives.}
When controlling this term in $L^1(0,T)$, we meet the super-critical regularity $\fB_p^{2/p-2+\theta,1/p-\theta}(\mathbb{R}^3)$.

The next difficulty appears in estimating the pressure term. 
We remark that the previous study \cite{LZ24} did not need to treat it by using the elimination via the divergence free condition whereas we have to estimate the pressure precisely since our result requires separate calculations for the horizontal and vertical directions of the velocity field.
We may write the horizontal pressure as 
\begin{align}
    \nablah P
    &=
    \sum_{\ell,m=1}^2
    \nablah \partial_{x_{\ell}}\partial_{x_m}(-\Delta)^{-1}
    (u_{\ell}u_m)
    +
    2
    \nablah \partial_{x_3}(-\Delta)^{-1}
    (u_{\rm h}\cdot \nablah u_3)\\
    &
    =:
    \nablah P_1 + \nablah P_2
\end{align}
and $P_2$ is the problematic one.
Indeed, considering the case of $(\alpha,\betah)=(0,0)$ and $\rho_0=1$, we have 
\begin{align}
    \frac{1}{\beta_3!}
    \nablah \partial_{x_3}^{\beta_3}P_2
    ={}&
    \frac{2}{\beta_3!} 
    \nablah\partial_{x_3}(-\Delta)^{-1} 
    \partial_{x_3}^{\beta_3 } u_{\rm h} \cdot  \nablah u_3
    \\
    &
    -
    \sum_{\beta_3'+\beta_3''=\beta_3, \beta_3'' \geq 1  } 
    \frac{2}{\beta_3'!\beta_3''!}
    \nablah \divh (-\Delta)^{-1}
    \partial_{x_3}
    \sp{\partial_{x_3}^{\beta_3'}u_{\rm h}
    \divh \partial_{x_3}^{\beta_3''-1}u_{\rm h}}
    \\
    &
    +
    \sum_{\beta_3'+\beta_3''=\beta_3, \beta_3'' \geq 1  } 
    \frac{2}{\beta_3'!\beta_3''!}
    \nablah\partial_{x_3}(-\Delta)^{-1}
    \sp{\divh \partial_{x_3}^{\beta_3'} u_{\rm h} 
    \divh \partial_{x_3}^{\beta_3''-1}u_{\rm h}}.
\end{align}
Here the last line above is the most harmful. 
To estimate this problematic one by the norm of the critical space $\fB_p^{\frac{2}{p}-1,\frac{1}{p}}(\mathbb{R}^3)$, it is natural to meet 
\begin{align}
    \n{\divh \partial_{x_3}^{\beta_3'} u_{\rm h} 
    \divh \partial_{x_3}^{\beta_3''-1}u_{\rm h}}_{\fB_p^{\frac{2}{p}-1,\frac{1}{p}}}.
\end{align}
However, it seems impossible to bound this by suitable norms since the order of the horizontal derivative in it is in excess of one.
We overcome this by adjusting the order of derivatives using the structure of the Riesz transform $\nablah\partial_{x_3}(-\Delta)^{-1}$. We notice that this step requires us the auxiliary regularities $\fB_p^{2/p-1+\theta,1/p-\theta}(\mathbb{R}^3) \cap \fB_p^{2/p-2+\theta,1/p-\theta}(\mathbb{R}^3)$.

The final obstacle comes from the polynomial weight of $t$ for the case of $(\alpha,\betah) \neq (0,0)$. 
In this paragraph, we only consider the case of $\beta_3=0$ for the sake of simplicity.
A direct calculation gives
\begin{align}
    &
    \p_t \frac{r^{\alpha+|\betah|} t^{\alpha+\f{|\betah|}{2}} }{\alpha!\betah!}
    \partial_{t,\xh}^{\alpha,\betah}
    u
    - 
    \Deltah  \frac{r^{\alpha+|\betah|} t^{\alpha+\f{|\betah|}{2}} }{\alpha!\betah!}
    \partial_{t,\xh}^{\alpha,\betah}
    u
    +
    \frac{r^{\alpha+|\betah|} t^{\alpha+\f{|\betah|}{2}} }{\alpha!\betah!}
    \partial_{t,\xh}^{\alpha,\betah}
    [(u\cdot \nabla)u]
    \\
    &\quad
    +
    \frac{r^{\alpha+|\betah|} t^{\alpha+\f{|\betah|}{2}} }{\alpha!\betah!}
    \partial_{t,\xh}^{\alpha,\betah}
    \nabla P
    =
    \sp{ \alpha+\f{|\betah|}{2} }
    \frac{r^{\alpha+|\betah|} t^{\alpha+\f{|\betah|}{2}-1} }{\alpha!\betah!}
    \partial_{t,\xh}^{\alpha,\betah}
    u. \label{diff-wei-1}
\end{align}
We focus on the new term on the right-hand side of \eqref{diff-wei-1}.
This new term can be controlled by following the method provided by Iwabuchi \cite{Iwa-20} for the analyticity of the critical Burgers equation.
However, it seems difficult to treat the case of $\alpha=0$ and $|\betah|=1$ since the time weight $t^{\alpha+|\betah|/2-1}=t^{-1/2}$ in the right handside of \eqref{diff-wei-1} has a singularity at $t=0$.
To circumvent this, we do not use \eqref{diff-wei-1} but consider the Duhamel form 
\begin{align}
    rt^{\frac{1}{2}}
    \partial_{\xh}^{\betah}
    u(t)
    =
    rt^{\frac{1}{2}}
    \partial_{\xh}^{\betah}
    e^{t \Deltah}
    u_0
    -
    rt^{\frac{1}{2}}
    \partial_{\xh}^{\betah}
    \int_0^t 
    e^{(t-\tau)\Deltah}
    \lp{(u\cdot \nabla)u+\nabla P}(\tau)
    d\tau
\end{align}
and apply the standard argument for the decay estimates, that is, we decompose the time integral of the Duhamel term as $\int_0^t = \int_0^{t/2} +\int_{t/2}^t$ and estimate each integral by appropriate calculations.

Now, we overcome three difficulties as mentioned above and obtain the global analytic a priori estimates of the solutions to \eqref{eq:ANS}.
To complete the proof, we make use of the a priori estimate to provide the uniform boundedness of the frequency cut-off approximation system for \eqref{eq:ANS} and finally obtain the global analytic solutions by the compactness argument.

\section{Anisotropic Littlewood--Paley theory}\label{sec:LP}
In this section, we establish the anisotropic Littlewood--Paley theory. To this end, we first define the Littlewood--Paley decomposition. 
Let $\phi \in C_c^{\infty}([0,\infty))$ satisfy $0 \leq \phi(r) \leq1$ on $[0,\infty)$, $\supp \phi \subset \lp{2^{-1},2}$, and  
\begin{align}\label{phi}
    \sum_{j \in \mathbb{Z}} \phi(2^{-j}r) = 1
\end{align}
for every fixed $r>0$.
Let $\phih_k(\xih):=\phi(2^{-k}|\xih|)$ and $\phiv_j(\xi_3):=\phi(2^{-j}|\xi_3|)$ for $\xih \in \mathbb{R}^2$, $\xi_3 \in \mathbb{R}$, and $k,j \in \mathbb{Z}$.
We define the anisotropic homogeneous Littlewood--Paley frequency localization operators as 
\begin{align}
    \Deltahh_k f := \mathscr{F}^{-1}_{\mathbb{R}^2}\lp{\phih_k(\xih)\mathscr{F}_{\mathbb{R}^2}[f](\xih)},\quad
    \Deltav_j f := \mathscr{F}^{-1}_{\mathbb{R}}\lp{\phiv_j(\xi_3)\mathscr{F}_{\mathbb{R}}[f](\xi_3)}
\end{align}
for all $k,j \in \mathbb{Z}$.
We also define the anisotropic inhomogeneous Littlewood--Paley decomposition as 
\begin{align}
    &\iDh_k f
    :=
    \begin{cases}
        \Deltahh_k f, & (k \geq 1), 
        \\
        \mathscr{F}^{-1}_{\mathbb{R}^2}\lp{\widetilde{\phi}(|\xih|)\mathscr{F}_{\mathbb{R}^2}[f](\xih)}, & (k=0),
    \end{cases}
    \\
    &\iDv_j f
    :=
    \begin{cases}
        \Deltav_j f, & (j \geq 1), 
        \\
        \mathscr{F}^{-1}_{\mathbb{R}}\lp{\widetilde{\phi}(|\xi_3|)\mathscr{F}_{\mathbb{R}}[f](\xih)}, & (j=0)
    \end{cases}
\end{align}
for all $k,j \in \mathbb{N}\cup \{0\}$, where we have set 
\begin{align}
    \widetilde{\phi}(r):= 1 - \sum_{j \geq 1}\phi(2^{-j}r).
\end{align}

Now we introduce the definition of anisotropic homogeneous and inhomogeneous Besov spaces. Observe that the anisotropic homogeneous Besov spaces was also defined in Chemin et al. \cite{CPZ14}.  
\begin{df}\label{def:Besov}
    Let $s,\sigma \in \mathbb{R}$ and $1\leq p \leq \infty$.
    \begin{enumerate}
    \item 
    The anisotropic homogeneous Besov space $\fB^{s,\sigma}_p(\mathbb{R}^3)$ as  
    \begin{align}
        \fB_p^{s,\sigma}(\mathbb{R}^3)
        :={}&
        \Mp{
        f \in \mathscr{S}'(\mathbb{R}^3)/\mathscr{P}(\mathbb{R}^3)\ ;\ \n{f}_{\fB_p^{s,\sigma}}<\infty},
        \\
        \n{f}_{\fB^{s,\sigma}_p}
        :={}&
        \sum_{k,j \in \mathbb{Z}}
        2^{s k}
        2^{\sigma j}
        \n{\Deltahh_k \Deltav_j f}_{L^p(\R^3)}.
    \end{align}
    Here, $\mathscr{P}(\mathbb{R}^3)$ is the set of all polynomials on $\mathbb{R}^3$ that is given by the product of polynomials on $\mathbb{R}^2_{\xh}$ and polynomials on $\mathbb{R}_{x_3}$.
    \item 
    The anisotropic inhomogeneous Besov space $\ifB^{s,\sigma}_p(\mathbb{R}^3)$ as  
    \begin{align}
        \ifB_p^{s,\sigma}(\mathbb{R}^3)
        :={}&
        \Mp{
        f \in \mathscr{S}'(\mathbb{R}^3)\ ;\ \n{f}_{\ifB_p^{s,\sigma}}<\infty},
        \\
        \n{f}_{\ifB_p^{s,\sigma}}
        :={}&
        \sum_{k,j \in \mathbb{N}\cup \{ 0\}}
        2^{s k}
        2^{\sigma j}
        \n{\iDh_k \iDv_j f}_{L^p(\R^3)}.
    \end{align}
    \end{enumerate}
\end{df}


For any $s,\sigma \in \R,1\leq p,r \leq \infty$ and $f \in L^1(I;[0,\infty])$, 
we define the weighted norm as 
\begin{align}
    \| F\|_{L^1_f(I;\fB^{s,\sigma}_{p})} 
    :={}& 
    \n{\n{F(t)}_{\fB_p^{s,\sigma}}}_{L^1(I;f(t)dt)}
    \\
    ={}&
    \sum_{k,j \in \mathbb{Z}}
    2^{s j}
    2^{\sigma k} 
    \int_{I} 
    \n{\Deltahh_j \Deltav_k F(t)}_{L^p(\R^3)} f(t)dt
\end{align}
for all $F \in {L^{\infty}}(I;\fB_p^{s,\sigma}(\mathbb{R}^3))$.

Now we prepare some useful lemmas. 
We begin with the product estimates in the homogeneous anisotropic Besov spaces.
\begin{lemm}\label{lemm:prod}
    Let $1 \leq p < \infty$. 
    Let $s_0,s_1,s_2,s_3,s_4,\sigma,\sigma_1,\sigma_2 \in \mathbb{R}$ satisfy
    \begin{gather}
        s_0=s_1+s_2=s_3+s_4 > \max \Mp{0, 2\sp{\frac{2}{p}-1}},
        \quad
        s_1,s_2,s_3,s_4 \leq \frac{2}{p}, \\
        \sigma > - \min \Mp{\frac{1}{p},1-\frac{1}{p}},
        \quad
        \sigma_1,\sigma_2 \geq 0.
    \end{gather}
    Then, there exists a positive constant $C=C(s_0,s_1,s_2,s_3,s_4,\sigma,\sigma_1,\sigma_2,p)$ such that 
    \begin{align}
        \n{fg}_{\fB_p^{s_0-\frac{2}{p},\sigma}}
        \leq
        C
        \n{f}_{\fB_p^{s_1,\frac{1}{p}-\sigma_1}}
        \n{g}_{\fB_p^{s_2,\sigma+\sigma_1}}
        +
        C
        \n{f}_{\fB_p^{s_3,\sigma+\sigma_2}}
        \n{g}_{\fB_p^{s_4,\frac{1}{p}-\sigma_2}}
    \end{align}
    for all $f \in \fB_p^{s_1,\frac{1}{p}-\sigma_1}(\mathbb{R}^3) \cap \fB_p^{s_3,\sigma+\sigma_2}(\mathbb{R}^3)$ and $g \in \fB_p^{s_2,\sigma+\sigma_1}(\mathbb{R}^3) \cap \fB_p^{s_4,\frac{1}{p}-\sigma_2}(\mathbb{R}^3)$.
\end{lemm}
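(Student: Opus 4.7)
The plan is to prove this product estimate by Bony's paraproduct/remainder decomposition applied in both the horizontal and vertical Littlewood--Paley variables simultaneously. Writing
\[
T^{\rm h}_u v := \sum_{k \in \mathbb{Z}} S^{\rm h}_{k-3} u \cdot \Deltahh_k v,
\qquad
R^{\rm h}(u,v) := \sum_{|k-k'|\le 2} \Deltahh_k u \cdot \Deltahh_{k'} v,
\]
and defining $T^{\rm v}, R^{\rm v}$ analogously on the vertical frequencies, the identity $uv = T^{\rm h}_u v + T^{\rm h}_v u + R^{\rm h}(u,v)$ applied first horizontally and then vertically to each piece splits $fg$ into nine bilinear terms indexed by $(\star^{\rm h}, \star^{\rm v})$ with $\star \in \{T_f, T_g, R\}$. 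The frequency-support structure of each piece dictates which input dyadic scales $(k',j')$ contribute nontrivially to $\Deltahh_k \Deltav_j(\cdot)$, and each piece is then estimated by combining Bernstein's inequality with H\"older's inequality in $L^p(\R^3)$.

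For the eight paraproduct-type pieces---those in which at least one factor sits at a strictly lower horizontal or vertical frequency than the other---one places the lower-frequency factor into $L^\infty(\R^3)$ via the Bernstein bound
\[
\|S^{\rm h}_{k'} S^{\rm v}_{j'} w\|_{L^\infty(\R^3)}
\lesssim
\sum_{\tilde k \le k',\,\tilde j \le j'} 2^{\frac{2\tilde k + \tilde j}{p}}\|\Deltahh_{\tilde k}\Deltav_{\tilde j} w\|_{L^p(\R^3)},
\]
and retains the remaining factor in its natural $L^p$ norm. After rearranging the resulting dyadic sums using the $\ell^1$-summation structure of $\fB_p^{s,\sigma}$, absolute convergence requires precisely the hypotheses $s_1,\dots,s_4 \le 2/p$ and $\sigma_1,\sigma_2 \ge 0$. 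The two summands on the right-hand side of the claim correspond to the two choices of which of $f$ or $g$ plays the lower-frequency role in the horizontal direction; the auxiliary parameters $\sigma_1, \sigma_2$ then freely distribute the vertical regularity between the two factors.

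The main technical obstacle will be the remainder-remainder piece $R^{\rm h}R^{\rm v}(f,g)$, in which both factors are localized at comparable scales $(2^{k'},2^{j'})$ with $k'\ge k-N_0$ and $j'\ge j-N_0$, so that neither factor can be sent into $L^\infty$ by Bernstein. For $p\ge 2$ one applies the anisotropic Bernstein--Nikolskii inequality
\[
\|\Deltahh_k\Deltav_j H\|_{L^p(\R^3)} \lesssim 2^{\frac{2k+j}{p}}\|H\|_{L^{p/2}(\R^3)}
\]
together with the H\"older bound $\|FG\|_{L^{p/2}} \le \|F\|_{L^p}\|G\|_{L^p}$. For $1 \le p < 2$, where $L^{p/2}$ is only quasi-Banach and the projector $\Deltahh_k\Deltav_j$ is not bounded there, one instead routes through $L^1$:
\[
\|\Deltahh_k \Deltav_j H\|_{L^p(\R^3)} \lesssim 2^{(2k+j)(1-\frac{1}{p})}\|H\|_{L^1(\R^3)}, \qquad \|FG\|_{L^1} \le \|F\|_{L^2}\|G\|_{L^2},
\]
accompanied by horizontal and vertical Bernstein losses $2^{2k'(\frac{1}{p}-\frac{1}{2}) + j'(\frac{1}{p}-\frac{1}{2})}$ to bring $F$ and $G$ back to their natural $L^p$ norms. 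In either regime, after summing the resulting geometric series over $k \le k' + N_0$ and $j \le j' + N_0$, absolute convergence yields precisely the stated conditions $s_0 > 2\max\{0, 2/p - 1\}$ and $\sigma > -\min\{1/p, 1 - 1/p\}$. Collecting all nine pieces then delivers the claimed bilinear estimate.
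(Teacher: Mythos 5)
Your approach is genuinely different from the paper's. The paper decomposes only in the vertical variable via $fg=T^{\rm v}_fg+R^{\rm v}(f,g)+T^{\rm v}_gf$, and for each vertical piece it invokes a known 2D product estimate in Chemin--Lerner spaces $\widetilde{L^r}(\mathbb{R};\dB^s_{p,1}(\mathbb{R}^2))$ to handle the horizontal direction as a black box. You instead perform a full two-variable Bony decomposition into nine pieces and hand-compute each. In principle this is a valid, more elementary alternative: it avoids quoting the Chemin--Lerner result at the cost of more bookkeeping. Your treatment of the remainder-remainder piece is essentially the same idea used internally in the paper (via $L^{p/2}$ duality for $p\geq 2$, $L^1$ duality for $1\leq p<2$), and the convergence conditions you extract from it match the hypotheses.

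However, there is a gap in the handling of the paraproduct-type pieces. You propose to put ``the lower-frequency factor'' into $L^\infty(\mathbb{R}^3)$ for all eight pieces other than $R^{\rm h}R^{\rm v}$. But only $T^{\rm h}_fT^{\rm v}_f$ and $T^{\rm h}_gT^{\rm v}_g$ have a factor that is genuinely low-frequency in \emph{both} variables, so the Bernstein bound on $S^{\rm h}_{k'}S^{\rm v}_{j'}w$ that you quote applies. For the six mixed pieces --- e.g.\ $T^{\rm h}_fT^{\rm v}_g$ (where $f$ is low horizontally but high vertically, and $g$ is high horizontally but low vertically), or $T^{\rm h}_fR^{\rm v}$ (where $f$ is low horizontally but vertically comparable) --- neither factor is entirely low-frequency. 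If you nonetheless force one factor into $L^\infty(\mathbb{R}^3)$, you pay a vertical Bernstein cost at the \emph{high} vertical scale (e.g.\ $2^{j'/p}$ with $j'$ the high scale) instead of at the output scale $2^{j/p}$, and the resulting condition degrades to $\sigma>0$ rather than the claimed $\sigma>-\min\{1/p,1-1/p\}$; similarly the $R^{\rm h}T^{\rm v}$ pieces would require $s_0>2/p$ rather than $s_0>\max\{0,2(2/p-1)\}$. The fix is to use mixed Lebesgue norms $L^\infty_{\xh}L^p_{x_3}$ and $L^p_{\xh}L^\infty_{x_3}$ for the factors, combined with Bernstein--Nikolskii (from $L^{p/2}$ or $L^1$) on the output $\Deltahh_k\Deltav_j$ in whichever direction has remainder-type structure. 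This is exactly what the paper's nested decomposition achieves automatically: the Chemin--Lerner-valued 2D estimate is precisely a packaged mixed-norm bound in the horizontal direction, so only the vertical Bony pieces remain to be treated, and the mixed-norm bookkeeping never appears explicitly.
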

\begin{rem}
    Lemma \ref{lemm:prod} implies\footnote{To see this, we choose $s=s_0-2/p$, $s_1=s_3=s$, $s_2=s_4=2/p$, $\sigma_1=1/p-\sigma$, and $\sigma_2=0$.}
    that for $1 \leq p < \infty$, $s$ and $\sigma$ satisfying
    \begin{align}
        -\min \Mp{\frac{2}{p},2\sp{1-\frac{1}{p}}} < s \leq \frac{2}{p}, 
        \quad 
        -\min \Mp{\frac{1}{p},1-\frac{1}{p}} < \sigma \leq \frac{1}{p}, 
    \end{align}
    there exists a positive constant $C=C(s,\sigma,p)$ such that 
    \begin{align}
        \n{fg}_{\fB_p^{s,\sigma}}
        \leq
        C
        \n{f}_{\fB_p^{s,\sigma}}
        \n{g}_{\fB_p^{\frac{2}{p},\frac{1}{p}}}
    \end{align}
    for all $f \in \fB_p^{s,\sigma}(\mathbb{R}^3)$ and $g \in \fB_p^{\frac{2}{p},\frac{1}{p}}(\mathbb{R}^3)$.
\end{rem}
\begin{proof}[Proof of Lemma \ref{lemm:prod}]
    It is well-known, see for instance \cite{Che-Mia-Zha-10}*{Lemma 2.8}, that for any $1 \leq p < \infty$, $1 \leq r,r_1,r_2 \leq \infty$, and $s_0,s_1,s_2 \in \mathbb{R}$ with
    \begin{align}
        \frac{1}{r} = \frac{1}{r_1} + \frac{1}{r_2}, \quad
        s_0=s_1+s_2 > \max \Mp{0,2\sp{\frac{2}{p}-1}}, \quad s_1,s_2\leq \frac{2}{p},
    \end{align}
    there holds
    \begin{align}\label{2D-prod}
        \n{\phi\psi}_{\widetilde{L^r}(\mathbb{R};\dB_{p,1}^{s_0-\frac{2}{p}}(\mathbb{R}^2))} 
        \leq 
        C 
        \n{\phi}_{\widetilde{L^{r_1}}(\mathbb{R};\dB_{p,1}^{s_1}(\mathbb{R}^2))} 
        \n{\psi}_{\widetilde{L^{r_2}}(\mathbb{R};\dB_{p,1}^{s_2}(\mathbb{R}^2))}
    \end{align}
    for all $\phi \in \widetilde{L^{r_1}}(\mathbb{R};\dB_{p,1}^{s_1}(\mathbb{R}^2))$ and $\psi \in \widetilde{L^{r_2}}(\mathbb{R};\dB_{p,1}^{s_2}(\mathbb{R}^2))$.
    Here, $\widetilde{L^r}(\mathbb{R};\dB_{p,1}^s(\mathbb{R}^2))$ ($1\leq p ,r \leq \infty$ and $s \in \mathbb{R}$) is the Chemin--Lerner space defined as the set of all functions $f \in L^r(\mathbb{R};\dB_{p,1}^s(\mathbb{R}^2))$ satisfying
    \begin{align}
        \n{f}_{\widetilde{L^r}(\mathbb{R};\dB_{p,1}^s(\mathbb{R}^2))}
        :=
        \sum_{k \in \mathbb{Z}}
        2^{sk}
        \n{\Deltahh_k f}_{L^r(\mathbb{R};L^p(\mathbb{R}^2))} < \infty.
    \end{align}
    By the Bony decomposition, we split $fg$ as 
    \begin{align}
        fg = T^{\rm v}_fg + R^{\rm v}(f,g) + T^{\rm v}_gf,
    \end{align}
    where 
    \begin{align}
        T^{\rm v}_fg
        :=
        \sum_{j \in\mathbb{Z}} \sum_{\ell \leq j-3} \Deltav_\ell f \Deltav_jg, 
        \quad
        R^{\rm v}(f,g)
        :=
        \sum_{j \in\mathbb{Z}} \sum_{|\ell -j|\leq 2} \Deltav_\ell f \Deltav_jg.
    \end{align}
    We then see that 
    \begin{align}
        \Deltav_mT^{\rm v}_fg
        =
        \Deltav_m
        \sum_{|j-m|\leq2} \sum_{\ell \leq j-3} \Deltav_\ell f \Deltav_jg, 
        \quad
        \Deltav_mR^{\rm v}(f,g)
        =
        \Deltav_m
        \sum_{j \geq m-2} \sum_{|\ell -j|\leq 2} \Deltav_\ell f \Deltav_jg.
    \end{align}
    It follows from \eqref{2D-prod} that 
    \begin{align}
        &\n{T^{\rm v}_fg}_{\fB_p^{s_0-\frac{2}{p},\sigma}}
        ={}
        \sum_{k,m \in\mathbb{Z}}
        2^{ (s_0-\f{2}{p} ) k }
        2^{\sigma m}
        \n{ \Deltahh_k  \Deltav_m T^{\rm v}_fg}_{L^p(\mathbb{R}^3)}\\
        &\quad={} 
        \sum_{m \in\mathbb{Z}}
        2^{\sigma m}
        \n{\Deltav_m T^{\rm v}_fg}_{\widetilde{L^p}(\mathbb{R};\dB_{p,1}^{s_0-\frac{2}{p}}(\mathbb{R}^2))}\\
        &\quad\leq{}
        C
        \sum_{m \in\mathbb{Z}}
        2^{\sigma m}
        \sum_{|j-m|\leq2} \sum_{\ell \leq j-3} \n{\Deltav_\ell f \Deltav_jg}_{\widetilde{L^p}(\mathbb{R};\dB_{p,1}^{s_0-\frac{2}{p}}(\mathbb{R}^2))}\\
        &\quad\leq{}
        C
        \sum_{m \in\mathbb{Z}}
        2^{\sigma m}
        \sum_{|j-m|\leq2} \sum_{\ell \leq j-3} \n{\Deltav_\ell f}_{\widetilde{L^{\infty}}(\mathbb{R};\dB_{p,1}^{s_1}(\mathbb{R}^2))} \n{\Deltav_jg}_{\widetilde{L^p}(\mathbb{R};\dB_{p,1}^{s_2}(\mathbb{R}^2))}\\
        &\quad\leq{}
        C
        \sum_{m \in\mathbb{Z}}
        2^{\sigma m}
        \sum_{|j-m|\leq2} \sum_{\ell \leq j-3} 2^{\frac{1}{p}\ell}\n{\Deltav_\ell f}_{\widetilde{L^p}(\mathbb{R};\dB_{p,1}^{s_1}(\mathbb{R}^2))} \n{\Deltav_jg}_{\widetilde{L^p}(\mathbb{R};\dB_{p,1}^{s_2}(\mathbb{R}^2))}\\
        &\quad\leq{}
        C
        \sum_{m \in\mathbb{Z}}
        2^{(\sigma+\sigma_1) m}
        \sum_{|j-m|\leq2} \sum_{\ell \leq j-3} 2^{(\frac{1}{p}-\sigma_1)\ell}\n{\Deltav_\ell f}_{\widetilde{L^p}(\mathbb{R};\dB_{p,1}^{s_1}(\mathbb{R}^2))} \n{\Deltav_jg}_{\widetilde{L^p}(\mathbb{R};\dB_{p,1}^{s_2}(\mathbb{R}^2))}\\
        &\quad\leq{}
        C
        \sum_{\ell \in \mathbb{Z}} 
        2^{(\frac{1}{p}-\sigma_1)\ell}\n{\Deltav_\ell f}_{\widetilde{L^p}(\mathbb{R};\dB_{p,1}^{s_1}(\mathbb{R}^2))} 
        \sum_{m \in\mathbb{Z}}
        2^{(\sigma+\sigma_1) m}
        \n{\Deltav_m g}_{\widetilde{L^p}(\mathbb{R};\dB_{p,1}^{s_2}(\mathbb{R}^2))}\\
        &\quad={}
        C
        \n{f}_{\fB_p^{s_1,\frac{1}{p}-\sigma_1}}
        \n{g}_{\fB_p^{s_2,\sigma+\sigma_1}}.
    \end{align}
    Similarly, we have 
    \begin{align}
        \n{T^{\rm v}_gf}_{\fB_p^{s_0-\frac{2}{p},\sigma}}
        \leq
        C
        \n{f}_{\fB_p^{s_3,\sigma+\sigma_2}}
        \n{g}_{\fB_p^{s_4,\frac{1}{p}-\sigma_2}}.
    \end{align}
    Using \eqref{2D-prod}, we have for $2 \leq p < \infty $ that
    \begin{align}
        &
        \n{R^{\rm v}(f,g)}_{\fB_p^{s_0-\frac{2}{p},\sigma}}
        =
        \sum_{m \in \mathbb{Z}}
        2^{\sigma m }
        \n{\Deltav_mR^{\rm v}(f,g)}_{\widetilde{L^p}(\mathbb{R};\dB_{p,1}^{s_0-\frac{2}{p}}(\mathbb{R}^2))}\\
        &\quad
        \leq
        C
        \sum_{m \in \mathbb{Z}}
        2^{(\sigma + \frac{1}{p})m}
        \n{\Deltav_mR^{\rm v}(f,g)}_{\widetilde{L^{\frac{p}{2}}}(\mathbb{R};\dB_{p,1}^{s_0-\frac{2}{p}}(\mathbb{R}^2))}
        \\
        &\quad
        \leq
        C
        \sum_{m \in \mathbb{Z}}
        \sum_{j \geq m-2} \sum_{|\ell -j|\leq 2}
        2^{(\sigma+\frac{1}{p})(m-j)}
        2^{(\sigma+\frac{1}{p})j}
        \n{ \Deltav_\ell f}_{\widetilde{L^p}(\mathbb{R};\dB_{p,1}^{s_1}(\mathbb{R}^2))} 
        \n{\Deltav_jg}_{\widetilde{L^p}(\mathbb{R};\dB_{p,1}^{s_2}(\mathbb{R}^2))}\\
        &\quad
        \leq
        C
        \sum_{j \in \mathbb{Z}}
        \sum_{|\ell -j|\leq 2}
        2^{(\sigma+\frac{1}{p})j}
        \n{ \Deltav_\ell f}_{\widetilde{L^p}(\mathbb{R};\dB_{p,1}^{s_1}(\mathbb{R}^2))} 
        \n{\Deltav_jg}_{\widetilde{L^p}(\mathbb{R};\dB_{p,1}^{s_2}(\mathbb{R}^2))}\\
        &\quad\leq{}
        C
        \sum_{\ell \in \mathbb{Z}} 
        2^{(\frac{1}{p}-\sigma_1)\ell}\n{\Deltav_\ell f}_{\widetilde{L^p}(\mathbb{R};\dB_{p,1}^{s_1}(\mathbb{R}^2))} 
        \sum_{j \in\mathbb{Z}}
        2^{(\sigma+\sigma_1) j}
        \n{\Deltav_jg}_{\widetilde{L^p}(\mathbb{R};\dB_{p,1}^{s_2}(\mathbb{R}^2))}\\
        &\quad={}
        C
        \n{f}_{\fB_p^{s_1,\frac{1}{p}-\sigma_1}}
        \n{g}_{\fB_p^{s_2,\sigma+\sigma_1}}.
    \end{align}
    For the case of $1 \leq p <2$, we have 
    \begin{align}
        &
        \n{R^{\rm v}(f,g)}_{\fB_p^{s_0-\frac{2}{p},\sigma}}
        =
        \sum_{m \in \mathbb{Z}}
        2^{\sigma m }
        \n{\Deltav_mR^{\rm v}(f,g)}_{\widetilde{L^p}(\mathbb{R};\dB_{p,1}^{s_0-\frac{2}{p}}(\mathbb{R}^2))}\\
        &\quad
        \leq
        C
        \sum_{m \in \mathbb{Z}}
        2^{(\sigma + (1-\frac{1}{p}))m}
        \n{\Deltav_mR^{\rm v}(f,g)}_{\widetilde{L^1}(\mathbb{R};\dB_{p,1}^{s_0-\frac{2}{p}}(\mathbb{R}^2))}
        \\&\quad
        \leq
        C
        \sum_{j \in \mathbb{Z}}
        \sum_{|\ell -j|\leq 2}
        2^{(\sigma+(1-\frac{1}{p}))j}
        \n{ \Deltav_\ell f}_{\widetilde{L^{\frac{p}{p-1}}}(\mathbb{R};\dB_{p,1}^{s_1}(\mathbb{R}^2))} 
        \n{\Deltav_jg}_{\widetilde{L^p}(\mathbb{R};\dB_{p,1}^{s_2}(\mathbb{R}^2))}\\
        &\quad\leq{}
        C
        \sum_{j \in \mathbb{Z}}
        \sum_{|\ell -j|\leq 2}
        2^{(\sigma+\frac{1}{p})j}
        \n{ \Deltav_\ell f}_{\widetilde{L^{p}}(\mathbb{R};\dB_{p,1}^{s_1}(\mathbb{R}^2))} 
        \n{\Deltav_jg}_{\widetilde{L^p}(\mathbb{R};\dB_{p,1}^{s_2}(\mathbb{R}^2))}\\
        &\quad\leq{}
        C
        \sum_{\ell \in \mathbb{Z}} 
        2^{(\frac{1}{p}-\sigma_1)\ell}\n{\Deltav_\ell f}_{\widetilde{L^p}(\mathbb{R};\dB_{p,1}^{s_1}(\mathbb{R}^2))} 
        \sum_{j \in\mathbb{Z}}
        2^{(\sigma+\sigma_1) j}
        \n{\Deltav_jg}_{\widetilde{L^p}(\mathbb{R};\dB_{p,1}^{s_2}(\mathbb{R}^2))}\\
        &\quad={}
        C
        \n{f}_{\fB_p^{s_1,\frac{1}{p}-\sigma_1}}
        \n{g}_{\fB_p^{s_2,\sigma+\sigma_1}}.
    \end{align}
    Thus, we complete the proof. 
    Here, we remark that it is the estimate of $R^{\rm v}(f,g)$ that the assumption of $\sigma$ comes into play. 
\end{proof}
By the similar argument as above and the fact that
$B_{p,1}^\tau(\mathbb{R}^d)$ is a Banach algebra for $\tau\geq d/p$, $d \in \mathbb{N}$, we immediately obtain the following lemma.
\begin{lemm}\label{lemm:prod-inhomo}
    For $1 \leq p \leq \infty$, $s\geq 2/p$, and $\sigma \geq 1/p$, there exists a positive constant $C=C(p,s,\sigma)$ such that 
    \begin{align}
        \n{fg}_{\ifB_p^{s,\sigma}}
        \leq
        C
        \n{f}_{\ifB_p^{s,\sigma}}
        \n{g}_{\ifB_p^{s,\sigma}}
    \end{align}
    for all $f,g \in \ifB_p^{s,\sigma}(\mathbb{R}^3)$.
\end{lemm}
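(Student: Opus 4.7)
The plan is to follow the vertical Bony--decomposition strategy of Lemma \ref{lemm:prod}, simplified by the fact that the indices now sit at or above the Banach algebra thresholds in both variables. Since $L^p(\mathbb{R}^3)=L^p_{x_3}(L^p_{x_{\rm h}})$, the norm factors as
\begin{align}
\n{f}_{\ifB_p^{s,\sigma}} = \sum_{j\geq 0}2^{\sigma j}\n{\iDv_j f}_{\widetilde{L^p}(\mathbb{R}_{x_3};B_{p,1}^s(\mathbb{R}^2))},
\end{align}
and the Banach algebra bound $\n{uv}_{B_{p,1}^s(\mathbb{R}^2)} \leq C \n{u}_{B_{p,1}^s(\mathbb{R}^2)}\n{v}_{B_{p,1}^s(\mathbb{R}^2)}$ (valid for $s\geq 2/p$), applied pointwise in $x_3$, reduces the claim to a one-variable algebra estimate in $x_3$ at regularity $\sigma \geq 1/p$ with values in the Banach algebra $B_{p,1}^s(\mathbb{R}^2)$.

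To prove this one-variable reduction, I would apply the vertical Bony decomposition $fg = T^v_fg + T^v_gf + R^v(f,g)$ exactly as in the proof of Lemma \ref{lemm:prod}, with the paraproduct and remainder built from the inhomogeneous blocks $\iDv_j$. For the paraproduct, I combine the horizontal algebra inequality with the vertical Bernstein embedding $\n{\iDv_\ell u}_{\widetilde{L^\infty}(\mathbb{R}_{x_3};B_{p,1}^s)} \lesssim 2^{\ell/p}\n{\iDv_\ell u}_{\widetilde{L^p}(\mathbb{R}_{x_3};B_{p,1}^s)}$; the low-frequency sum $\sum_{\ell\leq j-3}2^{\ell/p}$ is then absorbed by $\n{f}_{\ifB_p^{s,\sigma}}$ precisely because $\sigma \geq 1/p$. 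For the remainder $R^v(f,g)$ I treat the cases $p \geq 2$ (via $\widetilde{L^{p/2}}\hookrightarrow\widetilde{L^p}$ of the vertically localized factor, gaining $2^{m/p}$) and $1 \leq p < 2$ (via the dual version through $\widetilde{L^{p/(p-1)}}$) separately, mirroring the case split in the proof of Lemma \ref{lemm:prod}; the diagonal sum over $|\ell - j| \leq 2$ with $j \geq m - 2$ again converges thanks to $\sigma \geq 1/p$. The term $T^v_gf$ is symmetric.

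Because the indices $k,j$ run only over $\mathbb{N}\cup\{0\}$, there are no low-frequency convergence issues, and this is precisely what allows the borderline equalities $s = 2/p$ and $\sigma = 1/p$ to be included — such equalities were excluded by the strict inequality hypotheses in Lemma \ref{lemm:prod}. I do not expect any substantive obstacle: the proof of Lemma \ref{lemm:prod} transplants almost verbatim, the only care being the pointwise-in-$x_3$ application of the horizontal $B_{p,1}^s(\mathbb{R}^2)$ algebra bound and the standard Chemin--Lerner/Bochner identification in the vertical variable afforded by the equality $L^p(\mathbb{R}^3)=L^p_{x_3}(L^p_{x_{\rm h}})$.
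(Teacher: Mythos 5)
Your argument is correct and implements precisely what the paper's one-line proof indicates — a vertical Bony decomposition combined with the inhomogeneous horizontal Chemin--Lerner algebra estimate for $B_{p,1}^s(\mathbb{R}^2)$ with $s\geq 2/p$, using vertical Bernstein to handle the paraproduct and the $L^{p/2}$/duality split for the remainder, exactly as in Lemma~\ref{lemm:prod}. One minor misstatement in your commentary: the endpoint equalities $s=2/p$, $\sigma=1/p$ are already reachable in the homogeneous setting via the Remark following Lemma~\ref{lemm:prod}; what the inhomogeneous lemma genuinely buys is the range $s>2/p$, $\sigma>1/p$ of arbitrarily high regularity (Lemma~\ref{lemm:prod} requires $s_1,s_2\leq 2/p$ because high-regularity \emph{homogeneous} Besov spaces fail the $L^\infty$ embedding and hence are not algebras), together with the removal of the strict lower-regularity constraints.
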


For the convenience of the reader, we recall the following two lemmas that will be tacitly used in the sequel without pointing out each time.
First, we recall the boundedness of zeroth Fourier multipliers in anisotropic Besov spaces from \cite{FL-24}*{Lemma 3.6}. 
For a function $M=M(\xi)$ and $M'=M'(\xih)$ on the Fourier space, we define their Fourier multipliers $\mathscr{M}_M$ and $\mathscr{M}_{M'}$ by 
\begin{align}
    \mathscr{M}_Mf
    :=
    \mathscr{F}_{\R^3}^{-1}
    \lp{
    M(\xi)
    \mathscr{F}_{\R^3}[f](\xi)
    }, 
    \quad
    \mathscr{M}_{M'}f
    :=
    \mathscr{F}_{\R^3}^{-1}
    \lp{
    M'(\xih)
    \mathscr{F}_{\R^3}[f](\xi)
    }.
\end{align}
\begin{lemm}\label{lemm:M-bdd}
        Let $M=M(\xi)\in C^{\infty}(\R^3_{\xi}\setminus \{0\})$ and $M'=M'(\xih) \in C^{\infty}(\R^2_{\xih}\setminus \{0\})$ be homogeneous functions of $0$-th order.
        Then, there exists a positive constant $C=C(M,M')$ such that 
        \begin{align}
            \n{\mathscr{M}_{M}f}_{ \fB^{s,\sigma}_p}
            \leq{}
            C\n{f}_{ \fB^{s,\sigma}_p}, \qquad
            \n{\mathscr{M}_{M'}f}_{ \fB^{s,\sigma}_p}
            \leq{}
            C\n{f}_{ \fB^{s,\sigma}_p}
        \end{align}
        for all $  s,\sigma \in \R,1\leq p \leq \infty$, and $f \in \fB^{s,\sigma}_p(\R^3)$.
\end{lemm}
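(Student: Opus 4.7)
The plan is to reduce both claims to uniform $L^1$ bounds on the convolution kernels of the frequency-localized multipliers, and then to invoke Young's inequality block-by-block before summing the anisotropic Besov norm.

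First I would introduce fattened bumps $\widetilde{\phi}^{\rm h}_k$ and $\widetilde{\phi}^{\rm v}_j$ that are identically $1$ on $\supp\phih_k$ and $\supp\phiv_j$ respectively with slightly enlarged support. Since $M(\xi)$ is smooth away from $0$, the operators $\mathscr{M}_M$, $\Deltahh_k$, $\Deltav_j$ all commute as Fourier multipliers, so one writes
\begin{align}
\Deltahh_k\Deltav_j \mathscr{M}_M f = K_{k,j} * \Deltahh_k \Deltav_j f, \qquad
K_{k,j} := \mathscr{F}^{-1}_{\R^3}\bigl[\widetilde{\phi}^{\rm h}_k(\xih)\widetilde{\phi}^{\rm v}_j(\xi_3)M(\xi)\bigr],
\end{align}
and analogously $\Deltahh_k \mathscr{M}_{M'} f = K^{\rm h}_k *_{\xh} \Deltahh_k f$ with $K^{\rm h}_k := \mathscr{F}^{-1}_{\R^2}[\widetilde{\phi}^{\rm h}_k(\xih)M'(\xih)]$, where $*_{\xh}$ denotes convolution only in the horizontal variable. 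The whole issue becomes the uniform bounds
\begin{align}
\sup_{k,j\in\Z}\n{K_{k,j}}_{L^1(\R^3)}<\infty, \qquad \sup_{k\in\Z}\n{K^{\rm h}_k}_{L^1(\R^2)}<\infty.
\end{align}

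For the horizontal case, the standard rescaling $\xih=2^k\eta_{\rm h}$ together with the $0$-homogeneity of $M'$ gives $K^{\rm h}_k(\xh)=2^{2k}K^{\rm h}_0(2^k\xh)$, so $\n{K^{\rm h}_k}_{L^1(\R^2)}=\n{K^{\rm h}_0}_{L^1(\R^2)}<\infty$ by the smoothness and compact support of $\widetilde{\phi}M'$. For the full multiplier $M$, I rescale $\xih=2^k\eta_{\rm h}$, $\xi_3=2^j\eta_3$ and use $M(2^k\eta_{\rm h},2^j\eta_3)=M(\eta_{\rm h},r\eta_3)$ with $r:=2^{j-k}$. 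The main observation is that on the annular support $|\eta_{\rm h}|,|\eta_3|\sim 1$, one has the uniform derivative estimate
\begin{align}
\bigl|\partial^{\alpha_{\rm h}}_{\eta_{\rm h}}\partial^{\alpha_3}_{\eta_3}M(\eta_{\rm h},r\eta_3)\bigr|
=r^{\alpha_3}\bigl|(\partial^{\alpha}M)(\eta_{\rm h},r\eta_3)\bigr|
\leq C_\alpha\, r^{\alpha_3}\max(1,r)^{-|\alpha|}\leq C_\alpha,
\end{align}
since $\partial^\alpha M$ is $(-|\alpha|)$-homogeneous; the bound is independent of $r>0$ in both regimes $r\lesssim 1$ and $r\gtrsim 1$. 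Consequently, repeated integration by parts in each of $\eta_{\rm h}$ and $\eta_3$ separately yields the uniform pointwise estimate
\begin{align}
|K_{k,j}(\xh,x_3)|\leq C_N\,2^{2k}2^{j}\bigl(1+2^k|\xh|\bigr)^{-N}\bigl(1+2^j|x_3|\bigr)^{-N},
\end{align}
from which $\sup_{k,j}\n{K_{k,j}}_{L^1(\R^3)}<\infty$ follows by a trivial change of variables.

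With these kernel bounds, Young's inequality in $\R^3$ (respectively $\R^2$, with Minkowski in $x_3$) yields
\begin{align}
\n{\Deltahh_k\Deltav_j\mathscr{M}_Mf}_{L^p(\R^3)}
\leq C\n{\Deltahh_k\Deltav_j f}_{L^p(\R^3)},\quad
\n{\Deltahh_k\Deltav_j\mathscr{M}_{M'}f}_{L^p(\R^3)}
\leq C\n{\Deltahh_k\Deltav_j f}_{L^p(\R^3)}
\end{align}
with constants independent of $k,j$; multiplying by $2^{sk}2^{\sigma j}$ and summing in $k,j\in\Z$ finishes the proof. The only mildly delicate step is the verification that $r^{\alpha_3}\max(1,r)^{-|\alpha|}$ stays bounded uniformly in $r>0$; once that is in hand, the rest is routine.
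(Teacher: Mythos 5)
The paper does not prove this lemma; it is recalled from the authors' earlier work \cite{FL-24}*{Lemma 3.6}, so there is no in-paper proof to compare against. Your argument is correct. It is the standard Mikhlin--H\"ormander kernel estimate adapted to the anisotropic dyadic decomposition, and the key point, which you correctly identify and verify, is that the anisotropic rescaling $(\xih,\xi_3)=(2^k\eta_{\rm h},2^j\eta_3)$ together with the $0$-homogeneity of $M$ reduces everything to the one-parameter family of symbols $M(\eta_{\rm h},r\eta_3)$, $r=2^{j-k}$, on a fixed annulus, and the $(-|\alpha|)$-homogeneity of $\partial^\alpha M$ gives $r$-uniform derivative bounds because on that annulus $|(\eta_{\rm h},r\eta_3)|\sim\max(1,r)$ and $\alpha_3\leq|\alpha|$. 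That yields $\sup_{k,j}\n{K_{k,j}}_{L^1(\R^3)}<\infty$, after which Young's inequality block-by-block and the $\ell^1(\Z^2)$-summation are routine; the horizontal case is even simpler since the rescaled kernel is literally independent of $k$. Two minor expository remarks: you should state that the asserted pointwise decay for $K_{k,j}$ is obtained by integrating by parts in the rescaled variable $\eta$ and then undoing the change of variables, so that it follows from the $r$-uniform estimate on the rescaled kernel $\widetilde{K}_r$; and the product form of the decay $(1+2^k|\xh|)^{-N}(1+2^j|x_3|)^{-N}$ requires controlling mixed $(\eta_{\rm h},\eta_3)$-derivatives of the amplitude, which your derivative bound does indeed cover since it holds for arbitrary multi-indices $\alpha=(\alpha_{\rm h},\alpha_3)$.
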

To proceed, we recall the interpolation inequality in anisotropic Besov spaces from \cite{FL-24}*{Lemma 3.4}.
\begin{lemm}\label{lemm:inter}
    Let $1\leq p \leq \infty$.
    Let $s,s_1,s_2,\sigma,\sigma_1,\sigma_2 \in \mathbb{R}$ and $0 \leq \vartheta \leq 1$ 
    satisfy 
    $s=\vartheta s_1+(1-\vartheta)s_2$, 
    and 
    $\sigma=\vartheta \sigma_1+(1-\vartheta)\sigma_2$.
    Then, it holds
    \begin{align}
        \n{f}_{\fB_{p}^{s,\sigma}}
        \leq 
        \n{f}_{\fB_{p}^{s_1,\sigma_1}}^{\vartheta}
        \n{f}_{\fB_{p}^{s_2,\sigma_2}}^{1-\vartheta}
    \end{align}
    for all $f \in \fB_{p}^{s_1,\sigma_1}(\R^3) \cap \fB_{p}^{s_2,\sigma_2}(\R^3)$.
\end{lemm}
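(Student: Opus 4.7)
The plan is to prove the interpolation inequality by a direct application of Hölder's inequality in the counting measure on $\mathbb{Z}^2$, after decomposing the exponential weights according to the convex combination. The anisotropic Besov norm is a weighted $\ell^1$-sum of $L^p$-norms of the dyadic blocks $\Deltahh_k\Deltav_j f$, so the problem reduces to a purely combinatorial inequality for doubly-indexed sequences, with no harmonic analysis beyond the definition.

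More precisely, set $a_{k,j} := \n{\Deltahh_k\Deltav_j f}_{L^p(\R^3)}$ for $(k,j) \in \mathbb{Z}^2$. The assumptions $s = \vartheta s_1 + (1-\vartheta)s_2$ and $\sigma = \vartheta\sigma_1 + (1-\vartheta)\sigma_2$ give the pointwise factorization
\begin{align}
    2^{sk}\,2^{\sigma j}\,a_{k,j}
    = \bigl(2^{s_1 k}\,2^{\sigma_1 j}\,a_{k,j}\bigr)^{\vartheta}\,
      \bigl(2^{s_2 k}\,2^{\sigma_2 j}\,a_{k,j}\bigr)^{1-\vartheta}.
\end{align}
Summing over $(k,j) \in \mathbb{Z}^2$ and applying Hölder's inequality on $\ell^1(\mathbb{Z}^2)$ with conjugate exponents $1/\vartheta$ and $1/(1-\vartheta)$ (when $0 < \vartheta < 1$), I obtain
\begin{align}
    \n{f}_{\fB_p^{s,\sigma}}
    = \sum_{k,j \in \mathbb{Z}} 2^{sk}\,2^{\sigma j}\,a_{k,j}
    \leq \Bigl(\sum_{k,j} 2^{s_1 k}\,2^{\sigma_1 j}\,a_{k,j}\Bigr)^{\vartheta}
         \Bigl(\sum_{k,j} 2^{s_2 k}\,2^{\sigma_2 j}\,a_{k,j}\Bigr)^{1-\vartheta}
    = \n{f}_{\fB_p^{s_1,\sigma_1}}^{\vartheta}\,\n{f}_{\fB_p^{s_2,\sigma_2}}^{1-\vartheta},
\end{align}
which is the desired bound. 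The boundary cases $\vartheta \in \{0,1\}$ reduce to the trivial identity $\n{f}_{\fB_p^{s_i,\sigma_i}} \leq \n{f}_{\fB_p^{s_i,\sigma_i}}$. If either norm on the right-hand side is infinite, the claim is vacuous; if either is zero, then all dyadic blocks vanish by the definition modulo polynomials and the left-hand side is zero as well.

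I do not anticipate a genuine obstacle here: the only subtle point is the convention on the $\fB_p^{s,\sigma}$ space as a quotient by $\mathscr{P}(\R^3)$, which is already built into the definition, so no technical care is needed beyond the standard $\ell^1$-Hölder step. No product estimates, paraproducts, or Littlewood–Paley multiplier bounds from the preceding lemmas are invoked; the result is purely a consequence of the convexity of the log-sum and the weighted structure of the norm.
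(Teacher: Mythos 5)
Your proof is correct, and since the paper does not supply its own argument for this lemma (it is cited from \cite{FL-24}*{Lemma 3.4}), the relevant comparison is with the standard proof, which is exactly the one you give: factor the convex-combination weights pointwise and apply H\"older's inequality on $\ell^1(\mathbb{Z}^2)$ with exponents $1/\vartheta$ and $1/(1-\vartheta)$. The key observations — that both $s$- and $\sigma$-weights split multiplicatively, that $a_{k,j}=a_{k,j}^\vartheta a_{k,j}^{1-\vartheta}$, and that the resulting exponents recombine to the original weighted $\ell^1$-sums — are all correct, and the endpoint cases $\vartheta\in\{0,1\}$ are handled. The only comment worth adding is that the hypothesis $f\in\fB_p^{s_1,\sigma_1}\cap\fB_p^{s_2,\sigma_2}$ already guarantees finiteness of both right-hand sides, so your sentence about the vacuous case is unnecessary (though harmless). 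No gap.
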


Finally, we consider the properties of the compactness properties, which are to be used for the convergence of the approximate solutions for \eqref{eq:ANS} in the proof of Theorem \ref{thm:anal}. 
We begin with the Fatou property in the anisotropic Besov spaces.
\begin{lemm}\label{lemm:Fatou}
    Let $1 \leq p \leq \infty$ and $s,\sigma \in \mathbb{R}$.
    Then, there exists a positive constant $C$ such that 
    for any bounded sequence $\{f_n \}_{n=1}^{\infty}$ in $\ifB_p^{s,\sigma}(\mathbb{R}^3)$,
    there exists a subsequence $\{f_{n_m} \}_{m=1}^{\infty}$ and a $f \in \ifB_p^{s,\sigma}(\mathbb{R}^3)$ such that $f_{n_m} \to f$ in $\mathscr{S}'(\mathbb{R}^3)$ as $m \to \infty$ and  
    \begin{align}
        \n{f}_{\ifB_p^{s,\sigma}}
        \leq 
        C
        \liminf_{m \to \infty}
        \n{f_{n_m}}_{\ifB_p^{s,\sigma}}.
    \end{align}
\end{lemm}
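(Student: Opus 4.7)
My plan is to extract a subsequence converging in $\mathscr{S}'(\mathbb{R}^3)$, identify its limit with a Besov element by passing to weak-$\ast$ limits on each dyadic block, and then conclude with Fatou's lemma for the double series. Since the dyadic block operators $\iDh_k\iDv_j$ are given by convolution with functions whose Fourier transforms are compactly supported, the definition of $\ifB_p^{s,\sigma}$ yields the continuous embedding $\ifB_p^{s,\sigma}(\mathbb{R}^3) \hookrightarrow \mathscr{S}'(\mathbb{R}^3)$; in particular, $\{f_n\}$ is a bounded subset of $\mathscr{S}'$. Using the separability of $\mathscr{S}(\mathbb{R}^3)$, together with the Banach--Alaoglu theorem applied through a countable dense family of test functions, I can extract a subsequence, still denoted $\{f_{n_m}\}$, converging in the weak-$\ast$ topology of $\mathscr{S}'$ to some limit $f \in \mathscr{S}'(\mathbb{R}^3)$.

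The next step is to upgrade this convergence block by block. For each fixed pair $(k,j)\in (\mathbb{N}\cup\{0\})^2$, the sequence $\{\iDh_k \iDv_j f_n\}_n$ is bounded in $L^p(\mathbb{R}^3)$ by $2^{-sk-\sigma j}\|f_n\|_{\ifB_p^{s,\sigma}}$ and has Fourier support contained in a fixed bounded set. For $1<p\leq \infty$, the sequence admits a weakly-$\ast$ convergent subsequence in $L^p$; for $p=1$, I view the blocks as elements of the space of finite Radon measures and use weak-$\ast$ compactness there, noting that the compact Fourier support will allow me to show the limit is again in $L^1$ via Paley--Wiener / Bernstein-type reasoning. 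A standard diagonal extraction over the countable index set $(k,j)$ produces one subsequence that works for every pair simultaneously. Since $\iDh_k\iDv_j$ is continuous on $\mathscr{S}'$, the $\mathscr{S}'$-limit of $\iDh_k\iDv_j f_{n_m}$ is $\iDh_k\iDv_j f$, so by uniqueness of limits this coincides with the weak-$\ast$ $L^p$ limit obtained above.

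Weak-$\ast$ lower semicontinuity of the $L^p$ norm then yields
\begin{align}
\n{\iDh_k\iDv_j f}_{L^p(\mathbb{R}^3)}
\leq C \liminf_{m\to\infty}\n{\iDh_k\iDv_j f_{n_m}}_{L^p(\mathbb{R}^3)}
\end{align}
for each $(k,j)$, where the constant $C$ absorbs the possible loss in the $p=1$ case when passing from measures back to $L^1$. Applying Fatou's lemma for counting measure on $(\mathbb{N}\cup\{0\})^2$ with the weights $2^{sk+\sigma j}$ gives
\begin{align}
\n{f}_{\ifB_p^{s,\sigma}}
=\sum_{k,j\geq 0} 2^{sk+\sigma j}\n{\iDh_k\iDv_j f}_{L^p}
\leq C\liminf_{m\to\infty}\sum_{k,j\geq 0} 2^{sk+\sigma j}\n{\iDh_k\iDv_j f_{n_m}}_{L^p}
=C\liminf_{m\to\infty}\n{f_{n_m}}_{\ifB_p^{s,\sigma}},
\end{align}
which is the desired inequality and simultaneously shows $f\in\ifB_p^{s,\sigma}(\mathbb{R}^3)$. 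The main technical obstacle I anticipate is handling the endpoint cases $p=1$ and $p=\infty$: for $p=\infty$, weak-$\ast$ compactness in $L^\infty=(L^1)^\ast$ is straightforward, while for $p=1$ the limit on each block has to be identified as an $L^1$ function, which is where the spectral localization of the dyadic blocks plays an essential role.
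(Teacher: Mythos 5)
Your argument is correct and arrives at the same estimate as the paper. Both proofs follow the template of \cite{Bah-Che-Dan-11}*{Theorem 2.25}, but they are organized differently. The paper works block by block: it uses Bernstein's inequality to observe that each $\{\iDh_k\iDv_j f_n\}_n$ is bounded in $L^p\cap L^\infty$, applies a diagonal argument to extract subsequential limits $\widetilde f_{k,j}$ with the right Fourier localization, and then \emph{assembles} the limit as $f=\sum_{k,j}\widetilde f_{k,j}$, invoking a separate weak-$\ast$ compactness argument in $\ell^1((\mathbb{N}\cup\{0\})^2)=(c_0)^*$ to establish convergence of the series and to transfer the $\liminf$ bound to the assembled object. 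You instead first extract an $\mathscr{S}'$ weak-$\ast$ limit $f$ directly via the embedding $\ifB_p^{s,\sigma}\hookrightarrow\mathscr{S}'$ together with Banach--Alaoglu and separability, then identify the block limits with $\iDh_k\iDv_j f$ using weak-$\ast$ continuity of the multiplier, and close with Fatou's lemma on the counting measure. Your route has the advantage of avoiding the summability issue for $\sum_{k,j}\widetilde f_{k,j}$ entirely, since $f$ is already constructed; the paper's route is more self-contained about why the blocks reassemble to a tempered distribution. At the endpoints you invoke measures and Paley--Wiener to recover $L^1$, whereas the paper notes that Bernstein gives $L^p\cap L^\infty$ boundedness of the blocks; both observations handle $p=1$ cleanly, and in fact once one knows the weak-$\ast$ measure limit has compactly supported Fourier transform it is automatically an $L^1$ function with total variation equal to its $L^1$ norm, so the constant $C$ you reserve for that case can be taken to be $1$.
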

\begin{proof}
The proof follows from \cite{Bah-Che-Dan-11}*{Theorem 2.25}, so we only provide the main steps. 
By the Bernstein inequality, we see for any $k,j\in \mathbb{N}\cup \{0\}$ the sequence $\{ \iDh_k \iDv_j f_n \}_{n=1}^{\infty}$ is bounded in $L^p(\R^3)\cap L^{\infty}(\R^3)$. By the diagonal argument, there exists a subsequence $\{ f_{n_{m}} \}_{m=1}^{\infty}$ and a sequence $\{\widetilde{f}_{k,j} \}_{k,j \in \mathbb{N}\cup \{0\}}$ of smooth functions with Fourier transform supported in 
\begin{align}
 &  \{ \xi=(\xih,\xi_3)\ ;\ 2^{k-1}\leq |\xih|\leq 2^{k+1},\ 2^{j-1}\leq |\xi_3|\leq 2^{j+1} \} 
\quad \text{if}\quad k,j \geq 1, \\
&
\{ \xi=(\xih,\xi_3)\ ;\   |\xih|\leq 2 ,\ 2^{j-1}\leq |\xi_3|\leq 2^{j+1} \}
     \quad \text{if}\quad k=0,j \geq 1, \\
 &  \{ \xi=(\xih,\xi_3)\ ;\  2^{k-1}\leq |\xih|\leq 2^{k+1},\ |\xi_3|\leq 2 \}
     \quad \text{if}\quad k\geq 1, j=0, \\
&
     \{ \xi=(\xih,\xi_3)\ ;\   |\xih|\leq 2,\ |\xi_3|\leq 2 \}
     \quad \text{if}\quad k=j=0, 
\end{align}
and such that for any $k,j\in \mathbb{N}\cup \{0\}$ and $\psi \in \mathscr{S}(\mathbb{R}^3)$ 
\begin{align}
    \lim_{m \rightarrow \infty}
    \lr{
    \iDh_k \iDv_j f_{n_{m}}, 
    \psi
    }
    = \lr{
    \widetilde{f}_{k,j},
    \psi
    }, \qquad 
    \n{
    \widetilde{f}_{k,j}
    }_{L^p} 
    \leq  \liminf_{m \rightarrow \infty}
    \n{
    \iDh_k \iDv_j   f_{n_{m}}
    }_{L^p} . 
\end{align}
Then, we see that the sequence $\Mp{\{ 2^{s k} 2^{\sigma j} 
\| \iDh_k \iDv_j f_{n_{m}} \|_{L^p} \}_{k,j\in \mathbb{N}\cup \{0\}}}_{m\in \mathbb{N}}$ is bounded in $\ell^{1}((\mathbb{N}\cup\{0\})^2)$. Thus, there exists a sequence $\{ c_{k,j}\}_{k,j\in \mathbb{N}\cup \{0\}}$ of $\ell^{1}((\mathbb{N}\cup\{0\})^2)$ such that 
for any sequence $\{ d_{k,j}\}_{k,j\in \mathbb{N}\cup \{0\}}$ satisfying $d_{k,j} \rightarrow 0$ as $|(k,j)|\rightarrow \infty$, it holds 
\begin{align}
 &  \lim_{m \rightarrow \infty}
   \sum_{k,j\in \mathbb{N}\cup \{0\}}
   2^{s k} 2^{\sigma j} \n{  \iDh_k \iDv_j f_{n_{m}} }_{L^p}
   d_{k,j}
   =
   \sum_{k,j\in \mathbb{N}\cup \{0\}}
   c_{k,j} d_{k,j}, \\
& \n{ \{  c_{k,j} \}_{k,j\in \mathbb{N}\cup \{0\}} }_{\ell^{1}((\mathbb{N}\cup\{0\})^2)} 
\leq 
 \liminf_{m \rightarrow \infty}
 \n{   f_{n_{m}}  }_{\ifB_p^{s,\sigma}}. 
\end{align}
From the above relation, we see that the summation  
\begin{align}
    f:=\sum_{k,j\in \mathbb{N}\cup \{0\}} \widetilde{f}_{k,j} 
\end{align}
converges in $\mathscr{S}'(\mathbb{R}^3)$. Moreover, from the definition of $\widetilde{f}_{k,j}$ we deduce that for all $0\leq M_1 <N_1,0\leq M_2<N_2$, 
\begin{align}
    \sum_{k=M_1}^{N_1}  \sum_{j=M_2}^{N_2}
     \iDh_k \iDv_j  f 
     = \lim_{m \rightarrow \infty}
     \sum_{k=M_1}^{N_1}  \sum_{j=M_2}^{N_2} 
     \iDh_k \iDv_j f_{n_{m}}, 
\end{align}
which shows that $ f_{n_{m}}$ converges to $f$ in $\mathscr{S}'(\mathbb{R}^3)$.    
\end{proof}

Similarly to Lemma \ref{lemm:Fatou} and \cite{Bah-Che-Dan-11}*{Theorem 2.25}, we also obtain the Fatou property for homogeneous anisotropic Besov spaces. We remark that the constraint $s_1 \leq 2/p$ below is used to ensure the convergence of series of the homogeneous Littlewood--Paley decomposition. 
\begin{lemm}\label{lemm:Fatou-time}
    Let $1 \leq p \leq \infty$ and $s_1,s_2,\sigma \in \mathbb{R}$ with $s_1 \leq 2/p$ and $\sigma \leq 1/p$.
    Then, there exists a positive constant $C$ such that for any $\{ f_n \}_{n=1}^{\infty} \subset \fB_p^{s_1,\sigma}(\mathbb{R}^3) \cap \fB_p^{s_2,\sigma}(\mathbb{R}^3)$ be a bounded sequence, there exists a subsequence $\{f_{n_m}\}_{m=1}^{\infty}$ and $f \in \fB_p^{s_1,\sigma}(\mathbb{R}^3) \cap \fB_p^{s_2,\sigma}(\mathbb{R}^3)$ such that 
    $f_{n_m} \to f$ in $\mathscr{S}'(\mathbb{R}^3)$ and 
    \begin{align}
        \n{f}_{\fB_p^{s_k,\sigma} }
        \leq
        C
        \liminf_{m \to \infty}
        \n{f_{n_m}}_{\fB_p^{s_k,\sigma}}, 
        \quad k=1,2.
    \end{align} 
\end{lemm}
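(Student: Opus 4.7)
The plan is to mirror the proof of Lemma \ref{lemm:Fatou}, replacing the inhomogeneous decomposition by the homogeneous one $\{\Deltahh_k \Deltav_j\}_{k,j\in\mathbb{Z}}$, and to pay extra attention to the two new features: (i) the indices $(k,j)$ now range over all of $\mathbb{Z}^2$ so the low-frequency blocks must be handled separately, and (ii) the limit must be identified as a class in $\mathscr{S}'(\mathbb{R}^3)/\mathscr{P}(\mathbb{R}^3)$, which is exactly where the thresholds $s_1 \leq 2/p$ and $\sigma \leq 1/p$ enter. The single subsequence I extract will work for both norms simultaneously, since the Littlewood--Paley projectors do not depend on $s_k$.

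First I would fix $(k,j) \in \mathbb{Z}^2$ and use the anisotropic Bernstein inequality to note that $\{\Deltahh_k \Deltav_j f_n\}_{n}$ is bounded in $L^p \cap L^\infty$, with Fourier transform supported in a fixed anisotropic annulus of size $\sim (2^k, 2^j)$. By weak-$*$ compactness on each such annulus and a standard diagonal extraction over the countable set $(k,j)\in\mathbb{Z}^2$, I obtain one subsequence $\{f_{n_m}\}$ and functions $\widetilde{f}_{k,j}$, still frequency-localized in the same annuli, such that $\Deltahh_k \Deltav_j f_{n_m} \rightharpoonup \widetilde{f}_{k,j}$ in $\mathscr{S}'$, and such that lower semicontinuity gives
\begin{align}
    \|\widetilde{f}_{k,j}\|_{L^p(\R^3)} \leq \liminf_{m\to\infty}\|\Deltahh_k\Deltav_j f_{n_m}\|_{L^p(\R^3)}.
\end{align}
Next I would define $f := \sum_{k,j\in\mathbb{Z}} \widetilde{f}_{k,j}$. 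The high-frequency tail ($k\geq 0$ or $j\geq 0$) converges absolutely in $L^p$, thanks to Fatou's lemma applied in $\ell^1(\mathbb{Z}^2; 2^{s_1 k}2^{\sigma j})$. For the low-frequency tail ($k,j \leq 0$) one tests against $\psi\in\mathscr{S}(\R^3)$ and uses Bernstein to write $|\langle \widetilde{f}_{k,j},\psi\rangle| \lesssim 2^{(2/p) k}2^{(1/p) j}\|\widetilde{f}_{k,j}\|_{L^p}\|\psi\|_{L^{p'}}$; combined with $s_1 \leq 2/p$ and $\sigma \leq 1/p$, the resulting series is summable modulo a polynomial in $\xh$ and $x_3$, which is the standard realization argument for homogeneous anisotropic Besov spaces (analogous to \cite{Bah-Che-Dan-11}*{Theorem 2.25} in the isotropic setting).

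Once $f \in \mathscr{S}'/\mathscr{P}$ is defined, the support-in-frequency of each $\widetilde{f}_{k,j}$ and the almost-orthogonality of the decomposition give $\Deltahh_k\Deltav_j f = \widetilde{f}_{k,j}$ for every $(k,j)$, so the distributional convergence $f_{n_m}\to f$ in $\mathscr{S}'$ follows by summing the partial decompositions. Finally, for $k=1,2$, applying Fatou's lemma to the $\ell^1(\mathbb{Z}^2)$-sums with weights $2^{s_k k'}2^{\sigma j'}$ gives
\begin{align}
    \|f\|_{\fB_p^{s_k,\sigma}} = \sum_{k',j'\in\mathbb{Z}} 2^{s_k k'}2^{\sigma j'}\|\widetilde{f}_{k',j'}\|_{L^p} \leq C\liminf_{m\to\infty}\|f_{n_m}\|_{\fB_p^{s_k,\sigma}},
\end{align}
which is the desired bound for both values simultaneously, since the subsequence was chosen independently of $s_k$. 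The main obstacle is the low-frequency step: verifying that the tail $\sum_{k,j\leq 0}\widetilde{f}_{k,j}$ converges in $\mathscr{S}'/\mathscr{P}$ under the sharp conditions $s_1 \leq 2/p$ and $\sigma\leq 1/p$. In Lemma \ref{lemm:Fatou} this issue was absent because $k,j\geq 0$; here it is precisely where both regularity constraints are used.
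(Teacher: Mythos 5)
Your overall plan matches the paper's intended argument: the paper does not actually write out this proof, it only points to Lemma~\ref{lemm:Fatou} and \cite{Bah-Che-Dan-11}*{Theorem 2.25} and adds the remark that the constraint $s_1\leq 2/p$ ensures convergence of the homogeneous decomposition series. You have correctly identified that the only genuinely new ingredient relative to Lemma~\ref{lemm:Fatou} is the low-frequency convergence, and you have located the right mechanism (anisotropic Bernstein, the weights $2^{(2/p-s_1)k}2^{(1/p-\sigma)j}\leq 1$ on $k,j\leq 0$, $\ell^1$ summability).

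Two points need repair, though. First, the displayed Bernstein estimate is wrong as written: $|\langle\widetilde f_{k,j},\psi\rangle|\lesssim 2^{2k/p}2^{j/p}\|\widetilde f_{k,j}\|_{L^p}\|\psi\|_{L^{p'}}$ is the \emph{reverse} of what you need for $k,j\leq 0$ (H\"older already gives the bound without the factor, and the factor is $\leq 1$). What you mean is $|\langle\widetilde f_{k,j},\psi\rangle|\leq\|\widetilde f_{k,j}\|_{L^\infty}\|\psi\|_{L^1}\lesssim 2^{2k/p}2^{j/p}\|\widetilde f_{k,j}\|_{L^p}\|\psi\|_{L^1}$; then $s_1\leq 2/p$, $\sigma\leq 1/p$ make the low-frequency block $\ell^1$-summable. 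Second, and more substantively: you conclude that the low-frequency tail converges ``modulo a polynomial'', but the lemma asserts $f_{n_m}\to f$ in $\mathscr{S}'(\mathbb{R}^3)$, not in $\mathscr{S}'/\mathscr{P}$. This is precisely what the hypotheses $s_1\leq 2/p$, $\sigma\leq 1/p$ (together with the $\ell^1$ index) buy you: the low-frequency sum converges absolutely in $L^\infty$ (and similar hybrid bounds handle the mixed quadrants $k\leq 0<j$ and $j\leq 0<k$ using one-directional Bernstein plus the rapid decay of $\widetilde\Delta_k^{\rm h}\widetilde\Delta_j^{\rm v}\psi$ in the high direction), so no quotienting by polynomials is required. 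If you only proved convergence modulo polynomials you would not have proved the stated claim; the sharp thresholds are exactly the reason you do not need that escape hatch.
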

Next, we investigate the relation between the anisotropic homogeneous and inhomogeneous Besov spaces.
\begin{lemm}\label{lemm:embedding}
    Let $1 \leq p \leq \infty$ and $s,\sigma>0$.
    \begin{enumerate}
        \item 
        There exists a positive constant $C=C(p,s,\sigma)$ such that
        \begin{align}
            C^{-1}\n{f}_{\ifB_p^{s,\sigma}} 
            \leq{}& 
            \n{f}_{L^p} + \n{f}_{\fB_p^{s,\sigma}}
            \\
            &+
            \sum_{j \geq 1}
            2^{\sigma j}
            \n{\Deltav_j f}_{L^p}
            +
            \sum_{k \geq 1} 
            2^{sk}
            \n{\Deltahh_k f}_{L^p}
            \leq C \n{f}_{\ifB_p^{s,\sigma}}
        \end{align}
        for all $f \in \ifB_p^{s,\sigma}(\mathbb{R}^3)$. 
        \item 
        For any $R>0$, there exists a positive constant $C_R=C_R(p,s,\sigma)$ such that 
        \begin{align}
            \n{\psi f}_{\ifB_p^{s,\sigma}}
            \leq
            C_R
            \n{\psi f}_{\fB_p^{s,\sigma}}
        \end{align}
        for all $\psi \in C_c^{\infty}(\mathbb{R}^3)$ with $\supp \psi \subset B_R:=\{x \in\mathbb{R}^3\ ;\ |x| \leq R\}$ and $f \in \fB_p^{s,\sigma}(\mathbb{R}^3)$.
    \end{enumerate}
\end{lemm}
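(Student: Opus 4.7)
The proof rests on the Fourier-support relations: $\iDh_k = \Deltahh_k$ for $k \geq 1$ and $\iDh_0\Deltahh_k = 0$ for $k \geq 2$, with symmetric statements vertically. For the inequality (middle quantity) $\leq C\|f\|_{\ifB_p^{s,\sigma}}$, I would begin from the representation $f = \sum_{k,j \geq 0}\iDh_k\iDv_j f$. The bound on $\|f\|_{L^p}$ follows from $2^{sk+\sigma j}\geq 1$ (valid for $k,j\geq 0$, $s,\sigma\geq 0$) and the triangle inequality. For $\|f\|_{\fB_p^{s,\sigma}}$, split the homogeneous indices $(k,j)\in\mathbb{Z}^2$ into four regions: on $\{k,j\geq 1\}$ the identification $\Deltahh_k\Deltav_j f = \iDh_k\iDv_j f$ is immediate; in each of the mixed and low--low regions, Fourier support considerations identify $\Deltahh_k\Deltav_j f$ with a block of the inhomogeneous decomposition composed with $\iDh_0$ and/or $\iDv_0$, and the summation over negative indices converges thanks to $\sum_{k\leq 0}2^{sk}<\infty$ and $\sum_{j\leq 0}2^{\sigma j}<\infty$, which is where the hypothesis $s,\sigma>0$ enters. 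The two single-direction sums in the middle quantity are treated analogously. For the converse inequality $\|f\|_{\ifB_p^{s,\sigma}}\leq C(\text{middle quantity})$, I would decompose the inhomogeneous sum $\sum_{k,j\geq 0}$ into the four corners $(0,0)$, $(k\geq 1, j=0)$, $(k=0, j\geq 1)$, $(k,j\geq 1)$, each being bounded by the corresponding term of the middle quantity using $\iDh_k=\Deltahh_k$ for $k\geq 1$ and $\|\iDh_0\iDv_0 f\|_{L^p}\lesssim \|f\|_{L^p}$.

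\textbf{Plan for Part (2).} Using Part (1), it suffices to show that, for $g=\psi f$ supported in $B_R$,
\begin{align}
    \|g\|_{L^p} + \sum_{k\geq 1}2^{sk}\|\Deltahh_k g\|_{L^p} + \sum_{j\geq 1}2^{\sigma j}\|\Deltav_j g\|_{L^p} \leq C_R\|g\|_{\fB_p^{s,\sigma}}.
\end{align}
Since $g$ is compactly supported in $B_R$ and belongs to $L^p(\mathbb{R}^3)$, H\"older's inequality gives $g\in L^q(\mathbb{R}^3)$ with $\|g\|_{L^q}\leq C_R\|g\|_{L^p}$ for every $1\leq q\leq p$. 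For the single-direction sums, I would write $\|\Deltahh_k g\|_{L^p}\leq\sum_{j'\in\mathbb{Z}}\|\Deltahh_k\Deltav_{j'}g\|_{L^p}$: the contribution from $j'\geq 0$ is absorbed by $\|g\|_{\fB_p^{s,\sigma}}$ directly (since $2^{\sigma j'}\geq 1$), while the contribution from $j'\leq -1$ is handled via Bernstein in the vertical direction combined with the compact support in $x_3$ (for $p>1$ this yields a geometric factor $2^{(1-1/p)j'}$). For the $L^p$ bound on $g$, use the reconstruction $g=\sum_{k',j'\in\mathbb{Z}}\Deltahh_{k'}\Deltav_{j'}g$ (convergence in $\mathscr{S}'(\mathbb{R}^3)$; the polynomial ambiguity in $\fB_p^{s,\sigma}$ is pinned down by compact support of $g$), bound the high-frequency part $\sum_{k',j'\geq 0}$ directly by $\|g\|_{\fB_p^{s,\sigma}}$, and the low-frequency part by Bernstein applied to $g\in L^1(\mathbb{R}^3)$ using $\|g\|_{L^1}\leq C_R\|g\|_{L^p}$, with the resulting series summable thanks to $s,\sigma>0$.

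\textbf{Main obstacle.} The delicate point is the analysis of the low-frequency homogeneous blocks in the borderline case $p=1$, where Bernstein provides no $L^1\to L^1$ gain. To close the argument one must exploit the vanishing mean $\int \Deltahh_{k'}\Deltav_{j'}g\,dx=0$ (a consequence of $\phi(0)=0$) jointly with the compact support of $g$---typically via a commutator identity $[\Deltahh_{k'}\Deltav_{j'},\psi]$ that converts the vanishing-mean cancellation into a quantitative gain $2^{\varepsilon k'+\varepsilon j'}$ at low frequencies, which then absorbs the low-frequency tail into $C_R\|g\|_{\fB_1^{s,\sigma}}$.
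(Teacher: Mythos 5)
Your plan for Part (1) is correct and follows the paper's proof essentially step by step: decompose the homogeneous norm over the four quadrants of $\mathbb{Z}^2$ determined by $k\gtrless 1$, $j\gtrless 1$, identify high-high blocks with inhomogeneous ones, absorb the negative-index tails by $\sum_{k\leq 0}2^{sk}<\infty$, $\sum_{j\leq 0}2^{\sigma j}<\infty$ (this is where $s,\sigma>0$ enters), and for the converse decompose the inhomogeneous sum into its four corners.

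For Part (2), however, there is a genuine gap, and it is not confined to $p=1$. Your estimate for $\|g\|_{L^p}$ (with $g=\psi f$) bounds the low-frequency block by Bernstein and then $\|g\|_{L^1}\leq C_R\|g\|_{L^p}$; this produces $\|g\|_{L^p}\leq\|g\|_{\fB_p^{s,\sigma}}+C'C_R\|g\|_{L^p}$, and since $C_R$ grows with $R$ the $\|g\|_{L^p}$ on the right cannot be absorbed, so the inequality does not close. The same circularity appears in your bound of $\sum_{k\geq 1}2^{sk}\|\Deltahh_k g\|_{L^p}$: vertical Bernstein plus the compact $x_3$-support of $\Deltahh_k g$ returns $CR^{1-1/p}\|\Deltahh_k g\|_{L^p}$ on the right. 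Moreover, the low-frequency convergence is \emph{not} ``thanks to $s,\sigma>0$'' (those exponents sit with the wrong sign for negative indices; the gain comes from the Bernstein exponent $1-1/p$, which is why your naive version already needs $p>1$), you do not address the mixed blocks $(k'\geq 0,\,j'\leq -1)$ and $(k'\leq -1,\,j'\geq 0)$ where Bernstein only gains in one direction, and the commutator/vanishing-mean device you propose for $p=1$ is speculative and turns out to be unnecessary.

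The paper's actual device is different and resolves all of these at once. One introduces two negative integers $k_R,j_R$ and splits $\psi f$ into the four blocks determined by $k\gtrless k_R$, $j\gtrless j_R$. The low-low block is bounded by $CR^3\,2^{2k_R}2^{j_R}\|\psi f\|_{L^p}$ (H\"older on $B_R$, Bernstein $L^1\to L^\infty$, H\"older back -- valid for every $p$, including $p=1$, with no recourse to cancellation), the mixed blocks by $CR^2 2^{k_R}$ and $CR\,2^{j_R}$ times $\|\psi f\|_{L^p(B_R)}$ plus the one-direction sums, and the high-high block by $C2^{-sk_R-\sigma j_R}\|\psi f\|_{\fB_p^{s,\sigma}}$. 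Choosing $k_R,j_R$ so negative that the three small prefactors sum to $\leq 1/2$ allows absorption -- this is precisely the step missing from your plan. The remaining one-direction sums, e.g.\ $\sum_{j\geq 1}2^{\sigma j}\|\Deltav_j(\psi f)\|_{L^p((-R,R)^2\times\R)}$, are then handled not by Bernstein but by noting that $\Deltav_j(\psi f)$ has compact $\xh$-support and invoking the equivalence $\dB_{p,1}^s(K)=B_{p,1}^s(K)$ on compact $K\subset\R^2$ from \cite{Bah-Che-Dan-11}*{Proposition 2.93}, which converts the inhomogeneous $k$-sum into the homogeneous one at the cost of an $R$-dependent constant.
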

\begin{proof}
Let us decompose the homogeneous norm of $f$ as 
    \begin{align}\label{dec}
        \n{f}_{\fB_p^{s,\sigma}}
        ={}&
        \sum_{k \leq 0} 
        \sum_{j \leq 0}
        2^{sk}2^{\sigma j}
        \n{\Deltahh_k\Deltav_j f}_{L^p}
        +
        \sum_{k \leq 0} 
        \sum_{j \geq 1}
        2^{sk}2^{\sigma j}
        \n{\Deltahh_k\Deltav_j f}_{L^p}\\
        &
        +
        \sum_{k \geq 1} 
        \sum_{j \leq 0}
        2^{sk}2^{\sigma j}
        \n{\Deltahh_k\Deltav_j f}_{L^p}
        +
        \sum_{k \geq 1}
        \sum_{j \geq 1} 
        2^{sk}2^{\sigma j}
        \n{\Deltahh_k\Deltav_j f}_{L^p}.
    \end{align}
    We then, see that 
    \begin{align}
        &\sum_{k \leq 0} 
        \sum_{j \leq 0}
        2^{sk}2^{\sigma j}
        \n{\Deltahh_k\Deltav_j f}_{L^p}
        \leq
        C
        \sum_{k \leq 0} 
        \sum_{j \leq 0}
        2^{sk}2^{\sigma j}
        \n{\iDh_0\iDv_0f}_{L^p}
        \leq 
        C
        \n{\iDh_0\iDv_0f}_{L^p},
        \\
        &
        \begin{aligned}
        \sum_{k \leq 0} 
        \sum_{j \geq 1}
        2^{sk}2^{\sigma j}
        \n{\Deltahh_k\Deltav_j f}_{L^p}
        \leq 
        C
        \sum_{k \leq 0} 
        2^{sk}
        \sum_{j \geq 1}
        2^{\sigma j}
        \n{\iDh_0\iDv_j f}_{L^p}
        \leq 
        C
        \sum_{j \geq 1}
        2^{\sigma j}
        \n{\iDh_0\iDv_j f}_{L^p},
        \end{aligned}
        \\
        &
        \sum_{k \geq 1} 
        \sum_{j \leq 0}
        2^{sk}2^{\sigma j}
        \n{\Deltahh_k\Deltav_j f}_{L^p}
        \leq
        C
        \sum_{j \leq 0}
        2^{\sigma j}
        \sum_{k \geq 1} 
        2^{sk}
        \n{\iDh_k\iDv_0 f}_{L^p}
        \leq
        C
        \sum_{k \geq 1} 
        2^{sk}
        \n{\iDh_k\iDv_0 f}_{L^p},
    \end{align}
    For the estimate of $\n{f}_{L^p}$, it holds
    \begin{align}
        \n{f}_{L^p}
        \leq
        \sum_{j,k \in \mathbb{N}\cup\{0\}}
        \n{\iDh_k\iDv_jf}_{L^p}
        \leq
        C
        \sum_{j,k \in \mathbb{N}\cup\{0\}}
        2^{sk}2^{\sigma j}
        \n{\iDh_k\iDv_jf}_{L^p}
        \leq
        C
        \n{f}_{\ifB_p^{s,\sigma}}.
    \end{align}
    Moreover, we see that 
    \begin{align}
        &
        \sum_{j \geq 1}
        2^{\sigma j}
        \n{\Deltav_j f}_{L^p}
        \leq
        \sum_{j \geq 1}
        2^{\sigma j}
        \n{\iDh_0\Deltav_j f}_{L^p}
        +
        \sum_{j,k \geq 1}
        2^{sk}2^{\sigma j}
        \n{ \Deltahh_k  \Deltav_j f}_{L^p}
        \leq
        C
        \n{f}_{\ifB_p^{s,\sigma}},
        \\
        &
        \sum_{k \geq 1} 
        2^{sk}
        \n{\Deltahh_k f}_{L^p}
        \leq
        \sum_{k \geq 1} 
        2^{sk}
        \n{\Deltahh_k \iDv_0 f}_{L^p}
        +
        C
        \sum_{k,j \geq 1} 
        2^{sk}2^{\sigma j}
        \n{\Deltahh_k \Deltav_j f}_{L^p}
        \leq
        C
        \n{f}_{\ifB_p^{s,\sigma}}.
    \end{align}
    Thus, we have 
    \begin{align}
        \n{f}_{L^p} 
        + 
        \n{f}_{\fB_p^{s,\sigma}}
        +
        \sum_{j \geq 1}
        2^{\sigma j}
        \n{\Deltav_j f}_{L^p}
        +
        \sum_{k \geq 1} 
        2^{sk}
        \n{\Deltahh_k f}_{L^p}
        \leq C \n{f}_{\ifB_p^{s,\sigma}}.
    \end{align}
    For the reverse inequality, we see that 
    \begin{align}
        \n{f}_{\ifB_p^{s,\sigma}}
        ={}& 
        \n{\iDh_0\iDv_0f}_{L^p}
        +\sum_{j \geq 1}
        2^{\sigma j}
        \n{\iDh_0\Deltav_j f}_{L^p}
        \\
        &+
        \sum_{k \geq 1} 
        2^{sk}
        \n{\Deltahh_k\iDv_0 f}_{L^p}
        +
        \sum_{k \geq 1}
        \sum_{j \geq 1} 
        2^{sk}2^{\sigma j}
        \n{\Deltahh_k\Deltav_j f}_{L^p}
        \\ 
        \leq{}& 
        C
        \n{f}_{L^p}
        +
        C
        \sum_{j \geq 1}
        2^{\sigma j}
        \n{\Deltav_j f}_{L^p}
        +
        C
        \sum_{k \geq 1} 
        2^{sk}
        \n{\Deltahh_kf}_{L^p}
        +
        \n{f}_{\fB_p^{s,\sigma}}.
\end{align}  
This completes the proof of the first assertion.

The proof of the second assertion is inspired by \cite{Bah-Che-Dan-11}*{Proposition 2.93}. By the first assertion, we see that 
\begin{align}
    \n{\psi f}_{\ifB_p^{s,\sigma}}
    \leq{}&
    C 
    \n{\psi f}_{L^p(B_R)} 
    + 
    C
    \n{\psi f}_{\fB_p^{s,\sigma}}\\
    &
    + 
    C
    \sum_{j \geq 1}
    2^{\sigma j}
    \n{\Deltav_j (\psi f)}_{L^p((-R,R)^2 \times \mathbb{R})}
    +
    C
    \sum_{k \geq 1} 
    2^{sk}
    \n{\Deltahh_k (\psi f) }_{L^p(\mathbb{R}^2 \times (-R,R))}  
    . 
\end{align}
We begin with the estimate of $\n{\psi f}_{L^p(B_R)}$. Let $k_R,j_R\in \mathbb{Z}$ be negative integers to be determined later. 
Then, it holds that 
\begin{align}
\psi f
& =
\sum_{k\leq k_R,j\leq j_R} \Deltahh_{k} \Deltav_{j}  (\psi f)
+
\sum_{k> k_R,j > j_R} \Deltahh_{k} \Deltav_{j} (\psi f) \\
& \quad 
+ 
\sum_{k\leq k_R,j> j_R} \Deltahh_{k} \Deltav_{j} (\psi f)
+
\sum_{k > k_R,j\leq j_R} \Deltahh_{k} \Deltav_{j} (\psi f). 
\end{align}
Due to $\supp (\psi f) \subset B(0,R)$, we have 
\begin{align}
    \n{\psi f}_{L^p( B_R  ) } 
    &  \leq 
    \n{
    \sum_{k\leq k_R,j\leq j_R} \Deltahh_{k} \Deltav_{j}  (\psi f)
    }_{L^p( B_R  ) } 
    + 
    \n{
    \sum_{k> k_R,j > j_R} \Deltahh_{k} \Deltav_{j} (\psi f) 
    }_{L^p( B_R  ) }  
    \\
    &
    \quad + 
    \n{
    \sum_{k \leq k_R,j> j_R} \Deltahh_{k} \Deltav_{j} (\psi f)
    }_{L^p( B_R  ) } 
    +
    \n{
    \sum_{k > k_R, j\leq j_R} \Deltahh_{k} \Deltav_{j} (\psi f)
    }_{L^p( B_R  ) } 
    . 
\end{align}
By Bernstein's inequality and the fact that $\supp (\psi f) \subset B(0,R)$, we have 
\begin{align}
    \n{
    \sum_{k\leq k_R,j\leq j_R} \Deltahh_{k} \Deltav_{j}  (\psi f)
    }_{L^p( B_R  ) } 
    &  
    \leq 
    CR^{\f{3}{p}} 
    \n{
    \sum_{k\leq k_R,j\leq j_R} \Deltahh_{k} \Deltav_{j}  (\psi f)
    }_{L^{\infty}( \R^3  ) }  \\
    & 
    \leq 
    C  R^{\f{3}{p}}  2^{2 k_R} 2^{j_R} \n{ \psi f}_{L^1(\R^3)}
    \\
    & \leq 
    C R^3  2^{2 k_R} 2^{j_R} \n{ \psi f}_{L^p(\R^3)} . 
\end{align}
In view of $s,\sigma>0$, we see
\begin{align}
  \n{
  \sum_{k > k_R,j > j_R} \Deltahh_{k} \Deltav_{j} (\psi f) 
  }_{L^p( B_R  ) }
  \leq {}&
   \n{
  \sum_{k > k_R,j > j_R} \Deltahh_{k} \Deltav_{j} (\psi f) 
  }_{L^p( \R^3  ) } \\
  \leq {}&
  C 2^{-s k_R} 2^{- \sigma j_R} \n{\psi f}_{\fB_p^{s,\sigma}} . 
\end{align}
For the other terms, we have 
\begin{align}
    &
    \n{
    \sum_{k \leq k_R,j> j_R} \Deltahh_{k} \Deltav_{j} (\psi f)
    }_{L^p( B_R )}
    \\
    &\quad
    \leq{}
    C
  R^{\frac{2}{p}}  
    \sum_{j>j_R}
    \n{
    \sum_{k \leq k_R}
    \Deltahh_{k} \Deltav_{j} (\psi f)
    }_{L^p( \mathbb{R}_{x_3};L^{\infty}(\mathbb{R}^2_{\xh}) )}
    \\
    &\quad
    \leq{}
    C
   R^{\frac{2}{p}} 
    2^{k_R}
    \sp{
    \n{
    \psi f
    }_{L^p( \mathbb{R}_{x_3};L^1((-R,R)^2) )}
    +
    \sum_{j \geq 1}
    2^{\sigma j}
    \n{
    \Deltav_{j} (\psi f)
    }_{L^p( \mathbb{R}_{x_3};L^1((-R,R)^2) )}}
    \\
    &\quad
    \leq{}
    C
  R^2  
    2^{k_R}
    \sp{
    \n{\psi f}_{L^p(B_R)}
    +
    \sum_{j \geq 1}
    2^{\sigma j}
    \n{
    \Deltav_{j} (\psi f)
    }_{L^p( (-R,R)^2 \times \mathbb{R}_{x_3})} },
\end{align}
and we also see that 
\begin{align}
    &
    \n{
    \sum_{k > k_R, j\leq j_R} \Deltahh_{k} \Deltav_{j} (\psi f)
    }_{L^p( B_R  ) } \\
    &\quad 
    \leq
    C
 R  
    2^{j_R}
    \sp{
    \n{\psi f}_{L^p(B_R)}
    +
    \sum_{k \geq 1} 
    2^{sk}
    \n{\Deltahh_k (\psi f) }_{L^p(\mathbb{R}^2 \times (-R,R))}
    }.
\end{align}
Hence, choosing $k_R$ and $j_R$ so small that 
\begin{align}
    C R^3  2^{2 k_R} 2^{j_R}
    +
    C
    R^{2}  
    2^{k_R}
    +
    C
    R  
    2^{j_R}
    \leq \frac{1}{2},
\end{align}
we have 
\begin{align}
    \n{\psi f}_{\ifB_p^{s,\sigma}}
    \leq{}
    C_R
    \n{\psi f}_{\fB_p^{s,\sigma}}
    &
    +
    C
    \sum_{j \geq 1}
    2^{\sigma j}
    \n{\Deltav_j (\psi f)}_{L^p((-R,R)^2 \times \mathbb{R})}
    \\
    &
    +
    C
    \sum_{k \geq 1} 
    2^{sk}
    \n{\Deltahh_k (\psi f) }_{L^p(\mathbb{R}^2 \times (-R,R))}. 
\end{align}
Invoking again $s>0$, we know that $\dB_{p,1}^s(K)=B_{p,1}^s(K)$ for all compact set $K \subset \mathbb{R}^2$; see \cite{Bah-Che-Dan-11}*{Proposition 2.93}. Hence, we have 
\begin{align}
    \sum_{j \geq 1}
    2^{\sigma j}
    \n{\Deltav_j (\psi f)}_{L^p((-R,R)^2 \times \mathbb{R})}
    & 
    \leq  
    \sum_{j \geq 1}
    \sum_{k \geq 0 } 
    2^{sk} 2^{\sigma j} 
    \n{\iDh_k  \Deltav_j (\psi f)}_{L^p}  
    \\
    & 
    \leq 
    C_R
    \sum_{j \geq 1}
    \sum_{k \in \mathbb{Z}}
    2^{s k}
    2^{\sigma j}
    \n{\Deltahh_k\Deltav_j (\psi f)}_{L^p}
    \leq
    C_R\n{\psi f}_{\fB_p^{s,\sigma}}.
\end{align}
Similarly, it follows from $\sigma>0$ that  
\begin{align}
    \sum_{k \geq 1} 
    2^{sk}
    \n{\Deltahh_k (\psi f) }_{L^p(\mathbb{R}^2 \times (-R,R))}
    \leq
    C_R\n{\psi f}_{\fB_p^{s,\sigma}}.
\end{align}
Thus, we complete the proof.   
\end{proof}

Finally, we investigate the compactness property between the anisotropic inhomogeneous Besov spaces. 
\begin{lemm}\label{lemm:comp}
    Let $1\leq p \leq \infty$, $s_1 \geq 2/p$, $\sigma_1 \geq 1/p$, $0<s_2<s_1$, $0<\sigma_2<\sigma_1$, and $\psi \in \mathscr{S}(\mathbb{R}^3)$.
    Then, the linear map $\ifB_p^{s_1,\sigma_1}(\mathbb{R}^3) \ni f \mapsto \psi f \in \ifB_p^{s_2,\sigma_2}(\mathbb{R}^3)$ is compact.
\end{lemm}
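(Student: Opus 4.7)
\medskip
\noindent\emph{Proof proposal.}
Let $\{f_n\}_{n\in\mathbb{N}}\subset\ifB_p^{s_1,\sigma_1}(\mathbb{R}^3)$ be a bounded sequence. We must extract a subsequence $\{\psi f_{n_m}\}$ that is Cauchy in $\ifB_p^{s_2,\sigma_2}(\mathbb{R}^3)$. Since $\psi$ is Schwartz, $\psi\in\ifB_p^{s_1,\sigma_1}$, and Lemma~\ref{lemm:prod-inhomo} (using $s_1\geq 2/p$, $\sigma_1\geq 1/p$) yields that $\{\psi f_n\}$ is bounded in $\ifB_p^{s_1,\sigma_1}$. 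The plan is to combine a uniform smallness of the high-frequency tail in $\ifB_p^{s_2,\sigma_2}$ (arising from the strict gap $s_1>s_2$, $\sigma_1>\sigma_2$) with a diagonal extraction for each individual low-frequency dyadic block $\iDh_k\iDv_j(\psi f_n)$.

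For the first ingredient, using $s_2\leq s_1$, $\sigma_2\leq\sigma_1$ and $k,j\geq 0$, one easily checks
\begin{align}
\sum_{\substack{k,j\geq 0\\k>K\text{ or }j>J}}
2^{s_2k+\sigma_2 j}\n{\iDh_k\iDv_j(\psi f_n)}_{L^p}
\leq \bigl(2^{-(s_1-s_2)K}+2^{-(\sigma_1-\sigma_2)J}\bigr)\n{\psi f_n}_{\ifB_p^{s_1,\sigma_1}},
\end{align}
which tends to $0$ uniformly in $n$ as $K,J\to\infty$. Thus it suffices to show that for every fixed $(k,j)\in(\mathbb{N}\cup\{0\})^2$ the sequence $\{\iDh_k\iDv_j(\psi f_n)\}_n$ is relatively compact in $L^p(\mathbb{R}^3)$; a standard diagonal extraction over the finite index set $\{(k,j): k\leq K,\,j\leq J\}$ then produces, together with the tail smallness, a Cauchy subsequence in $\ifB_p^{s_2,\sigma_2}$.

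The $L^p$-compactness of each block will be obtained by the Fréchet--Kolmogorov--Riesz criterion applied to $G_n^{k,j}:=\iDh_k\iDv_j(\psi f_n)$. Writing $G_n^{k,j}=K_{k,j}\ast(\psi f_n)$ with the Schwartz kernel $K_{k,j}$ of the fixed anisotropic projector, boundedness in $L^p$ is immediate. Equi-continuity follows from Bernstein's inequality: the Fourier support of $G_n^{k,j}$ is contained in a fixed compact set, so all derivatives of $G_n^{k,j}$ are uniformly bounded in $L^p$. For tightness at infinity, the Besov embedding $\ifB_p^{s_1,\sigma_1}\hookrightarrow L^p$ gives $\n{f_n}_{L^p}\lesssim 1$, hence
\begin{align}
\n{\psi f_n\mathbf{1}_{\{|y|>R/2\}}}_{L^p}\leq \n{\psi\mathbf{1}_{\{|y|>R/2\}}}_{L^\infty}\n{f_n}_{L^p}\xrightarrow{R\to\infty}0
\end{align}
uniformly in $n$ by the Schwartz decay of $\psi$. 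Splitting $\psi f_n$ into its restrictions to $\{|y|\leq R/2\}$ and $\{|y|>R/2\}$ and using Young's convolution inequality together with $\n{K_{k,j}\mathbf{1}_{\{|z|>R/2\}}}_{L^1}\to 0$ yields $\n{G_n^{k,j}\mathbf{1}_{\{|x|>R\}}}_{L^p}\to 0$ uniformly in $n$. The Fréchet--Kolmogorov--Riesz theorem then provides a convergent subsequence.

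The main subtlety I expect is the tightness at infinity for the low-frequency blocks: because $K_{k,j}$ is only Schwartz (not compactly supported) and $f_n$ need not decay pointwise, one cannot avoid the two-scale splitting above. All other steps—the tail smallness and the equi-continuity—are direct consequences of the definitions, Bernstein's inequality, and the product bound of Lemma~\ref{lemm:prod-inhomo}.
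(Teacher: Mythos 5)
Your proposal is correct in substance but takes a genuinely different route from the paper, so let me compare the two. After the common step of exploiting the strict regularity gap $s_2<s_1$, $\sigma_2<\sigma_1$ to make the high-frequency tail uniformly small in $\ifB_p^{s_2,\sigma_2}$, the paper handles the low-frequency blocks by first invoking the Fatou property (Lemma~\ref{lemm:Fatou}) to reduce to the case $f_n\to 0$ in $\mathscr S'$, then reducing to $p=1$ via Bernstein, rewriting $\iDh_k\iDv_j(\psi f_n)(x)$ as a duality pairing of $f_n$ against the translated kernel $\tau_{-\xh}\varphi_{\rm h}(2^k\cdot)\tau_{-x_3}\varphi_3(2^j\cdot)\psi$, observing that this tends to $0$ pointwise, and finishing by dominated convergence together with a claim about integrability of the dual norm of the translated test function. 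You instead apply the Fr\'echet--Kolmogorov--Riesz compactness criterion directly to each block $G_n^{k,j}=K_{k,j}\ast(\psi f_n)$ and then diagonalize over the countable family of blocks. Your route avoids the Fatou lemma entirely and sidesteps the somewhat delicate integrability claim at the end of the paper's proof; it is arguably the more elementary and self-contained argument. Your two-scale splitting for the tightness estimate is exactly the right device and is correctly carried out.

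Two small points of hygiene. First, the Kolmogorov--Riesz criterion in the form you invoke is stated for $1\le p<\infty$, but the lemma permits $p=\infty$; you can either reduce to $p=1$ by Bernstein (as the paper does) before applying compactness, or observe that your equicontinuity and tightness estimates already show each block lies in a precompact subset of $C_0(\mathbb R^3)\subset L^\infty(\mathbb R^3)$ via Arzel\`a--Ascoli. Second, your phrasing ``diagonal extraction over the finite index set $\{(k,j):k\le K,\,j\le J\}$'' should be corrected to a diagonal extraction over all of $(\mathbb N\cup\{0\})^2$, with the finite truncation entering only afterwards when pairing the extracted subsequence with the uniform tail bound to verify the Cauchy property. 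Neither affects the validity of the argument.
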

\begin{rem}
    The assumptions $s_1 \geq 2/p$, $\sigma_1 \geq 1/p$ are not actually necessary.
    In the general case, it suffices to smooth out the functions and use the diagonal process. We omit the detail for this direction since the stronger assumptions are enough for our purpose.
\end{rem}
\begin{proof}
The proof adapts from \cite{Bah-Che-Dan-11}*{Theorem 2.94}. 
Let $\{f_n\}_{n=1}^{\infty}$ be a bounded sequence of $\ifB_p^{s_1,\sigma_1}(\mathbb{R}^3)$. It follows from Lemma \ref{lemm:Fatou} that there exist a subsequence $\{f_{n_m} \}_{m=1}^{\infty}$ and a $f \in \ifB_p^{s_1,\sigma_1}(\mathbb{R}^3)$ such that $f_{n_m}  \to f$ in $\mathscr{S}'(\mathbb{R}^3)$. Hence, the proof of Lemma \ref{lemm:comp} reduces to showing that if $\{f_n\}_{n=1}^{\infty}$ is a bounded sequence of $\ifB_p^{s_1,\sigma_1}(\mathbb{R}^3)$ which converges to $0$ in $\mathscr{S}'(\mathbb{R}^3)$, then $\n{\psi f_n}_{\ifB_p^{s_2,\sigma_2}}$ tends to $0$. 
It holds that 
\begin{align}
    &\n{\psi f_n}_{\ifB_p^{s_2,\sigma_2}}
    \\
    &\quad = \sum_{k,j \in \mathbb{N}\cup \{ 0\} } 
    2^{s_2 k} 2^{\sigma_2 j} \n{ \iDh_k \iDv_j (\psi f_n) }_{L^p} 
    \\
    &\quad
    = \sum_{k \leq k_0} \sum_{j \leq j_0}  2^{s_2 k} 2^{\sigma_2 j}\n{ \iDh_k \iDv_j (\psi f_n) }_{L^p}
    + 
    \sum_{k \leq k_0} \sum_{j > j_0}  2^{s_2 k} 2^{\sigma_2 j} \n{ \iDh_k \iDv_j (\psi f_n) }_{L^p}  
    \\
    & \qquad 
    +   
    \sum_{k >k_0} \sum_{j \leq  j_0}  2^{s_2 k} 2^{\sigma_2 j} \n{ \iDh_k \iDv_j (\psi f_n) }_{L^p}  
    + 
    \sum_{k > k_0} \sum_{j > j_0}  2^{s_2 k} 2^{\sigma_2 j} \n{ \iDh_k \iDv_j (\psi f_n) }_{L^p} .
\end{align}
By Lemma \ref{lemm:prod-inhomo},
we infer that $\{\psi f_n\}_{n=1}^{\infty}$ is bounded in $\ifB_p^{s_1,\sigma_1}(\mathbb{R}^3)$. Then, we have 
\begin{align}
    &
    \sum_{k > k_0} \sum_{j > j_0}  2^{s_2 k} 2^{\sigma_2 j} \n{ \iDh_k \iDv_j (\psi f_n) }_{L^p} 
    \\
    &\quad =
    \sum_{k > k_0} \sum_{j > j_0} 
    2^{-(s_1-s_2)k}
    2^{s_1 k} 
    2^{-(\sigma_1-\sigma_2)j}
    2^{\sigma_1 j} \n{ \iDh_k \iDv_j (\psi f_n) }_{L^p} 
    \\
    &\quad \leq 
    2^{-(s_1-s_2) k_0}
    2^{-(\sigma_1-\sigma_2)j_0}
    \sp{
    \sup_{n \geq 1} \n{\psi f_n}_{\ifB_p^{s_1,\sigma_1}} 
    }\\
    &\quad \leq 
    C 2^{-(s_1-s_2) k_0}
    2^{-(\sigma_1-\sigma_2)j_0}.
\end{align}
Based on the continuous embeddings $\ifB_p^{s_1,\sigma_1}(\mathbb{R}^3)\hookrightarrow \ifB_p^{s_2,\sigma_1}(\mathbb{R}^3)$ and $\ifB_p^{s_1,\sigma_1}(\mathbb{R}^3) \hookrightarrow \ifB_p^{s_1,\sigma_2}(\mathbb{R}^3)$, we infer that 
\begin{align}
\begin{aligned}
    \sum_{k \leq k_0} \sum_{j > j_0}  2^{s_2 k} 2^{\sigma_2 j} \n{ \iDh_k \iDv_j (\psi f_n) }_{L^p}
    &= 
    \sum_{k \leq k_0} \sum_{j > j_0}  2^{s_2 k} 2^{-(\sigma_1-\sigma_2)j}
    2^{\sigma_1 j} \n{ \iDh_k \iDv_j (\psi f_n) }_{L^p} \\
    & \leq 
    2^{-(\sigma_1-\sigma_2)j_0}
    \sp{
   \sup_{n \geq 1} \n{\psi f_n}_{\ifB_p^{s_2,\sigma_1}} 
   }
   \leq C  2^{-(\sigma_1-\sigma_2)j_0},
   \end{aligned}  \\
   \begin{aligned}
       \sum_{k >k_0} \sum_{j \leq  j_0}  2^{s_2 k} 2^{\sigma_2 j} \n{ \iDh_k \iDv_j (\psi f_n) }_{L^p}
       &= 
       \sum_{k >k_0} \sum_{j \leq  j_0}  2^{-(s_1-s_2)k}
       2^{s_1 k} 2^{\sigma_2 j} \n{ \iDh_k \iDv_j (\psi f_n) }_{L^p}
       \\
       & 
       \leq 
       2^{-(s_1-s_2)k_0} 
        \sp{
   \sup_{n \geq 1} \n{\psi f_n}_{\ifB_p^{s_1,\sigma_2}} 
   }
   \leq C   2^{-(s_1-s_2)k_0} .
   \end{aligned}
\end{align}
Hence, we conclude that the proof of Lemma \ref{lemm:comp} reduces to 
\begin{align}\label{eq:comp}
    \lim_{n \to \infty} 
    \n{ \iDh_k \iDv_j (\psi f_n)}_{L^p}=0 \quad 
    \text{for all } k,j \geq 0.  
\end{align}
Indeed, Bernstein's inequality gives
\begin{align}
    \n{ \iDh_k \iDv_j (\psi f_n)}_{L^p}
    \leq C 2^{2 \f{k}{p'}} 2^{\f{j}{p'}}
     \n{ \iDh_k \iDv_j (\psi f_n)}_{L^1}.
\end{align}
In the sequel we will focus on the proof of \eqref{eq:comp} for $p=1$. We only show the case for $k,j\geq 1$ since the other cases are treated similarly. We define the translation operator $\tau_a:f \mapsto f(\cdot -a)$ and for simplicity set $\varphi_{\rm h}:=\mathscr{F}^{-1}_{\R^2}[\phi(|\xih|)]$, $\varphi_3:=\mathscr{F}^{-1}_{\R}[\phi(|\xi_3|)]$, where $\phi:[0,\infty) \to \R$ is the function used for the definition of the Littlewood--Paley decomposition. 
Observe that 
\begin{align}
    \iDh_k \iDv_j (\psi f_n) (x)
    &=
    2^{2k} 2^{j} \int_{\R^3} \varphi_{\rm h}( 2^k (\xh-y_{\rm h})  )
    \varphi_3( 2^j  (x_3-y_3)  )  \psi(y) f_n(y) dy \\
    & =
     2^{2k} 2^{j} 
     \lr{
     f_n, \tau_{-\xh} \varphi_{\rm h} (2^k \cdot)  \tau_{-x_3} 
     \varphi_3 (2^j \cdot) 
     \psi 
     }.
\end{align}
Recalling that $\{f_n\}_{n=1}^{\infty}$ converges to $0$ in $\mathscr{S}'(\mathbb{R}^3)$, we infer from the above equality that $\iDh_k \iDv_j (\psi f_n)$ tends to $0$ pointwise as $n\to \infty$. Furthermore, we see that 
\begin{align}
  &  \abso{ \iDh_k \iDv_j (\psi f_n) (x)}
  \\
  & \quad 
    \leq 
    C  2^{2k} 2^{j} 
  \sp{
  \sup_{n \geq 1} \n{ f_n}_{\ifB_p^{s_1,\sigma_1}}
  } 
  \n{
  \tau_{-\xh} \varphi_{\rm h} (2^k \cdot)  \tau_{-x_3} 
     \varphi_3 (2^j \cdot) 
     \psi 
  }_{\ifB_{p',\infty}^{-s_1,-\sigma_1}}. 
\end{align}
Here, we remark that the anisotropic Besov space $\ifB_{p',\infty}^{-s_1,-\sigma_1}(\R^3)$ is defined as Definition \ref{def:Besov} with the sequence-norm $\ell^1(\Z^2)$ replaced by $\ell^{\infty}(\Z^2)$; see \cite{FL-24} for more details. Due to Lebesgue's dominated convergence theorem, showing \eqref{eq:comp} for $p=1$ reduces to the following claim.
\begin{claim}
    Let $s,\sigma\in \R$, $1\leq p,q \leq \infty$, $g\in \mathscr{S}(\mathbb{R}^2_{\xh})$, $h\in \mathscr{S}(\mathbb{R}_{x_3})$, and $v \in \mathscr{S}(\mathbb{R}^3)$. Then, the function $\mathbb{R}^3 \ni (\xh,x_3) \mapsto \n{ (\tau_{\xh} g) (\tau_{x_3} h)v}_{\ifB_{p,q}^{s,\sigma} } \in \mathbb{R}$
    belongs to $L^1(\R^3)$.
\end{claim}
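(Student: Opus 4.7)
The plan is to dominate the anisotropic Besov norm by a Schwartz seminorm, and then exploit the rapid decay in $(\xh, x_3)$ of all Schwartz seminorms of the translated product $F_{\xh,x_3}(y) := g(y_{\rm h} - \xh)\, h(y_3 - x_3)\, v(y)$.

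\textbf{Step 1: Schwartz seminorms of $F_{\xh,x_3}$ decay rapidly.} For each multi-index $\alpha$ and $N \in \mathbb{N}$, since $g,h,v \in \mathscr{S}$,
\begin{align}
    |\partial_y^\alpha F_{\xh,x_3}(y)|
    \leq
    C_{\alpha,N}\, \lr{y_{\rm h}-\xh}^{-2N}\lr{y_3-x_3}^{-2N}\lr{y}^{-2N}.
\end{align}
For any $y \in \R^3$ either $|y_{\rm h}| \geq |\xh|/2$ or $|y_{\rm h} - \xh| \geq |\xh|/2$, so $\lr{y_{\rm h}-\xh}^{-N}\lr{y}^{-N} \leq C\lr{\xh}^{-N}\lr{y}^{-N}$; likewise for the $x_3$ direction. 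Hence
\begin{align}
    |\partial_y^\alpha F_{\xh,x_3}(y)|
    \leq
    C_{\alpha,N}\, \lr{\xh}^{-N}\lr{x_3}^{-N}\lr{y}^{-N},
\end{align}
so \emph{every} Schwartz seminorm of $F_{\xh,x_3}$ decays faster than any polynomial in $(\xh,x_3)$.

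\textbf{Step 2: The Besov norm is controlled by a Schwartz seminorm.} For $u \in \mathscr{S}(\R^3)$, Young's inequality for convolution and $L^1$-boundedness of the kernels of $\iDh_0, \iDv_0$ give $\|\iDh_0 \iDv_0 u\|_{L^p} \leq C\|u\|_{L^p}$. For $k \geq 1$, the Fourier support of $\iDh_k$ is an annulus $|\xi_{\rm h}| \sim 2^k$, so one can factor $\phi^{\rm h}_k(\xi_{\rm h}) = \phi^{\rm h}_k(\xi_{\rm h}) |\xi_{\rm h}|^{-2L}\cdot |\xi_{\rm h}|^{2L}$ to obtain
\begin{align}
    \|\iDh_k u\|_{L^p} \leq C\, 2^{-2kL}\,\|(-\Deltah)^L u\|_{L^p},
\end{align}
for any $L \geq 0$, and analogously $\|\iDv_j u\|_{L^p} \leq C\, 2^{-2jL'}\,\|\partial_{x_3}^{2L'} u\|_{L^p}$ for $j \geq 1$. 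Splitting the sum over $\{k,j \geq 0\}$ into the four cases $(k=0\text{ or }\geq 1)\times(j=0\text{ or }\geq 1)$ and choosing $L > |s|/2$, $L' > |\sigma|/2$, the geometric series in $k,j$ converge in any $\ell^q$, so
\begin{align}
    \|u\|_{\ifB^{s,\sigma}_{p,q}}
    \leq
    C \sum_{0 \leq |\alpha| \leq M} \|\partial_y^\alpha u\|_{L^p}
    \leq
    C' \sup_{|\alpha|\leq M} \bigl\|\lr{y}^{4/p}\partial_y^\alpha u \bigr\|_{L^\infty},
\end{align}
for some $M = M(s,\sigma,p,q) \in \N$.

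\textbf{Step 3: Conclusion.} Applying the estimate of Step 2 to $u = F_{\xh,x_3}$ and using Step 1 with $N$ chosen larger than $M+4/p$,
\begin{align}
    \n{(\tau_{\xh}g)(\tau_{x_3}h)v}_{\ifB^{s,\sigma}_{p,q}}
    =
    \|F_{\xh,x_3}\|_{\ifB^{s,\sigma}_{p,q}}
    \leq
    C_N\, \lr{\xh}^{-N}\lr{x_3}^{-N}
\end{align}
for arbitrarily large $N$. Picking $N > 3$ shows the map $(\xh,x_3) \mapsto \|F_{\xh,x_3}\|_{\ifB^{s,\sigma}_{p,q}}$ belongs to $L^1(\R^3)$.

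\textbf{Main obstacle.} There is no deep obstacle; both pieces are standard once the right decomposition is performed. The only care needed is in Step 2, namely quantifying how many derivatives $M$ suffice so that the $\ell^q(\Z^2_{\geq 0})$ sums converge uniformly, ensuring the constant $C$ is finite for all admissible $s,\sigma,p,q$ regardless of the sign of $s$ and $\sigma$.
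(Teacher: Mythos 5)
Your argument is correct and is precisely the standard proof of \cite{Bah-Che-Dan-11}*{Lemma 2.95} transplanted to the anisotropic setting; the paper omits the proof and defers to exactly that lemma, so you have supplied the details it leaves to the reader. The one imprecise line is the displayed Peetre-type bound in Step~1 (which as written fails on the set $|y_{\rm h}|\ge|\xh|/2$); the clean version is $\lr{y_{\rm h}-\xh}^{-2N}\lr{y}^{-2N}\leq \lr{y_{\rm h}-\xh}^{-N}\lr{y_{\rm h}}^{-N}\lr{y}^{-N}\leq C\lr{\xh}^{-N}\lr{y}^{-N}$, i.e.\ spend half the powers to produce $\lr{\xh}^{-N}$ while keeping $\lr{y}^{-N}$, and likewise for $x_3$, which yields the stated conclusion.
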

We notice that the proof of the above claim follows in the same manner as \cite{Bah-Che-Dan-11}*{Lemma 2.95}, and the details are thus omitted. This completes the proof of Lemma \ref{lemm:comp}. 
\end{proof}

\section{Global analytic a priori estimates}\label{sec:glo-est}
In this section, we provide the global space-time analytic a priori estimates of the solutions to \eqref{eq:ANS}, which plays the central role in the proof of Theorem \ref{thm:anal}. 
\subsection{Notations on analytic quantities}
To begin with, we prepare some notations used in the sequel.
For $p$ and $\theta$ satisfying \eqref{p-theta},
we set two subsets of $\mathbb{R}^2$ as
\begin{align}\label{eq:Lambda}
    &
    \Lambda^{\rm h}_{p,\theta}:=
    \left\{  
    \sp{ \f{2}{p}-1,\f{1}{p} }, 
    \sp{ \f{2}{p}-1+\theta,\f{1}{p}-\theta }, 
    \sp{ \f{2}{p}-2+\theta,\f{1}{p}-\theta }
    \right\},
    \\
    &
    \Lambda^{\rm v}_{p,\theta}:=
    \left\{  
    \sp{ \f{2}{p}-1,\f{1}{p} }, 
    \sp{ \f{2}{p}-2+\theta,\f{1}{p}-\theta }
    \right\}.
\end{align}  
For $M \in \mathbb{N}\cup \{0\}$, $s,\sigma\in \mathbb{R}$, $1\leq p \leq \infty$, $\lambda>0$, a non-negative and non-increasing $C^1$-function $g=g(t)$ on $[0,T)$, and for vector fields $v_0=v_0(x):\mathbb{R}^3 \to \mathbb{R}^3$ and $v=v(t,x):[0,\infty) \times \mathbb{R}^3 \to \mathbb{R}^3$, we define
\begin{align}
    &
    v^{\alpha,\beta}_{{\rm h};r;g}  (t)
    :=
    \frac{  
    r^{\alpha+|\betah|}  t^{\alpha+\f{|\betah|}{2}}  
    \rho_0^{\beta_3} 
    e^{-\beta_3 g(t)}
    }{\alpha!\beta!}
    \partial_{t}^{\alpha}  
    \partial_{x}^{\beta} 
    v_{\rm h}(t)
    ,
    \\
    &
    v^{\alpha,\betah}_{{\rm h};r}  (t)
    :=
    \frac{  r^{\alpha+|\betah|}  t^{\alpha+\f{|\betah|}{2}}   }{\alpha!\betah!}
    \partial_{t}^{\alpha}  
    \partial_{\xh}^{\betah} 
    v_{\rm h}(t),
    \\
    &
    v^{\alpha,\betah}_{3;r}(t)
    :=
    \frac{r^{\alpha+|\betah|} t^{\alpha+\f{|\betah|}{2}} }{\alpha!\betah!}
    \partial_{t}^{\alpha}  
    \partial_{\xh}^{\betah}  
    v_3(t),
\end{align}
and
\begin{align}
    v_{0,{\rm h}}^{\beta_3}:=\frac{ 
    \rho_0^{\beta_3} 
    }
    {
    \beta_3!
    }
    \partial_{x_3}^{\beta_3}v_{0,{\rm h}}.
\end{align} 
Moreover, for a scalar function $q: (0,\infty) \times \mathbb{R}^3 \to \mathbb{R}$, we set
\begin{align}
    &q^{\alpha,\beta}_{r;g}(t)
     :=
    \frac{
    r^{\alpha+|\betah|}t^{\alpha+\f{|\betah|}{2}}
    \rho_0^{\beta_3} 
    e^{-\beta_3 g(t)}
    }{\alpha!\beta!}
    \partial_{t}^{\alpha}  
    \partial_x^{\beta} q(t) 
    ,
    \\
    &q^{\alpha,\betah}_{r}(t)
     :=
    \frac{r^{\alpha+|\betah|}t^{\alpha+\f{|\betah|}{2}}  }{\alpha!\betah!}
    \partial_{t}^{\alpha}  
    \partial_{\xh}^{\betah} q(t).
\end{align}
Using these notations, we further introduce the following norms:
\begin{align}
    &
    \n{ v_{0,\rm h}}_{\widetilde{\mathcal{D}}_p^{s,\sigma} }
    :=
    \sum_{\beta_3 =0}^{\infty}
    \n{v^{\beta_3}_{0,\rm h}}_{\fB^{s,\sigma}_p}
    =
    \sum_{\beta_3 =0}^{\infty}
    \frac{\rho_0^{\beta_3}}{\beta_3!}
    \n{\partial_{x_3}^{\beta_3} v_{0,\rm h}}_{\fB^{s,\sigma}_p},\\
    &
    \begin{aligned}
    \n{v_{\rm h}}_{\mathcal{H}^{s,\sigma;M}_{p;r;g}(0,T)}
    :={}
    &
    \sum_{
    \substack{0 \leq \alpha \leq M \\ \beta \in (\mathbb{N}\cup \{ 0 \})^3}
    }
    \n{v^{\alpha,\beta}_{{\rm h};r;g}}_{{L^{\infty}}(0,T;\fB^{s,\sigma}_p) \cap L^1(0,T;\fB^{2+s,\sigma}_p)}
    \\
    &
    +
    \sum_{
    \substack{0 \leq \alpha \leq M \\ \beta \in (\mathbb{N}\cup \{ 0 \})^3}
    }
    \beta_3
    \n{ v^{\alpha,\beta}_{{\rm h};r;g}}
    _{L^1_{g'}(0,T;\fB^{s,\sigma}_p)}
    , 
    \end{aligned}
    \\
    &
    \n{v_3}_{\mathcal{V}^{s,\sigma;M}_{p;r}(0,T)}
    :=
    \sum_{
    \substack{0 \leq \alpha \leq M \\ \betah \in (\mathbb{N}\cup \{ 0 \})^2}
    }  
    \n{v^{\alpha,\betah}_{3;r}}_{{L^{\infty}}(0,T;\fB^{s,\sigma}_p)\cap L^1(0,T;\fB^{2+s,\sigma}_p)} 
\end{align} 
Moreover, we set 
\begin{align}
& 
\n{v_{\rm h}}_{\mathcal{H}^M_{p,\theta;r;g}(0,T)}
:= 
\sum_{(s,\sigma)\in \Lambda^{\rm h}_{p,\theta}}
\n{v_{\rm h}}_{\mathcal{H}^{s,\sigma;M}_{p;r;g}(0,T)}, \\
& 
\n{v_3}_{\mathcal{V}^{M}_{p,\theta;r}(0,T)}
:= 
\sum_{(s,\sigma)\in \Lambda^{\rm v}_{p,\theta}}
\n{v_3}_{\mathcal{V}^{s,\sigma;M}_{p;r}(0,T)} . 
\end{align}
Note that there holds
\begin{align}
    &
    \n{v_{0,\rm h}}_{\mathcal{D}_{p,\theta}^{\rm h}}
    = 
    \sum_{(s,\sigma)\in \Lambda^{\rm h}_{p,\theta}}
    \n{v_{0,\rm h}}_{\widetilde{\mathcal{D}}_{p}^{s,\sigma} }
    =
    \sum_{\beta_3 =0}^{\infty}
    \frac{\rho_0^{\beta_3}}{\beta_3!}
    \n{\partial_{x_3}^{\beta_3}v_{0,\rm h}}_{D_{p,\theta}^{\rm h}},\\
    &
    \n{v_{0,3}}_{D_{p,\theta}^{\rm v}}
    = 
    \sum_{(s,\sigma)\in \Lambda^{\rm v}_{p,\theta}}
    \n{v_{0,3}}_{\fB_{p}^{s,\sigma} },\\
    & 
    \n{v_{\rm h}}_{\mathcal{H}^{M}_{p,\theta;r;\log 2}(0,\infty)}
    =
    \sum_{
    \substack{0 \leq \alpha \leq M \\ \beta \in (\mathbb{N}\cup \{ 0 \})^3}
    }
    \frac{
    r^{\alpha+|\betah|}(\rho_0/2)^{\beta_3}  
    }
    {
    \alpha!\beta!
    }
    \n{
    t^{\alpha+\frac{|\beta_{\rm h}|}{2}}
    \partial_{t}^{\alpha}\partial_{x}^{\beta}
    u_{\rm h} 
    }_{S^{\rm h}_{p,\theta}},
    \\
    & 
    \n{v_3}_{\mathcal{V}^{M}_{p,\theta;r}(0,\infty)}
    =
    \sum_{
    \substack{0 \leq \alpha \leq M \\ \betah \in (\mathbb{N}\cup \{ 0 \})^2}
    }
    \frac{r^{\alpha+|\betah|}}{\alpha!\betah!}
    \n{
    t^{\alpha+\frac{|\beta_{\rm h}|}{2}}\partial_{t}^{\alpha}\partial_{\xh}^{\betah}
    u_3
    }_{S^{\rm v}_{p,\theta}}.
\end{align}
\subsection{Analytic estimates for the linear solutions}
Here we focus on the linearized system given as follows:
\begin{align}\label{eq:L}
    \begin{cases}
        \partial_t u - \Deltah u = F, \quad & t>0,x \in \R^3,\\
        u(0,x)=u_0(x), \quad & x \in \R^3,
    \end{cases}
\end{align}
where $u=u(t,x):[0,\infty) \times \R^3 \to \R^3$ is unknown velocity field and $F=F(t,x):[0,\infty) \times \R^3 \to \R^3$ and $u_0=u_0(x):\R^3 \to \R^3$ is the given external force and initial data, respectively.
\begin{prop}\label{prop:anal-lin}
    There exists an absolute positive constant $C$ such that for any $1 \leq p \leq \infty$, $s,\sigma \in \R$, $0<r\leq 1$, $T>0$, 
    and 
    a non-negative and non-decreasing function $g \in C^1([0,T))$ with $g(0)=0$,
    the solution $u$ to \eqref{eq:ANS} with the initial data $u_0=(u_{0,{\rm h}},u_{0,3}) \in \widetilde{\mathcal{D}}_p^{s,\sigma}(\R^3) \times \fB_p^{s,\sigma}(\R^3)$ satisfies
    \begin{align}
        &
        \begin{aligned}
        \n{u_{\rm h}}_{\mathcal{H}_{p;r;g}^{s,\sigma;M}(0,T)}
        \leq 
        C
        \n{u_{0,{\rm h}}}_{\widetilde{\mathcal{D}}_p^{s,\sigma}}
        &+
        Cr
        \n{u_{\rm h}}_{\mathcal{H}_{p;r;g}^{s,\sigma;M}(0,T)}
        \\
        &+
        C
        \sum_{
        \substack{0 \leq \alpha \leq M \\ \beta \in (\mathbb{N}\cup \{ 0 \})^3}
        }
        \n{F^{\alpha,\beta}_{{\rm h};r;g}}_{L^1(0,T;\fB^{s,\sigma}_p)},
        \end{aligned}
        \\
        &
        \begin{aligned}
        \n{u_3}_{\mathcal{V}_{p;r}^{s,\sigma;M}(0,T)}
        \leq 
        C
        \n{u_{0,3}}_{\fB_p^{s,\sigma}}
        &
        +
        Cr
        \n{u_3}_{\mathcal{V}_{p;r}^{s,\sigma;M}(0,T)}
        \\
        &
        +
        C
        \sum_{
        \substack{0 \leq \alpha \leq M \\ \betah \in (\mathbb{N}\cup \{ 0 \})^2}
        }
        \n{F^{\alpha,\betah}_{3;r}}_{L^1(0,T;\fB^{s,\sigma}_p)}.
        \end{aligned}
    \end{align}
\end{prop}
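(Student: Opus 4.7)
The plan is to apply maximal regularity for the horizontal heat operator $\partial_t - \Deltah$ in the anisotropic Besov spaces $\fB^{s,\sigma}_p(\mathbb{R}^3)$ to the weighted quantity $v := u^{\alpha,\beta}_{{\rm h};r;g}$. A direct differentiation of the linear equation in time ($\alpha$ times) and in space ($\beta$ times), multiplied by the analytic weight, produces
\begin{equation*}
\partial_t v - \Deltah v + \beta_3 g'(t)\,v = F^{\alpha,\beta}_{{\rm h};r;g} + \frac{\alpha+|\betah|/2}{t}\,v,
\end{equation*}
with initial condition $v|_{t=0} = u^{\beta_3}_{0,{\rm h}}$ when $\alpha = |\betah| = 0$ and $v|_{t=0}=0$ otherwise. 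The horizontal-heat maximal regularity---obtained by Littlewood--Paley localization and Bernstein's inequality, since $-\Deltah$ acts as a multiplier of order $2^{2k}$ on each block $\Deltahh_k$ producing the smoothing factor $e^{-ct\,2^{2k}}$, plus the nonnegative damping $\beta_3 g'(t)\geq 0$---then gives for each $(\alpha,\beta)$
\begin{equation*}
\|v\|_{L^{\infty}(0,T;\fB^{s,\sigma}_p) \cap L^{1}(0,T;\fB^{s+2,\sigma}_p)} + \beta_3 \|v\|_{L^{1}_{g'}(0,T;\fB^{s,\sigma}_p)} \leq C\|v(0)\|_{\fB^{s,\sigma}_p} + C\|\mathrm{RHS}\|_{L^{1}(0,T;\fB^{s,\sigma}_p)}.
\end{equation*}

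After summing over $(\alpha,\beta)$, the initial data and source pieces directly reproduce $C\|u_{0,{\rm h}}\|_{\widetilde{\mathcal{D}}_p^{s,\sigma}}$ and $C\sum \|F^{\alpha,\beta}_{{\rm h};r;g}\|_{L^1(0,T;\fB^{s,\sigma}_p)}$, so the central task is to bound the singular contribution $\sum\|(\alpha+|\betah|/2)\,t^{-1}v\|_{L^{1}(0,T;\fB^{s,\sigma}_p)}$ by $Cr\|u_{\rm h}\|_{\mathcal{H}^{s,\sigma;M}_{p;r;g}(0,T)}$ up to the $F$-term. Following the idea of Iwabuchi, for $\alpha\geq 1$ I invoke the linear equation to substitute $\partial_t^\alpha \partial_x^\beta u_{\rm h} = \Deltah\partial_t^{\alpha-1}\partial_x^\beta u_{\rm h} + \partial_t^{\alpha-1}\partial_x^\beta F_{\rm h}$, yielding the identity
\begin{equation*}
\frac{\alpha}{t}\,u^{\alpha,\beta}_{{\rm h};r;g} = r\,\Deltah u^{\alpha-1,\beta}_{{\rm h};r;g} + r\,F^{\alpha-1,\beta}_{{\rm h};r;g},
\end{equation*}
whose $L^{1}(0,T;\fB^{s,\sigma}_p)$ norm, after reindexing $\alpha\to\alpha-1$, sums to $Cr\|u_{\rm h}\|_{\mathcal{H}^{s,\sigma;M}_{p;r;g}(0,T)} + C\sum\|F^{\alpha,\beta}_{{\rm h};r;g}\|_{L^{1}(0,T;\fB^{s,\sigma}_p)}$. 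For $\alpha=0$, $|\betah|\geq 2$, a similar reduction $\partial_x^\beta = \partial_{x_i}\partial_x^{\beta-e_i}$ together with the interpolation from Lemma~\ref{lemm:inter} yields an analogous bound, exploiting that $u^{0,\beta-e_i}_{{\rm h};r;g}$ vanishes to order $t^{(|\betah|-1)/2}$ at $t=0$ to tame the residual $t^{-1/2}$ factor.

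I expect the main obstacle to be the borderline case $\alpha=0$, $|\betah|=1$, since then the time weight $t^{-1/2}$ is not compensated by vanishing at $t=0$: $u^{0,(0,0,\beta_3)}_{{\rm h};r;g}|_{t=0} = u^{\beta_3}_{0,{\rm h}}$ need not vanish. To circumvent this I abandon the PDE and return to the Duhamel form
\begin{equation*}
u^{0,e_i+(0,0,\beta_3)}_{{\rm h};r;g}(t) = \frac{r\,\rho_0^{\beta_3}e^{-\beta_3 g(t)}}{\beta_3!}\,t^{1/2}\partial_{x_i}\Bigl[e^{t\Deltah}\partial_{x_3}^{\beta_3}u_{0,{\rm h}} + \int_0^t e^{(t-\tau)\Deltah}\partial_{x_3}^{\beta_3}F_{\rm h}(\tau)\,d\tau\Bigr],
\end{equation*}
and split $\int_0^t = \int_0^{t/2} + \int_{t/2}^t$: on $[0,t/2]$ the bound $t\leq 2(t-\tau)$ lets the smoothing $\|(t-\tau)^{1/2}\partial_{x_i}e^{(t-\tau)\Deltah}f\|_{\fB^{s,\sigma}_p}\leq C\|f\|_{\fB^{s,\sigma}_p}$ absorb the factor $t^{1/2}\partial_{x_i}$; on $[t/2,t]$ the inequality $t^{1/2}\leq\sqrt{2}\,\tau^{1/2}$ transfers the time weight onto the source, producing a term controlled by $\|F^{0,e_i+(0,0,\beta_3)}_{{\rm h};r;g}\|_{L^{1}(0,T;\fB^{s,\sigma}_p)}$. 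Choosing $r$ sufficiently small absorbs $Cr\|u_{\rm h}\|_{\mathcal{H}^{s,\sigma;M}_{p;r;g}(0,T)}$ into the left-hand side. The argument for $u_3$ runs identically, with $\beta_3=0$ and no vertical analytic weight.
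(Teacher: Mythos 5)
Your proposal reproduces the paper's architecture accurately: the weighted equation with the $\beta_3 g'(t)$ friction, a maximal-regularity lemma for the horizontal heat equation with this friction (Lemma~\ref{lemm:max-heat}), the PDE-substitution identity $\frac{\alpha}{t}u^{\alpha,\beta}_{{\rm h};r;g}=r\,\Deltah u^{\alpha-1,\beta}_{{\rm h};r;g}+r\,F^{\alpha-1,\beta}_{{\rm h};r;g}$ for the $\alpha$-coefficient when $\alpha\geq 1$, and the Duhamel split $\int_0^t=\int_0^{t/2}+\int_{t/2}^t$ for the exceptional case $(\alpha,|\betah|)=(0,1)$. These steps all coincide with what the paper does.

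The gap is in the treatment of the $|\betah|/2$-coefficient for $|\betah|\geq 2$. You propose peeling off a single horizontal derivative, $\partial_x^\beta=\partial_{x_i}\partial_x^{\beta-e_i}$, and then using interpolation (Lemma~\ref{lemm:inter}) together with ``vanishing to order $t^{(|\betah|-1)/2}$'' to tame the leftover $t^{-1/2}$. This does not close as stated: from the available control $u^{0,\beta-e_i}_{{\rm h};r;g}\in L^\infty(0,T;\fB^{s,\sigma}_p)\cap L^1(0,T;\fB^{s+2,\sigma}_p)$, pointwise interpolation plus Cauchy--Schwarz produces $\int_0^T t^{-1}\|u^{0,\beta-e_i}_{{\rm h};r;g}\|_{\fB^{s,\sigma}_p}\,dt$, which the $L^\infty$-in-time bound cannot control; the norms used are not of Chemin--Lerner type and provide no pointwise decay rate of the Besov norm near $t=0$, so ``vanishing at $t=0$'' is not something the a priori bounds give you. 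The paper's resolution is to peel off \emph{two} horizontal derivatives, choosing $\gamma_{\rm h}$ with $|\gamma_{\rm h}|=2$, $\betah-\gamma_{\rm h}\geq 0$, and $|\betah|/\betah!\leq C/(\betah-\gamma_{\rm h})!$: then the remaining time weight $t^{\alpha+(|\betah|-2)/2}$ matches $u^{\alpha,(\betah-\gamma_{\rm h},\beta_3)}_{{\rm h};r;g}$ exactly, the two derivatives are absorbed by the $L^1(0,T;\fB^{s+2,\sigma}_p)$ component of the $\mathcal{H}$-norm, and no interpolation or vanishing argument is required. (Indeed, if you simply unwind the time weight of $u^{0,\beta-e_i}_{{\rm h};r;g}$ in your own formula, $t^{-1/2}\nabla u^{0,\beta-e_i}_{{\rm h};r;g}$ turns into $r\nabla_{\rm h}^2 u^{0,\beta-\gamma_{\rm h}}_{{\rm h};r;g}$ up to constants, which is exactly the two-derivative version.) One further caveat: the two-derivative peeling must be applied for \emph{all} $\alpha\geq 0$ with $|\betah|\geq 2$, not only $\alpha=0$; for $\alpha\geq 1$ the PDE substitution kills the factor $\alpha$ but leaves the unbounded factor $|\betah|/2$, which still needs the peeling.
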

To show Proposition \ref{prop:anal-lin}, we prepare the following lemma.
\begin{lemm}\label{lemm:max-heat}
Let $1\leq p\leq \infty$, $s,\sigma\in \R$, $0<T\leq \infty$, $\beta \geq 0$. Consider the anisotropic heat equation with a friction term
\begin{align}
        \begin{cases}
            \partial_t v + \beta g'(t)v - \Deltah v = G, & t>0,x \in \mathbb{R}^3,\\
            v(0,x) = v_0(x), & x \in \mathbb{R}^3,
        \end{cases}
\end{align}
where $v_0\in \fB^{s,\sigma}_{p}(\R^3)$, a non-negative and non-decreasing function $g \in C^1([0,T))$ with $g(0)=0$, and $G \in L^1(0,T;\fB^{s,\sigma }_{p}(\R^3))$ are given. 
Then, there exists an absolute positive constant $C$ such that 
\begin{align}
    \| v \|_{ 
    L^{\infty}(0,T;\fB^{s ,\sigma}_{p})
    \cap 
    L^1(0,T;\fB_p^{s+2,\sigma})
    }
    +
    \beta
    \n{v}_{{L^1_{g'}}(0,T;\fB_p^{s,\sigma})}
    \leq
    C
    \n{v_0}_{\fB_p^{s,\sigma}}
    +
    C
    \n{G}_{L^1(0,T;\fB_p^{s,\sigma})}.
\end{align}
\end{lemm}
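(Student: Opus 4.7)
The plan is to localize in frequency, absorb the friction term by a time-dependent change of unknown, and then apply the standard heat-kernel bounds on each dyadic block.

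First, I apply $\Deltahh_k \Deltav_j$ to the equation and write $v_{k,j}:=\Deltahh_k\Deltav_j v$, $G_{k,j}:=\Deltahh_k\Deltav_j G$, $v_{0,k,j}:=\Deltahh_k\Deltav_j v_0$. Each block satisfies
\begin{align}
    \partial_t v_{k,j} + \beta g'(t) v_{k,j} - \Deltah v_{k,j} = G_{k,j}, \qquad v_{k,j}(0)=v_{0,k,j}.
\end{align}
Setting $w_{k,j}(t):=e^{\beta g(t)} v_{k,j}(t)$, the friction is eliminated and $w_{k,j}$ solves $\partial_t w_{k,j}-\Deltah w_{k,j}=e^{\beta g(t)}G_{k,j}$. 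Duhamel's formula then yields
\begin{align}
    v_{k,j}(t) = e^{-\beta g(t)} e^{t\Deltah} v_{0,k,j} + \int_0^t e^{-\beta(g(t)-g(\tau))} e^{(t-\tau)\Deltah} G_{k,j}(\tau)\,d\tau.
\end{align}

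Next, since $v_{k,j}(\tau,\cdot)$ has horizontal frequencies supported in $|\xih|\sim 2^k$, Bernstein-type estimates give $\| e^{t\Deltah} v_{k,j}(\tau)\|_{L^p}\leq C e^{-c 2^{2k}t}\|v_{k,j}(\tau)\|_{L^p}$ for some absolute constants $c,C>0$. Combining this with the Duhamel formula and the pointwise bound $e^{-\beta(g(t)-g(\tau))}\leq 1$, one immediately obtains, after taking sup in $t$,
\begin{align}
    \sup_{0<t<T}\|v_{k,j}(t)\|_{L^p} \leq C\|v_{0,k,j}\|_{L^p} + C\|G_{k,j}\|_{L^1(0,T;L^p)},
\end{align}
while a Young-type convolution estimate in $t$ together with $\int_0^\infty 2^{2k} e^{-c 2^{2k}t}\,dt = c^{-1}$ gives
\begin{align}
    2^{2k}\int_0^T \|v_{k,j}(t)\|_{L^p}\,dt \leq C\|v_{0,k,j}\|_{L^p}+C\|G_{k,j}\|_{L^1(0,T;L^p)}.
\end{align}

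For the damping-weighted term, I use $\beta g'(t)\,e^{-\beta g(t)}=-\frac{d}{dt} e^{-\beta g(t)}$ to get
\begin{align}
    \beta\int_0^T g'(t) e^{-\beta g(t)} dt = 1-e^{-\beta g(T)}\leq 1,
\end{align}
and Fubini together with the same identity applied on $[\tau,T]$ gives
\begin{align}
    \beta\int_0^T g'(t)\int_0^t e^{-\beta(g(t)-g(\tau))}\|G_{k,j}(\tau)\|_{L^p}\,d\tau\,dt \leq \|G_{k,j}\|_{L^1(0,T;L^p)}.
\end{align}
Hence $\beta\|v_{k,j}\|_{L^1_{g'}(0,T;L^p)}\leq C\|v_{0,k,j}\|_{L^p}+C\|G_{k,j}\|_{L^1(0,T;L^p)}$.

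Finally, multiplying the three block estimates by $2^{sk}2^{\sigma j}$ and summing over $k,j\in\mathbb{Z}$ produces the desired inequality. The only delicate point in the argument is the weighted term, and the calculation above reduces it to Fubini plus the elementary identity $\beta\int g'(t)e^{-\beta g(t)}dt\leq 1$; there is no real obstacle.
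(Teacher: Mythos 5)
Your proposal is correct and follows essentially the same approach as the paper: both derive the friction-damped Duhamel formula $v_{k,j}(t) = e^{-\beta g(t)}e^{t\Deltah}v_{0,k,j} + \int_0^t e^{-\beta(g(t)-g(\tau))}e^{(t-\tau)\Deltah}G_{k,j}(\tau)\,d\tau$, use the exponential decay of $e^{t\Deltah}$ on horizontal dyadic blocks for the $L^\infty$ and $L^1$-in-time estimates, and reduce the $L^1_{g'}$ estimate to Fubini plus the total derivative $\beta g'(t)e^{-\beta(g(t)-g(\tau))} = -\tfrac{d}{dt}e^{-\beta(g(t)-g(\tau))}$. The only cosmetic difference is that you introduce the substitution $w_{k,j}=e^{\beta g(t)}v_{k,j}$ to derive the Duhamel formula, while the paper writes it down directly.
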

\begin{proof}
We easily see that the solution formula of $v$ is given by 
\begin{align}\label{eq:sol-u}
v(t)= e^{ -\beta  g(t)  } e^{t \Deltah} v_0
+\int_0^t  e^{ (t-\tau) \Deltah} 
e^{ -\beta (g(t)-g(\tau))  } G(\tau) d\tau. 
\end{align}
Recalling that $g'$ is non-negative, \eqref{eq:sol-u} and the same arguments as \cite{Bah-Che-Dan-11}*{Lemma 2.4} imply that for any $1\leq r \leq \infty$ 
\begin{align}\label{eq:sol-u-1}
    \begin{split}
    \n{ \Deltahh_k \Deltav_j v(t) }_{ L^r (0,T; L^p) } 
    & \leq C 
    \n{ \Deltahh_k \Deltav_j  e^{t \Deltah} v_0 }_{ L^r (0,T; L^p) } \\
    & \quad 
    +  
    C \n{  
    \int_0^t \n{\Deltahh_k \Deltav_je^{ (t-\tau) \Deltah} G(\tau)}_{L^p} d\tau
    }_{ L^r (0,T) } \\
    & \leq C 
    \n{ e^{-c2^{2k}t}{\Deltahh_k \Deltav_j  v_0}_L^p }_{ L^r (0,T) } \\
    & \quad 
    +  
    C \n{  
    \int_0^t e^{-c2^{2k}(t-\tau)}\n{\Deltahh_k \Deltav_j G(\tau)}_{L^p} d\tau
    }_{ L^r (0,T) } \\
    & \leq  C 2^{-\f{2}{r} k } \sp{\n{  \Deltahh_k \Deltav_j v_0 }_{L^p}+\n{ \Deltahh_k \Deltav_j G  }_{ L^1 (0,T; L^p) }}.  
    \end{split}
\end{align}
Taking summation of \eqref{eq:sol-u-1} over $j,k\in \Z$ with the weight $2^{sk} 2^{\sigma j}$ and employing the above inequalities, we obtain by choosing $r=1,\infty$ that 
\begin{align}
     \| v \|_{ 
    {L^{\infty}}(0,T;\fB^{s ,\sigma}_{p})    
    \cap L^1(0,T;\fB_p^{s+2,\sigma})}
    \leq
    C
    \n{v_0}_{\fB_p^{s,\sigma}}
    +
    C
    \n{G}_{L^1(0,T;\fB_p^{s,\sigma})}.
\end{align}
To proceed, we notice that \eqref{eq:sol-u} also gives
\begin{align}
\n{ \Deltahh_k \Deltav_j v (t)}_{ L^p } 
\leq C e^{ - \beta g(t)  }  \n{ \Deltahh_k \Deltav_j v_0 }_{L^p} 
+
C
\int_0^t e^{ -\beta (g(t)-g(\tau))  } 
\n{  \Deltahh_k \Deltav_j  G(\tau)  }_{L^p}  d\tau. 
\end{align}
Hence, a straightforward calculation yields
\begin{align}
&
\beta \int_0^T g'(t) \n{ \Deltahh_k \Deltav_j v (t)}_{ L^p } dt
\\
&\quad
 \leq 
C \beta \int_0^T g'(t) e^{ -\beta (g(t)-g(\tau))  }  \n{ \Deltahh_k \Deltav_j v_0 }_{L^p} dt \\
& \qquad 
+ C \beta \int_0^T \int_0^t g'(t)   e^{ -(g(t)-g(\tau))  }
\n{  \Deltahh_k \Deltav_j  G(\tau)  }_{L^p}  d\tau dt \\
&\quad =
C \int_0^T  \frac{d}{dt}\sp{-e^{ -(g(t)-g(\tau))  }} dt \n{ \Deltahh_k \Deltav_j v_0 }_{L^p}  \\
& \qquad 
+ C \int_0^T \int_\tau^T \frac{d}{dt}\sp{-e^{ -(g(t)-g(\tau))  }} dt
\n{  \Deltahh_k \Deltav_j  G(\tau)  }_{L^p}  d\tau  \\
&\quad \leq 
C \n{ \Deltahh_k \Deltav_j v_0 }_{L^p}  
+
C \int_0^T \n{  \Deltahh_k \Deltav_j  G(\tau)  }_{L^p}  d\tau.
\end{align}
Then, we sum up over $j,k\in \Z$ with the weight $2^{sk} 2^{\sigma j}$ to arrive at 
\begin{align}
    \beta 
    \n{v}_{{L^1_{g'}}(0,T;\fB_p^{s,\sigma})}
    \leq
    C
    \n{v_0}_{\fB_p^{s,\sigma}}
    +
    C
    \n{G}_{L^1(0,T;\fB_p^{s,\sigma})},
\end{align}
which completes the proof. 
\end{proof}
Now, making use of the above lemma, we show Proposition \ref{prop:anal-lin}.
\begin{proof}[Proof of Proposition \ref{prop:anal-lin}]
We only focus on the estimate of $u_{\rm h}$
as the estimate for $u_3$ follows from the similar simpler argument as for $u_{\rm h}$.
A direct calculation shows that
\begin{align}
  \p_t \p_{t,x} ^{\alpha,\beta} \uh 
  &
  - \Deltah \p_{t,x} ^{\alpha,\beta} \uh
  + \p_{t,x} ^{\alpha,\beta} \sp{  \sp{\uh\cdot \nablah} \uh }=\p_{t,x} ^{\alpha,\beta} F_{\rm h}. 
\end{align}
Then, we see that
\begin{align}
    \p_t   u^{\alpha,\beta}_{{\rm h};r;g}
    +  
    \lambda \beta_3  g'(t)  u^{\alpha,\beta}_{{\rm h};r;g}
    -
    \Deltah   u^{\alpha,\beta}_{{\rm h};r;g} 
    = 
    \sp{ \alpha +\f{|\betah|}{2} } 
    t^{-1} u^{\alpha,\beta}_{{\rm h};r;g}
    +
    F_{{\rm h};r;g}^{\alpha,\beta}.
\end{align}
For any $(s,\sigma)\in \Lambda_{p,\theta}^{\rm h}$ and $p,\theta$ satisfying \eqref{p-theta}, it follows from Lemma \ref{lemm:max-heat} that 
\begin{align}\label{non-except:1}
    &
    \n{
    u^{\alpha,\beta}_{{\rm h};r;g} }_{L^{\infty}(0,T;\fB^{s,\sigma}_p) \cap L^1(0,T;\fB^{2+s,\sigma}_p)}  
    +
    \beta_3
    \n{
    u^{\alpha,\beta}_{{\rm h};r;g}}_{L^1_{g'}(0,T;\fB^{s,\sigma}_p)} 
    \\
    &\quad
    \leq  
    C \n{ 
    u^{\beta_3}_{0,\rm h}
    }_{\fB^{s,\sigma}_p}  
    + 
    C 
    \n{ 
    F_{{\rm h};r;g}^{\alpha,\beta}
    }_{L^1(0,T;\fB^{s,\sigma}_p)}
    + 
    C
    \sp{ \alpha +\f{|\betah|}{2} }
    \n{
    t^{-1} u^{\alpha,\beta}_{{\rm h};r;g}
    }_{L^1(0,T;\fB^{s,\sigma}_p)}.
\end{align}
Summing up \eqref{non-except:1} both sides for all $0\leq \alpha \leq M$ and $\beta \in (\mathbb{N}\cup \{ 0 \})^3$ with $(\alpha,|\betah|)\neq(0,1)$, we deduce  
\begin{align}
    &
    \sum_{
    \substack{0 \leq \alpha \leq M \\ \beta \in (\mathbb{N}\cup \{ 0 \})^3 \\
    (\alpha, |\betah|) \neq (0,1)
    }
    }  
    \sp{
    \n{
    u^{\alpha,\beta}_{{\rm h};r;g} }_{L^{\infty}(0,T;\fB^{s,\sigma}_p) \cap L^1(0,T;\fB^{2+s,\sigma}_p)}  
     +
    \beta_3
    \n{
    u^{\alpha,\beta}_{{\rm h};r;g}}_{L^1_{\lambda  g'}(0,T;\fB^{s,\sigma}_p)}
    }\\
    & \quad
    \leq  
    C \n{ 
    u_{0,\rm h}
    }_{\widetilde{\mathcal{D}}^{s,\sigma}_{p,\theta}}  
    + 
    C 
    \sum_{
    \substack{0 \leq \alpha \leq M \\ \beta \in (\mathbb{N}\cup \{ 0 \})^3 
    }
    }
    \n{ 
    F_{{\rm h};r;g}^{\alpha,\beta}
    }_{L^1(0,T;\fB^{s,\sigma}_p)} \label{eq:v-h-2}
    \\
    & \qquad 
    +
    C
    \sum_{
    \substack{0 \leq \alpha \leq M \\ \beta \in (\mathbb{N}\cup \{ 0 \})^3 \\
    (\alpha, |\betah|) \neq (0,1)
    }
    }
    \sp{ \alpha +\f{|\betah|}{2} }
    \n{
    t^{-1} u^{\alpha,\beta}_{{\rm h};r;g}
    }_{L^1(0,T;\fB^{s,\sigma}_p)}.
\end{align}    
For the last term of \eqref{eq:v-h-2}, it holds
\begin{align}
    &
    \sum_{
    \substack{0 \leq \alpha \leq M \\ \beta \in (\mathbb{N} \cup \{ 0\})^3 \\
    (\alpha, |\betah|) \neq (0,1)
    }   
    }
    \sp{ \alpha +\f{|\betah|}{2} }
    \n{
    t^{-1} u^{\alpha,\beta}_{{\rm h};r;g}
    }_{L^1(0,T;\fB^{s,\sigma}_p)}  \\
    & 
    \quad 
    =
    \sum_{\substack{1 \leq \alpha \leq M \\ \beta \in (\mathbb{N} \cup \{ 0\})^3 
    }}
    \alpha 
    \n{
    t^{-1} u^{\alpha,\beta}_{{\rm h};r;g}
    }_{L^1(0,T;\fB^{s,\sigma}_p)}  \\
    &
    \qquad
    +
    \sum_{\substack{0 \leq \alpha \leq M \\ \beta \in (\mathbb{N} \cup \{ 0\})^3, |\betah| \geq 1 \\
    (\alpha, |\betah|) \neq (0,1)
    }}
    \f{|\betah|}{2} 
    \n{
    t^{-1} u^{\alpha,\beta}_{{\rm h};r;g} 
    }_{L^1(0,T;\fB^{s,\sigma}_p)}  \\
    &
    \quad 
    = 
    \sum_{\substack{1 \leq \alpha \leq M \\ \beta \in (\mathbb{N} \cup \{ 0\})^3 
    }}
    \frac{
    r^{\alpha+|\betah|}  
      \rho_0^{\beta_3} 
    }
    {
    (\alpha-1)!\beta!
    }
    \n{
    t^{\alpha-1+\f{|\betah|}{2}  } 
    \p_t 
    \sp{ 
    \partial_{t}^{\alpha-1}  
    \partial_{x}^{\beta} 
    u_{\rm h} 
    }
    e^{ -  \beta_3 g(t)  } 
    }_{L^1(0,T;\fB^{s,\sigma}_p)} \label{eq:t^-1} \\
    & \qquad 
    +
    \sum_{\substack{0 \leq \alpha \leq M \\ \beta \in (\mathbb{N} \cup \{ 0\})^3, \\ |\betah| \geq 2 
    }}
    \f{|\betah|}{2}
    \frac{
    r^{\alpha+|\betah|}  
    \rho_0^{\beta_3}  
    }
    {
    \alpha!\beta!
    } 
    \n{
    t^{\alpha+ \f{|\betah|-2}{2}} 
    \p_{\xh}^{ \gamma_{\rm h} }
    \sp{
    \partial_{t}^{\alpha}  
    \partial_{\xh}^{ (\betah -\gamma_{\rm h},\beta_3)}
    u_{\rm h} 
    }
    e^{ - \beta_3 g(t)  }   
    }_{L^1(0,T;\fB^{s,\sigma}_p)} \\
    & \qquad 
    +
    \frac{1}{2}
    \sum_{\substack{  1 \leq \alpha \leq M \\ \beta \in (\mathbb{N} \cup \{ 0\})^3, |\betah| = 1 
    }}
    \n{
    t^{-1} u^{\alpha,\beta}_{{\rm h};r;g}
    }_{L^1(0,T;\fB^{s,\sigma}_p)} \\
    &\quad =: \mathcal{U}_{1}^{s,\sigma;M} + \mathcal{U}_{2}^{s,\sigma;M} + \mathcal{U}_{3}^{s,\sigma;M}.
\end{align}
For the estimate of $\mathcal{U}_{1}^{s,\sigma;M}$, a straightforward calculation via the equation of $\uh$ shows that 
\begin{align}
    \frac{
    r^{\alpha+|\betah|} 
      \rho_0^{\beta_3}     
    }
    {
    (\alpha-1)!\beta!
    } 
    t^{\alpha-1+\f{|\betah|}{2}  }
    \p_t 
    \sp{ 
    \partial_{t}^{\alpha-1}  
    \partial_{x}^{\beta}
    u_{\rm h} 
    }
    e^{ - \beta_3 g(t)  } 
    = 
    r 
    \Deltah u_{{\rm h}; r; g'}^{\alpha-1,\beta}
    +
    r 
    F^{\alpha-1,\beta}_{{\rm h};r;g}. 
\end{align}
Hence, 
\begin{align}
    \mathcal{U}_{1}^{s,\sigma;M}
    & 
    =
    \sum_{\substack{1 \leq \alpha \leq M \\ \beta \in (\mathbb{N} \cup \{ 0\})^3}}
    \frac{
    r^{\alpha+|\betah|}
     \rho_0^{\beta_3}     
    }
    {(\alpha-1)!\beta!}
    \n{
    t^{\alpha-1+\f{|\betah|}{2}  }
    \p_t 
    \sp{ 
    \partial_{t}^{\alpha-1}  
    \partial_{x}^{\beta}  
    u_{{\rm h}} 
    }
    e^{ -  \beta_3 g(t)  } 
    }_{L^1(0,T;\fB^{s,\sigma}_p)} \\
    & 
    \leq 
    C r
    \sum_{\substack{1 \leq \alpha \leq M \\ \beta \in (\mathbb{N} \cup \{ 0\})^3}}
    \n{
    u_{{\rm h};r;g}^{\alpha-1,\beta}
    }_{L^1(0,T;\fB^{2+s,\sigma}_p)}
    +
    r\sum_{\substack{1 \leq \alpha \leq M \\ \beta \in (\mathbb{N} \cup \{ 0\})^3}}
    \n{ F^{\alpha-1,\beta}_{{\rm h};r;g} 
    }_{L^1(0,T;\fB^{s,\sigma}_p)} \\
    & 
    \leq 
    C r
    \n{u_{\rm h}}_{\mathcal{H}^{s,\sigma;M}_{p;r;g}(0,T)}
    +
    r\sum_{\substack{1 \leq \alpha \leq M \\ \beta \in (\mathbb{N} \cup \{ 0\})^3}}
    \n{ F^{\alpha-1,\beta}_{{\rm h};r;g} 
    }_{L^1(0,T;\fB^{s,\sigma}_p)} . 
\end{align}
Next, we consider the estimate for $\mathcal{U}_2^{s,\sigma}$.
Here, for the case of $|\betah| \geq 2$, we choose $\gamma_{\rm h}=\gamma_{\rm h}(\betah) \in (\mathbb{N}\cup \{ 0 \})^2$ so that
\begin{align}
    \gamma_{\rm h}
    :=
    \begin{cases}
        (0,2) \quad & {\rm if}\quad \beta_1=0, \\
        (1,1) \quad & {\rm if}\quad \beta_1\beta_2 \neq 0, \\
        (2,0) \quad & {\rm if}\quad \beta_2=0.
    \end{cases}
\end{align}
We easily see that $|\gamma_{\rm h} |=2$, $\betah - \gamma_{\rm h} \geq 0$, and $|\betah|/\betah! \leq C/(\betah-\gamma_{\rm h})!$.
We then have
\begin{align}
    \mathcal{U}_{2}^{s,\sigma;M}
    \leq
    C
    r
    \sum_{\substack{0 \leq \alpha \leq M \\ \beta \in (\mathbb{N} \cup \{ 0\})^3, |\betah| \geq 2}}
    \n{
    u_{{\rm h};r;g}^{ \alpha, (\betah-\gamma_{\rm h},\beta_3)    }
    }_{L^1(0,T;\fB^{s+2,\sigma}_p)} 
    \leq 
    C r \n{u_{\rm h}}_{\mathcal{H}^{s,\sigma;M}_{p;r;g}(0,T)}.
\end{align}
Observe that $\mathcal{U}_{3}^{s,\sigma;M}$ is estimated in the same way as $\mathcal{U}_{1}^{s,\sigma;M}$. Hence, we get
\begin{align}
\mathcal{U}_{3}^{s,\sigma;M}
\leq 
C r 
\n{\uh}_{\mathcal{H}^{s,\sigma;M}_{p;r;g}}
+
r\sum_{\substack{1 \leq \alpha \leq M \\ \beta \in (\mathbb{N} \cup \{ 0\})^3}}
\n{ F^{\alpha-1,\beta}_{{\rm h};r;g} 
}_{L^1(0,T;\fB^{s,\sigma}_p)} . 
\end{align}
Thus, combining the estimates of $\mathcal{U}_{\ell}^{s,\sigma;M}$ ($\ell=1,2,3$), we have
\begin{align} 
&
\sum_{
\substack{0 \leq \alpha \leq M \\ \beta \in (\mathbb{N} \cup \{ 0\})^3 \\
(\alpha, |\betah|) \neq (0,1)
}   
}
\sp{ \alpha +\f{|\betah|}{2} }
\n{
t^{-1} u^{\alpha,\beta}_{{\rm h};r;g}
}_{L^1(0,T;\fB^{s,\sigma}_p)} 
\\
&\qquad
\leq 
C r 
\n{\uh}_{\mathcal{H}^{s,\sigma;M}_{p;r;g}}
+
r\sum_{\substack{1 \leq \alpha \leq M \\ \beta \in (\mathbb{N} \cup \{ 0\})^3}}
\n{ F^{\alpha-1,\beta}_{{\rm h};r;g} 
}_{L^1(0,T;\fB^{s,\sigma}_p)} . \label{non-except:2}
\end{align}
Hence, combining \eqref{non-except:1}, \eqref{non-except:2}, we have 
\begin{align}
    &
    \sum_{
    \substack{0 \leq \alpha \leq M \\ \beta \in (\mathbb{N}\cup \{ 0 \})^3 \\
    (\alpha, |\betah|) \neq (0,1)
    }
    }  
    \sp{
    \n{
    u^{\alpha,\beta}_{{\rm h};r;g} }_{L^{\infty}(0,T;\fB^{s,\sigma}_p) \cap L^1(0,T;\fB^{2+s,\sigma}_p)}  
    +
    \beta_3
    \n{
    u^{\alpha,\beta}_{{\rm h};r;g}}_{L^1_{g'}(0,T;\fB^{s,\sigma}_p)}
    }\\
    & \quad
    \leq  
    C \n{ 
    u_{0,\rm h}
    }_{\widetilde{\mathcal{D}}^{s,\sigma}_{p,\theta}}  
    + 
    C r 
    \n{\uh}_{\mathcal{H}^{s,\sigma;M}_{p;r;g}}
    +
    C
    r\sum_{\substack{0 \leq \alpha \leq M \\ \beta \in (\mathbb{N} \cup \{ 0\})^3}}
    \n{ F^{\alpha,\beta}_{{\rm h};r;g} 
    }_{L^1(0,T;\fB^{s,\sigma}_p)} . \label{non-except:3}
\end{align}

Next, we consider the left-hand side of \eqref{non-except:1} when $(\alpha,|\betah|)=(0,1)$. 
We recall the integral equation of $\uh$: 
\begin{align}
   \uh(t)= e^{t \Deltah} u_{0,\rm h}-
   \int_0^t e^{(t-\tau) \Deltah} 
    F_{\rm h}(\tau) d\tau.
\end{align}
By setting  
\begin{align}
    u^{\beta_3}_{{\rm h};g}(t)
    := 
    \f{  
    \rho_0^{\beta_3}     
    }
    {\beta_3!} \p_{x_3}^{\beta_3} \uh(t) 
    e^{ -  {  \lambda} \beta_3 g(t)  },  
    \quad
    F_{{\rm h};g}^{\beta_3}(\tau)
    :=
    \f{
    \rho_0^{\beta_3}     
    }
    {\beta_3!}  
    \p_{x_3}^{\beta_3} F_{\rm h}(\tau)
    e^{ -  {  \lambda} \beta_3 g(\tau)  }   ,
\end{align}
we see that $u^{\beta_3}_{{\rm h};g}$ solves the integral equation 
\begin{align}
    u^{\beta_3}_{{\rm h};g}(t)
    =
    e^{ - \beta_3 g (t)  } 
    e^{t \Deltah} 
    u_{0,\rm h}^{\beta_3} 
    -
    \int_0^t  
    e^{ -   \beta_3 \sp{ g(t)-g(\tau) } }  e^{(t-\tau) \Deltah}  F_{{\rm h};g}^{\beta_3}(\tau) d\tau . 
\end{align}
It follows that 
\begin{align}
    \n{ \Deltahh_k\Deltav_j  u^{\beta_3}_{{\rm h};g}(t)  }_{L^p}
    & \leq C 
    e^{ - \beta_3 g(t)  } 
    e^{-c2^{2k} t }  
    \n{  \Deltahh_k\Deltav_j  u_{0,\rm h}^{\beta_3}          }_{L^p} \\
    & \quad 
    + 
    C 
    \int_0^t   e^{-c2^{2k} (t-\tau) }  
    e^{ -  {  \lambda} \beta_3  \sp{ g(t)-g(\tau)} } 
    \n{  \Deltahh_k\Deltav_j   F_{{\rm h};g}^{\beta_3}(\tau)  }_{L^p}  d\tau.
\end{align}
Thus, for any $\betah \in (\mathbb{N}\cup \{ 0 \})^2$ with $|\betah|=1$, the above inequality implies 
\begin{align}
& t^{\f{1}{2}}   
\n{ \Deltahh_k\Deltav_j  { \p_{\xh}^{\betah} u^{\beta_3}_{{\rm h};g} } (t)  }_{L^p} 
\leq C  t^{\f{1}{2}} 2^k \n{ \Deltahh_k\Deltav_j  u^{\beta_3}_{{\rm h};g}(t)  }_{L^p} \\
& \quad 
\leq 
C  t^{\f{1}{2}} 2^k  e^{-c2^{2k} t }  
e^{ -   \beta_3 g (t)  }    
\n{  \Deltahh_k\Deltav_j  u_{0,\rm h}^{\beta_3}          }_{L^p} \\
& \qquad 
+ 
C  t^{\f{1}{2}} 2^k  e^{-c2^{2k} t }  
\int_0^{\f{t}{2}} 
e^{ - \beta_3  \sp{ g(t)-g(\tau) }  }  
\n{  \Deltahh_k\Deltav_j   F_{{\rm h};g}^{\beta_3}(\tau)  }_{L^p}  d\tau 
\\
& \qquad 
+ 
C  \int^t_{\f{t}{2}} e^{-c2^{2k} (t-\tau) }  
e^{ - \beta_3   \sp{ g(t)-g(\tau)}  }    
\tau^{\f{1}{2}} 2^k \n{  \Deltahh_k\Deltav_j   F_{{\rm h};g}^{\beta_3}(\tau)  }_{L^p}  d\tau \label{eq:R2}\\ 
& \quad 
\leq 
C  e^{-c2^{2k} t } \n{  \Deltahh_k\Deltav_j  u_{0,\rm h}^{\beta_3}          }_{L^p} 
+ C  e^{-c2^{2k} t } \int_0^t \n{  \Deltahh_k\Deltav_j   F_{{\rm h};g}^{\beta_3}(\tau)  }_{L^p}  d\tau \\  
& \qquad 
+ 
C \int_0^t e^{-c2^{2k} (t-\tau) } \tau^{\f{1}{2}} 2^k \n{  \Deltahh_k\Deltav_j  F_{{\rm h};g}^{\beta_3}(\tau)  }_{L^p}  d\tau.   
\end{align}
For any $(s,\sigma)\in \Lambda_{p,\theta}^{\rm h}$ and $p,\theta$ satisfying \eqref{p-theta} and $q\in \{ 1,\infty\}$, we take $L^q(0,T)$-norm of the above inequality both sides and sum over $j,k\in \mathbb{Z}$ with the weight $2^{(s+\f{2}{q}) k} 2^{\sigma j}$ to conclude that 
\begin{align}\label{eq:R2-1}
    & \n{
    t^{\f{1}{2}}  \p_{\xh}^{\betah} u^{\beta_3}_{{\rm h};g}
    }_{L^{\infty}(0,T;\fB^{s,\sigma}_p) \cap L^1(0,T;\fB^{ {    2+s} ,\sigma}_p)}  \\  
    & \quad 
    \leq 
    C 
    \sp{
    \n{u^{\beta_3}_{0,\rm h} }_{\fB^{s,\sigma}_p}
    + \n{F_{{\rm h};g}^{\beta_3}}_{L^1(0,T;\fB^{   s ,\sigma}_p)} 
    + \n{  t^{\f{1}{2}} \nablah  F_{{\rm h};g}^{\beta_3} }_{L^1(0,T;\fB^{   s ,\sigma}_p)}
    }. 
\end{align}
Moreover, we notice that \eqref{eq:R2} also implies
\begin{align}
    & t^{\f{1}{2}}   
    \n{ \Deltahh_k\Deltav_j  \sp{ \p_{\xh}^{\betah} u^{\beta_3}_{{\rm h};g} } (t)  }_{L^p}  \\ 
    & \quad 
    \leq 
    C  e^{ -   \beta_3 g(t)  }  
    \n{  \Deltahh_k\Deltav_j { u^{\beta_3}_{0,\rm h}  }        }_{L^p}
    + 
    C \int_0^{\f{t}{2}}  
    e^{ -  \beta_3 ( g(t)-g(\tau) )  }    
    \n{  \Deltahh_k\Deltav_j   F_{{\rm h};g}^{\beta_3}(\tau)  }_{L^p}  d\tau \\
    & \qquad 
    + 
    C  \int^t_{\f{t}{2}}   e^{ -  \beta_3 ( g(t)-g(\tau) )  }
    \tau^{\f{1}{2}} 2^k \n{  \Deltahh_k\Deltav_j   F_{{\rm h};g}^{\beta_3}(\tau)  }_{L^p}  d\tau \\
    & \quad 
    \leq 
    C  e^{ - \beta_3 g(t)  }
    \n{  \Deltahh_k\Deltav_j  u_{0,\rm h}^{\beta_3}         }_{L^p} \\
    & \qquad 
    + C \int_0^t 
    e^{ -  \beta_3 ( g(t)-g(\tau) )  }    
    \sp{  \n{  \Deltahh_k\Deltav_j   F_{{\rm h};g}^{\beta_3}(\tau)  }_{L^p} + \tau^{\f{1}{2}} 2^k \n{  \Deltahh_k\Deltav_j   F_{{\rm h};g}^{\beta_3}(\tau)  }_{L^p}  }
    d\tau .
    \end{align}
Thus, we see that
\begin{small}  
\begin{align}
    &\beta_3 
    \int_0^T g'(t) t^{\f{1}{2}}   
    \n{ \Deltahh_k\Deltav_j  \sp{ \p_{\xh}^{\betah} u^{\beta_3}_{{\rm h};g} } (t)  }_{L^p} dt \\
    & \quad 
    \leq 
    C \lambda \beta_3  \int_0^T g'(t) 
    e^{ - \beta_3 g(t)  } dt 
    \n{  \Deltahh_k\Deltav_j u_{0,\rm h}^{\beta_3}        }_{L^p} \\
    & \qquad   
    + 
    C 
    \int_0^T  \int_0^t \lambda \beta_3  g'(t) 
    e^{ -  \beta_3 ( g(t)-g(\tau) )  }    %
    \\
    &\qquad \qquad \qquad \qquad
    \sp{  \n{  \Deltahh_k\Deltav_j   F_{{\rm h};g}^{\beta_3}(\tau)  }_{L^p} + \tau^{\f{1}{2}} 2^k \n{  \Deltahh_k\Deltav_j   F_{{\rm h};g}^{\beta_3}(\tau)  }_{L^p}  }
    d\tau  dt  \\
    & \quad 
    \leq C \n{  \Deltahh_k\Deltav_j  u_{0,\rm h}^{\beta_3}          }_{L^p} 
    + C  
    \int_0^T \sp{  \n{  \Deltahh_k\Deltav_j   
    F_{{\rm h};g}^{\beta_3}(\tau)  }_{L^p} 
    + 
    \tau^{\f{1}{2}} 2^k \n{  \Deltahh_k\Deltav_j   F_{{\rm h};g}^{\beta_3}(\tau)  }_{L^p}  }
    d\tau,
\end{align}
\end{small}  
which implies
\begin{align}\label{eq:R3}
\begin{split}
&\beta_3  \n{
t^{\f{1}{2}}  \p_{\xh}^{\betah} u^{\beta_3}_{{\rm h};g}
}_{L^1_{ g' }(0,T;\fB^{s,\sigma}_p)} \\
& \quad 
\leq C  
\sp{
\n{u_{0,\rm h}^{\beta_3} }_{\fB^{s,\sigma}_p}
+ \n{F_{{\rm h};g}^{\beta_3}}_{L^1(0,T;\fB^{  s ,\sigma}_p)} 
+ \n{  t^{\f{1}{2}} \nablah F_{{\rm h};g}^{\beta_3} }_{L^1(0,T;\fB^{   s ,\sigma}_p)}
}.
\end{split}
\end{align}
Combining \eqref{eq:R2-1} and \eqref{eq:R3} and recalling that $u_{ {\rm h};r; g }^{\alpha, \beta} = r t^{\f{1}{2}}  \p_{\xh}^{\betah} u^{\beta_3}_{{\rm h};g} $ when $(\alpha,|\betah|)=(0,1)$, we obtain 
\begin{align}
    & \n{
    r t^{\f{1}{2}}  \p_{\xh}^{\betah} u^{\beta_3}_{{\rm h};g}
    }_{L^{\infty}(0,T;\fB^{s,\sigma}_p)\cap L^1(0,T;\fB^{ {    2+s} ,\sigma}_p)} 
    +
    \beta_3  \n{ r 
    t^{\f{1}{2}}  \p_{\xh}^{\betah} u^{\beta_3}_{{\rm h};g}
    }_{L^1_{ {  \lambda  g' } }(0,T;\fB^{s,\sigma}_p)} \\
    & \quad 
    \leq C  
    \sp{ r
    \n{u_{0,\rm h}^{\beta_3} }_{\fB^{s,\sigma}_p}
    +r \n{ F_{{\rm h};g}^{\beta_3}}_{L^1(0,T;\fB^{   s ,\sigma}_p)} 
    + r \n{  t^{\f{1}{2}} \nablah F_{{\rm h};g}^{\beta_3} }_{L^1(0,T;\fB^{   s ,\sigma}_p)}
    }\\ 
    & \quad 
    \leq  
    C  
    \sp{
    \n{u_{0,\rm h}^{\beta_3} }_{\fB^{s,\sigma}_p}
    + \n{F_{{\rm h};g}^{\beta_3}}_{L^1(0,T;\fB^{   s,\sigma}_p)} 
    +  \n{  rt^{\f{1}{2}} \nablah  F_{{\rm h};g}^{\beta_3} }_{L^1(0,T;\fB^{   s ,\sigma}_p)}
    }.
\end{align}
Summing up the above estimate over $|\betah|=1$ and $\beta_3 \in \mathbb{N} \cup \{0 \}$, we have
\begin{align}
    & 
    \sum_{\substack{|\betah|=1, \\ \beta_3 \in \mathbb{N} \cup \{0 \}}}
    \sp 
    {\n{
    r t^{\f{1}{2}}  \p_{\xh}^{\betah} u^{\beta_3}_{{\rm h};g}
    }_{L^{\infty}(0,T;\fB^{s,\sigma}_p)\cap L^1(0,T;\fB^{ {    2+s} ,\sigma}_p)} 
    +
    \beta_3  
    \n{ r 
    t^{\f{1}{2}}  \p_{\xh}^{\betah} u^{\beta_3}_{{\rm h};g}
    }_{L^1_{ {   g' } }(0,T;\fB^{s,\sigma}_p)}} \\
    & \quad 
    \leq C r
    \n{u_{0,\rm h} }_{\widetilde{\mathcal{D}}^{s,\sigma}_p}
    +
    Cr 
    \sum_{
    \substack{0 \leq \alpha \leq M \\ \beta \in (\mathbb{N}\cup \{ 0 \})^3 
    }
    }
    \n{ 
    F_{{\rm h};r;g}^{\alpha,\beta}
    }_{L^1(0,T;\fB^{s,\sigma}_p)}. \label{except}
\end{align}
Combining \eqref{non-except:3} and \eqref{except}, we complete the proof.
\end{proof}
\subsection{Analytic estimates for the nonlinear solutions}
Let us consider the analytic global a priori estimate of the solutions to \eqref{eq:ANS}, which plays a central role of our analysis.
\begin{prop}\label{prop}
Let $M \in \mathbb{N}$.
Assume the existence of an space-time analytic solution $u$ to the Cauchy problem \eqref{eq:ANS} on some time interval $[0,T)$, and a solution $f_M \in C^1([0,\infty))$ to the following ordinary differential equation: 
\begin{align}\label{ODE}
    \begin{dcases}
    \begin{aligned}
    f_M'(t) 
    = {}&
    \sum_{
    \substack{0 \leq \alpha \leq M \\ \beta \in (\mathbb{N}\cup \{ 0 \})^3}
    }
    \frac{r^{\alpha + |\beta_{\rm h}|}t^{\alpha + \frac{|\beta_{\rm h}|}{2}}}{\alpha!\beta!}
    \n{
    \partial_t^{\alpha}
    \partial_{x}^{\beta}
    u_{\rm h} (t)
    }_{\fB^{\frac{2}{p}+1,\frac{1}{p}}_p}
    e^{-\lambda \beta_3 f_M(t)} 
    \\
    &+
    \sum_{
    \substack{0 \leq \alpha \leq M \\ \beta_{\rm h} \in (\mathbb{N}\cup \{ 0 \})^2}
    }
    \frac{r^{\alpha + |\beta_{\rm h}|}t^{\alpha + \frac{|\beta_{\rm h}|}{2}}}{\alpha!\beta_{\rm h}!}
    \n{
    \partial_t^{\alpha}
    \partial_{x_{\rm h}}^{\beta_{\rm h}}
    u_3(t)}_{\fB^{\frac{2}{p},\frac{1}{p}}_p},
    \end{aligned}
    \\
    f_M(0) = 0.
    \end{dcases}
\end{align}
Then, for any $p$ and $\theta$ satisfying \eqref{p-theta}, there exists a constant $C_0=C_0(p,\theta) \geq 1$, independent of $u$, $T$, and $M$, such that 
\begin{align}
    &\n{u_{\rm h}}_{\mathcal{H}^M_{p,\theta;r;\lambda f_M}(0,T)}
    \leq 
    C_0 \n{ u_{0,\rm h}}_{\mathcal{D}_{p,\theta}^{\rm h} }
    + 
    \sp{\f{C_0}{\lambda}+C_0r} \n{u_{\rm h}}_{\mathcal{H}^M_{p,\theta;r;\lambda f_M}(0,T)}
    \\
    & \quad 
    +  
    \frac{C_0}{\lambda}
    \exp
    \lp{
    \lambda
    \sup_{0 \leq t <T}
    f_M(t)
    }  
    \n{u_{\rm h}}_{\mathcal{H}^M_{p,\theta;r;\lambda f_M}(0,T)}
    +
    C_0 
    \sp{\n{u_{\rm h}}_{\mathcal{H}_{p,\theta;r;\lambda f_M}^M(0,T)}}^2 \\
    & \quad 
    +  
    C_0  
    \int_0^T 
    \sum_{
    \substack{0 \leq \alpha \leq M \\ \betah \in (\mathbb{N}\cup \{ 0 \})^2}
    }
    \frac{r^{\alpha+|\betah|} t^{\alpha+\f{|\betah|}{2}} }{\alpha!\betah!}
    \n{
    \partial_{t}^{\alpha}  
    \partial_{\xh}^{\betah}  
    u_3(t)
    }_{\fB^{\f{2}{p}+1,\f{1}{p}}_p}
    \n{u_{\rm h}}_{\mathcal{H}^M_{p,\theta;r;\lambda f_M}(0,t)}  dt 
\end{align}
and 
\begin{align}
    &
    \n{u_3}_{\mathcal{V}^M_{p,\theta;r}(0,T)}
    \leq{}
    C_0   
    \n{u_{0,3}}_{D^{\rm v}_{p,\theta}} 
    +
    C_0r
    \n{u_3}_{\mathcal{V}^M_{p,\theta;r}(0,T)}
    \\
    & \quad 
    +
    C_0 
    \n{u_{\rm h}}_{\mathcal{H}^M_{p,\theta;r;\lambda f_M}(0,T)} 
    \n{u_3}_{\mathcal{V}^M_{p,\theta;r}(0,T)}
    +
    C_0 
    \sp{  \n{u_{\rm h}}_{\mathcal{H}^M_{p,\theta;r;\lambda f_M}(0,T)}}^2 
\end{align}
for all $\lambda > 0$ and $0 < r \leq 1$.
Moreover, it holds
\begin{align}
    \sup_{0 \leq t < T} f_M(t)
    \leq
    C_0
    \n{u_3}_{\mathcal{V}^M_{p,\theta}(0,T)}^{1-\theta}
    \n{u_{\rm h}}_{\mathcal{H}^M_{p,\theta;r;\lambda f_M}(0,T)}^{\theta}
    +
    \n{u_{\rm h}}_{\mathcal{H}^M_{p,\theta;r;\lambda f_M}(0,T)}.
\end{align}
\end{prop}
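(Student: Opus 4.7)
The plan is to apply the linear estimate Proposition~\ref{prop:anal-lin} componentwise to \eqref{eq:ANS} with nonlinear forcings $F_{\rm h}=-(u\cdot\nabla)u_{\rm h}-\nablah P$ and $F_3=-(u\cdot\nabla)u_3-\partial_{x_3}P$, taking the weight $g=\lambda f_M$ in the horizontal estimate (to match $\mathcal{H}^M_{p,\theta;r;\lambda f_M}$) and $g\equiv 0$ in the vertical one, since $u_3$ carries no $x_3$-analyticity. This reduces the first two inequalities to estimating the forcing sums $\sum_{\alpha,\beta}\|F_{{\rm h};r;\lambda f_M}^{\alpha,\beta}\|_{L^1(0,T;\fB_p^{s,\sigma})}$ and $\sum_{\alpha,\betah}\|F_{3;r}^{\alpha,\betah}\|_{L^1(0,T;\fB_p^{s,\sigma})}$ at each anchor $(s,\sigma)$ in $\Lambda_{p,\theta}^{\rm h}$ (resp.\ $\Lambda_{p,\theta}^{\rm v}$).

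For the horizontal forcing, the convection $(u_{\rm h}\cdot\nablah)u_{\rm h}$ is bilinear and handled directly by Lemma~\ref{lemm:prod}, producing the $\|u_{\rm h}\|_{\mathcal{H}}^2$ contribution. The derivative-loss term $u_3\partial_{x_3}u_{\rm h}$ is expanded by Leibniz after differentiating $\beta_3$ times, and the divergence-free identity $\partial_{x_3}u_3=-\divh u_{\rm h}$ is used to eliminate the lone $\partial_{x_3}u_3$ factor, yielding the decomposition \eqref{dif:1} with coefficients of order $O(\beta_3)$. The resulting $\beta_3$ is absorbed into the $\beta_3\|\cdot\|_{L^1_{g'}}$ part of $\mathcal{H}^M_{p,\theta;r;\lambda f_M}$ by pairing with the $u_{\rm h}$-contribution of $f_M'$ and taking $\lambda$ large, which in turn yields the $\tfrac{C_0}{\lambda}\|u_{\rm h}\|_{\mathcal{H}}$ and $\exp(\lambda\sup f_M)\|u_{\rm h}\|_{\mathcal{H}}$ terms in the bound (the latter from having to pass $e^{-\lambda\beta_3 f_M(\tau)}$ at intermediate times $\tau<t$ to $e^{-\lambda\beta_3 f_M(t)}$ inside the integral). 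The pressure splits as $\nablah P=\nablah P_1+\nablah P_2$, where $P_1$ is a double Riesz on $u_{\rm h}\otimes u_{\rm h}$ handled by Lemmas~\ref{lemm:M-bdd} and~\ref{lemm:prod}, while the dangerous
\begin{align*}
\nablah P_2=2\nablah\partial_{x_3}(-\Delta)^{-1}(u_{\rm h}\cdot\nablah u_3)
\end{align*}
produces, after $\beta_3$-differentiation and the divergence-free substitution, quadratic terms in $u_{\rm h}$ whose horizontal derivative count exceeds the critical regularity; these are rescued by redistributing the multiplier $\nablah\partial_{x_3}(-\Delta)^{-1}$ so that the extra derivative is absorbed into the super-critical auxiliary norms $\fB_p^{2/p-1+\theta,1/p-\theta}$ and $\fB_p^{2/p-2+\theta,1/p-\theta}$ of $\Lambda^{\rm h}_{p,\theta}$.

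For $u_3$, Proposition~\ref{prop:anal-lin} is applied with $g\equiv 0$. The transport $(u_{\rm h}\cdot\nablah)u_3$ gives the cross term $\|u_{\rm h}\|_{\mathcal{H}}\|u_3\|_{\mathcal{V}}$ via Lemma~\ref{lemm:prod}; the problematic $u_3\partial_{x_3}u_3=-u_3\divh u_{\rm h}$ by divergence-free, together with the corresponding pressure contribution, produces the additional $\|u_{\rm h}\|_{\mathcal{H}}^2$ term. For the bound on $\sup f_M$, we integrate \eqref{ODE}: the horizontal piece of $f_M'$ is (up to the $\rho_0^{\beta_3}$ weight bookkeeping) precisely the $L^1(0,T;\fB_p^{2/p+1,1/p})$ component of $\|u_{\rm h}\|_{\mathcal{H}^M_{p,\theta;r;\lambda f_M}}$, yielding the linear $\|u_{\rm h}\|_{\mathcal{H}}$ term, while the vertical piece requires controlling $\int_0^T\|u_3(t)\|_{\fB_p^{2/p,1/p}}\,dt$. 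We estimate this by interpolating $\fB_p^{2/p,1/p}$ between the anchor spaces of $\Lambda^{\rm v}_{p,\theta}$ (combined, if needed, with divergence-free to trade a $u_3$-factor for a $u_{\rm h}$-factor at the super-critical level) via Lemma~\ref{lemm:inter} with rate $\theta$ and a time-H\"older argument, producing the $\|u_3\|_{\mathcal{V}}^{1-\theta}\|u_{\rm h}\|_{\mathcal{H}}^{\theta}$ bound. The principal obstacle throughout is the redistribution argument for $\nablah P_2$: it both dictates the choice of auxiliary norms in $\Lambda^{\rm h}_{p,\theta}$ and constrains the admissible range \eqref{p-theta}.
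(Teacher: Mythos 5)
Your plan follows the paper's proof quite closely: apply Proposition~\ref{prop:anal-lin} with weight $g=\lambda f_M$ for the horizontal component and $g\equiv 0$ for the vertical one, expand the nonlinearity by Leibniz and eliminate the dangerous $\partial_{x_3}u_3$ via $\partial_{x_3}u_3=-\divh u_{\rm h}$, split the pressure as $P=P_1+P_2$ and redistribute the excess horizontal derivative in $\nablah P_2$ through the Riesz symbol into the auxiliary regularities of $\Lambda^{\rm h}_{p,\theta}$, and use Lemma~\ref{lemm:inter} plus divergence-free to bound $\sup f_M$. This is exactly the paper's three-step proof.

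One point is misdescribed and, if carried out as stated, would fail: the origin of the factor $\exp(\lambda\sup_{t<T}f_M(t))$. You attribute it to ``passing $e^{-\lambda\beta_3 f_M(\tau)}$ at intermediate times $\tau<t$ to $e^{-\lambda\beta_3 f_M(t)}$ inside the [Duhamel] integral.'' That would cost $e^{\lambda\beta_3(f_M(t)-f_M(\tau))}$, which depends on $\beta_3$ and destroys summability in $\beta_3$. In the actual proof no Duhamel mismatch occurs: since $f_M$ is non-decreasing, the mild solution already carries the favorable factor $e^{-\lambda\beta_3(f_M(t)-f_M(\tau))}\leq 1$ (cf.\ Lemma~\ref{lemm:max-heat}). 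The exponential instead arises from the single index shift $\beta_3\mapsto\beta_3+1$ produced by the undifferentiated $u_3$ hitting $\partial_{x_3}$ of the top-order factor (the term $\mathcal{I}^{s,\sigma;M}_3$ and, with $(\alpha',\betah')\neq(0,0)$, $\mathcal{I}^{s,\sigma;M}_4$): writing $\partial_{x_3}u^{\alpha,\beta}_{{\rm h};r;\lambda f_M}=\rho_0^{-1}(\beta_3+1)e^{\lambda f_M(t)}u^{\alpha,\beta+e_3}_{{\rm h};r;\lambda f_M}$, only \emph{one} factor $e^{\lambda f_M}$ appears (independent of $\beta_3$), while the coefficient $\beta_3+1$ is paid by the $L^1_{\lambda f_M'}$ component of $\mathcal{H}^M_{p,\theta;r;\lambda f_M}$, producing a $1/\lambda$. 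So the correct bound is $\tfrac{C}{\lambda}\exp(\lambda\sup f_M)\n{u_{\rm h}}_{\mathcal{H}}$, whereas your stated mechanism would give an unsummable $\beta_3$-dependent exponential. The rest of the sketch (absorption of $O(\beta_3)$ coefficients from $\mathcal{I}_1$ into $L^1_{\lambda f_M'}$, the $P_{2,2}$ redistribution, the interpolation for $\sup f_M$) matches the paper.
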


\begin{proof}
We divide the proof into three parts: the first one concerns the estimate for the function $f_M(t)$ and the second and third one focus on the analytic estimates for the horizontal and vertical components. 

\noindent 
{\it Step 1. The estimate for $f_M(t)$.} 
We see by the interpolation inequality that
\begin{align}
    & 
    \sum_{
    \substack{0 \leq \alpha \leq M \\ \betah \in (\mathbb{N}\cup \{ 0 \})^2}
    }
    \n{
    u^{\alpha,\betah}_{3;r} 
    }_{L^1(0,T;\fB^{\f{2}{p},\frac{1}{p}}_p)} \\
    & \quad
    \leq 
    \sum_{
    \substack{0 \leq \alpha \leq M \\ \betah \in (\mathbb{N}\cup \{ 0 \})^2}
    }
    \n{
    u^{\alpha,\betah}_{3;r} }^{1-\theta}
    _{L^1(0,T;\fB^{\f{2}{p}+\theta,\frac{1}{p}-\theta}_p)} 
    \n{
    u^{\alpha,\betah}_{3;r} }^{\theta}
    _{L^1(0,T;\fB^{-1+\f{2}{p}+\theta,\frac{1}{p}+1-\theta}_p)} 
    \\
    & \quad
    \leq 
    \sum_{
    \substack{0 \leq \alpha \leq M \\ \betah \in (\mathbb{N}\cup \{ 0 \})^2}
    }
    \n{
    u^{\alpha,\betah}_{3;r} }^{1-\theta}
    _{L^1(0,T;\fB^{\f{2}{p}+\theta,\frac{1}{p}-\theta}_p)} 
    \n{
    \p_{x_3} u^{\alpha,\betah}_{3;r} }^{\theta}
    _{L^1(0,T;\fB^{-1+\f{2}{p}+\theta,\frac{1}{p}-\theta}_p)} 
    \\
    & \quad
    =
    \sum_{
    \substack{0 \leq \alpha \leq M \\ \betah \in (\mathbb{N}\cup \{ 0 \})^2}
    }
    \n{
    u^{\alpha,\betah}_{3;r} }^{1-\theta}
    _{L^1(0,T;\fB^{\f{2}{p}+\theta,\frac{1}{p}-\theta}_p)} 
    \n{
    \divh u^{\alpha,\betah}_{{\rm h};r} }^{\theta}
    _{L^1(0,T;\fB^{-1+\f{2}{p}+\theta,\frac{1}{p}-\theta}_p)} 
    \\ 
    & \quad 
    \leq 
    C 
    \sum_{
    \substack{0 \leq \alpha \leq M \\ \betah \in (\mathbb{N}\cup \{ 0 \})^2}
    }
     \n{
    u^{\alpha,\betah}_{3;r} }^{1-\theta}
    _{L^1(0,T;\fB^{\f{2}{p}+\theta,\frac{1}{p}-\theta}_p)} 
    \n{
    u^{\alpha,\betah}_{{\rm h};r} }^{\theta}
    _{L^1(0,T;\fB^{\f{2}{p}+\theta,\frac{1}{p}-\theta}_p)} \\
    & \quad 
    \leq C 
    \n{u_3}_{\mathcal{V}^M_{p;r}(0,T)}^{1-\theta} 
    \n{u_{\rm h}}_{\mathcal{H}^M_{p;r;\lambda f_M}(0,T)}^{\theta}.
\end{align}
Using this,
we have
\begin{align}
    f_M(t)   
    ={}&
    \sum_{
    \substack{0 \leq \alpha \leq M \\ \betah \in (\mathbb{N}\cup \{ 0 \})^2}
    }
    \n{
    u^{\alpha,\betah}_{3;r} 
    }_{L^1(0,t;\fB^{\f{2}{p},\frac{1}{p}}_p)}
    +
     \sum_{
    \substack{0 \leq \alpha \leq M \\ \beta \in (\mathbb{N}\cup \{ 0 \})^3}
    }
    \n{
    u^{\alpha,\beta}_{{\rm h};r;\lambda f_M}
    }_{L^1(0,t;\fB^{\f{2}{p}+1,\frac{1}{p}}_p)} \\ 
    \leq{}&
  \sum_{
    \substack{0 \leq \alpha \leq M \\ \betah \in (\mathbb{N}\cup \{ 0 \})^2}
    }
    \n{
    u^{\alpha,\betah}_{3;r} 
    }_{L^1(0,T;\fB^{\f{2}{p},\frac{1}{p}}_p)}    
    +
    \n{u_{\rm h}}_{ \mathcal{H}^{\f{2}{p}-1,\frac{1}{p};M}_{p;r;\lambda f_M}(0,T) } \\
    \leq{}&
    C
    \n{u_3}_{\mathcal{V}^M_{p;r}(0,T)}^{1-\theta} 
    \n{u_{\rm h}}_{\mathcal{H}^M_{p;r;\lambda f_M}(0,T)}^{\theta}
    +
    \n{u_{\rm h}}_{ \mathcal{H}^{\f{2}{p}-1,\frac{1}{p};M}_{p;r;\lambda f_M}(0,T) } .
\end{align}
{\it Step 2. Analytic estimate for $u_{\rm h}$.} 
A direct calculation shows that\footnote{In what follows, we interpret that $\sum_{(\alpha,\beta) \in \varnothing} A_{\alpha,\beta} = 0$.}
\begin{align}
  \p_{t,x} ^{\alpha,\beta}
  \lp{(u \cdot \nabla)u_{\rm h} + \nablah P}
  ={}&
  \p_{t,x} ^{\alpha,\beta} \sp{  \sp{\uh\cdot \nablah} \uh }
  +
  u_3 \p_{x_3} \p_{t,x} ^{\alpha,\beta} \uh \\
  & 
  + 
  \sum_{
  \substack{ \alpha=\alpha'+\alpha'' \\ \betah=\betah'+\betah'' \\ 
  (\alpha',\betah')\neq (0,0)}
  }
  \f{ \alpha! }{ \alpha'! \alpha''!  } \f{ \betah! }{ \betah'! \betah''!  }
  \sp{
  \p_{t,\xh}^{\alpha',\betah' } u_3 
  } \p_{x_3}  
  \sp{
  \p_{t,\xh}^{\alpha'',\betah'' } \p_{x_3}^{\beta_3} \uh
  } \\
  &
  +
  \sum_{
  \substack{ \alpha=\alpha'+\alpha'' \\  \beta=\beta'+\beta'' \\ 
  \beta_3' \neq 0}
  }
  \f{ \alpha! }{ \alpha'! \alpha''!  } \f{ \beta! }{ \beta'! \beta''!  }
  \sp{
  - \divh \p_{t,x}^{ \alpha',\beta'-e_3 } \uh 
  }
  \p_{t,x}^{ \alpha'',\beta''+e_3 } \uh \\
  &
  +
  \nablah \p_{t,x} ^{\alpha,\beta} P ,
\end{align}
which implies that
\begin{align}
    \lp{(u \cdot \nabla)u + \nabla P}_{{\rm h};\lambda f_M}^{\alpha,\beta}
    ={}&
    \sum_{
    \substack{ \alpha=\alpha'+\alpha'' \\  \beta=\beta'+\beta''}
    } 
    \sp{
    u^{\alpha',\beta'}_{{\rm h};r;\lambda f_M}\cdot \nablah
    } 
    u^{\alpha'',\beta''}_{{\rm h};r;\lambda f_M}
    \\
    &
    + 
    u_{3} \p_{x_3} u^{\alpha,\beta}_{{\rm h};r;\lambda f_M} 
    +
    \sum_{
    \substack{ \alpha=\alpha'+\alpha'' \\ \betah=\betah'+\betah'' \\ 
    (\alpha',\betah')\neq (0,0)}
    }
    u^{\alpha',\betah'}_{3;r}
    \p_{x_3}   u^{\alpha'',\betah'',\beta_3 }_{{\rm h};r;\lambda f_M}
    \\
    & 
    +
    \sum_{
    \substack{ \alpha=\alpha'+\alpha'' \\  \beta=\beta'+\beta'' \\ 
    \beta_3' \neq 0}
    } 
    \f{ \beta_3''+1  }{ \beta_3'  }    
    \sp{
    - \divh   u^{\alpha',\beta'-e_3}_{{\rm h};r;\lambda f_M}
    } 
    u^{\alpha'',\beta''+e_3}_{{\rm h};r;\lambda f_M}
    +\nablah P^{\alpha,\beta}_{r;\lambda f_M}.
\end{align}
Thus,
it follows from Proposition \ref{prop:anal-lin} that for any $(s,\sigma)\in \Lambda_{p,\theta}^{\rm h}$,
\begin{align}
    \n{u_{\rm h}}_{\mathcal{H}^{s,\sigma;M}_{p;r;\lambda f_M}(0,T)}
    \leq {}&
    C \n{ 
    u_{0,\rm h}
    }_{D^{s,\sigma}_{p,\theta}} 
    +   
    C r 
    \n{u_{\rm h}}_{\mathcal{H}^{s,\sigma;M}_{p;r;\lambda f_M}(0,T)}
    \\
    &
    +
    C
    \n{\lp{(u \cdot \nabla)u + \nabla P}_{{\rm h};\lambda f_M}^{\alpha,\beta}}_{L^1(0,T;\fB_p^{s,\sigma})}
    \\
    \leq {}&
    C \n{ 
    u_{0,\rm h}
    }_{D^{s,\sigma}_{p,\theta}} 
    +   
    C r 
    \n{u_{\rm h}}_{\mathcal{H}^{s,\sigma;M}_{p;r;\lambda f_M}(0,T)}
    +
    C
    \sum_{m=1}^5
    \mathcal{I}^{s,\sigma;M}_{m},
\end{align}
where we have set
\begin{align}
    \mathcal{I}^{s,\sigma;M}_{1}
    &:=
    \sum_{
    \substack{0 \leq \alpha \leq M \\ \beta \in (\mathbb{N}\cup \{ 0 \})^3}
    }
    \sum_{
    \substack{ \alpha=\alpha'+\alpha'' \\ \beta=\beta'+\beta'' \\ 
    \beta_3' \neq 0}
    } 
    \f{ \beta_3''+1  }{ \beta_3'  } 
    \n{
    \sp{\divh   u^{\alpha',\beta'-e_3}_{{\rm h};r;\lambda f_M} }  
    u^{\alpha'',\beta''+e_3}_{{\rm h};r;\lambda f_M}
    }_{L^1(0,T;\fB^{s,\sigma}_p)}  \\
    \mathcal{I}^{s,\sigma;M}_2
    &:=
    \sum_{
    \substack{0 \leq \alpha \leq M \\ \beta \in (\mathbb{N}\cup \{ 0 \})^3}
    }
    \n{
    u_{3} \p_{x_3} u^{\alpha,\beta}_{{\rm h};r;\lambda f_M}
    }_{L^1(0,T;\fB^{s,\sigma}_p)} 
    \\
    \mathcal{I}^{s,\sigma;M}_{3}
    &:=
    \sum_{
    \substack{0 \leq \alpha \leq M \\ \beta \in (\mathbb{N}\cup \{ 0 \})^3}
    }
    \sum_{
    \substack{ \alpha=\alpha'+\alpha'' \\ \beta=\beta'+\beta''}
    }
    \n{
    \sp{
    u^{\alpha',\beta'}_{{\rm h};r;\lambda f_M}\cdot \nablah
    }
    u^{\alpha'',\beta''}_{{\rm h};r;\lambda f_M}
    }_{L^1(0,T;\fB^{s,\sigma}_p)}  \\
    \mathcal{I}^{s,\sigma;M}_{4}
    &:=
    \sum_{
    \substack{0 \leq \alpha \leq M \\ \beta \in (\mathbb{N}\cup \{ 0 \})^3}
    }
    \sum_{
    \substack{ \alpha=\alpha'+\alpha'' \\
    \betah=\betah'+\betah'' \\ 
    (\alpha',\betah')\neq (0,0)}
    } 
    \n{
    u^{\alpha',\betah'}_{3;r} 
    \p_{x_3}   u^{\alpha'',(\betah'',\beta_3) }_{{\rm h};r;\lambda f_M}
    }_{L^1(0,T;\fB^{s,\sigma}_p)}   
    \\
    \mathcal{I}^{s,\sigma;M}_{5}
    &:=
    \sum_{
    \substack{0 \leq \alpha \leq M \\ \beta \in (\mathbb{N}\cup \{ 0 \})^3}
    }
    \n{ 
    \nablah P^{\alpha,\beta}_{\lambda f_M} 
    }_{L^1(0,T;\fB^{s,\sigma}_p)}.
\end{align}
We remark that $\mathcal{I}_1^{s,\sigma;M}$ and $\mathcal{I}_5^{s,\sigma;M}$ are the two most harmful terms in the nonlinear estimates.

For the estimate of $\mathcal{I}_1^{s,\sigma;M}$, it holds
\begin{align}
&
\mathcal{I}_1^{s,\sigma;M}
=
\sum_{
    \substack{0 \leq \alpha \leq M \\ \betah \in (\mathbb{N}\cup \{ 0 \})^2\\ \beta_3 \geq 1} 
    }
\sum_{
  \substack{ \alpha=\alpha'+\alpha'' \\ \beta=\beta'+\beta'' \\ 
  \beta_3' \geq 1}  
  } 
  \f{ \beta_3''+1  }{ \beta_3'  } 
\n{
\sp{\divh   u^{\alpha',\beta'-e_3}_{{\rm h};r;\lambda f_M} }  
u^{\alpha'',\beta''+e_3}_{{\rm h};r;\lambda f_M}
}_{L^1(0,T;\fB^{s,\sigma}_p)}  
\\
& \quad 
\leq C
\sum_{
    \substack{0 \leq \alpha \leq M \\ \betah \in (\mathbb{N}\cup \{ 0 \})^2\\ \beta_3 \geq 1}
    }
    \sum_{
  \substack{ \alpha=\alpha'+\alpha'' \\ \beta=\beta'+\beta'' \\ 
  \beta_3' \geq 1}  
  } 
  \f{ \beta_3''+1  }{ \beta_3'  } 
  \int_0^T 
  \n{
  u^{\alpha',\beta'-e_3}_{{\rm h};r;\lambda f_M}
  }_{ \fB^{\f{2}{p}+1,\f{1}{p}}_p }   
  \n{
  u^{\alpha'',\beta''+e_3}_{{\rm h};r;\lambda f_M}
  }_{ \fB^{s,\sigma}_p }
dt \\
& \quad 
=C
\sum_{
    \substack{0 \leq \alpha \leq M \\ \betah \in (\mathbb{N}\cup \{ 0 \})^2\\ \beta_3 \geq 1}
    }
    \sum_{
  \substack{ \alpha=\alpha'+\alpha'' \\ \beta=\beta'+\beta'' \\ 
  \beta_3'' \geq 1}  
  } 
  \f{ \beta_3''  }{ \beta_3' +1 } 
  \int_0^T 
  \n{
  u^{\alpha',\beta'}_{{\rm h};r;\lambda f_M}
  }_{ \fB^{\f{2}{p}+1,\f{1}{p}}_p } 
  \n{
  u^{\alpha'',\beta''}_{{\rm h};r;\lambda f_M}
  }_{ \fB^{s,\sigma}_p }  
dt \label{eq:B-1+1}\\
& \quad 
\leq C
 \sum_{ \substack{ 0 \leq \alpha'' \leq M \\ \betah'' \in (\mathbb{N}\cup \{ 0 \})^2, \beta''_3 \geq 1  }  } 
\beta''_3 
  \int_0^T 
  \sum_{ \substack{ 0 \leq \alpha' \leq M   \\   \beta' \in (\mathbb{N}\cup \{ 0 \})^3
}}
\n{
  u^{\alpha',\beta'}_{{\rm h};r;\lambda f_M}
  }_{ \fB^{\f{2}{p}+1,\f{1}{p}}_p } 
  \n{
  u^{\alpha'',\beta''}_{{\rm h};r;\lambda f_M}
  }_{ \fB^{s,\sigma}_p }
dt \\
& \quad 
\leq \frac{C}{\lambda}
 \sum_{ \substack{ 0 \leq \alpha'' \leq M \\ \betah'' \in (\mathbb{N}\cup \{ 0 \})^2, \beta''_3 \geq 1  }  } 
\beta''_3 
  \int_0^T 
  \lambda f_M'(t)
  \n{
  u^{\alpha'',\beta''}_{{\rm h};r;\lambda f_M}
  }_{ \fB^{s,\sigma}_p }
dt \\
& \quad 
= \frac{C}{\lambda}
\sum_{ \substack{0 \leq \alpha'' \leq M \\  \betah'' \in (\mathbb{N}\cup \{ 0 \})^2, \beta''_3 \geq 1  }  } 
\beta''_3 
\n{
u^{\alpha'',\beta''}_{{\rm h};r;\lambda f_M}
}_{L^1_{ {  \lambda  f_M' } }(0,T;\fB^{s,\sigma}_p)} 
\\
& \quad 
\leq \f{C}{\lambda} 
 \n{u_{\rm h}}_{\mathcal{H}^{s,\sigma;M}_{p;r;\lambda f_M}(0,T)}. 
\end{align}
For the estimate of $\mathcal{I}_2^{s,\sigma;M}$, we obtain from Lemma \ref{lemm:prod} and the interpolation inequality that 
\begin{align}
& 
\mathcal{I}_2^{s,\sigma;M}
=
\sum_{
    \substack{0 \leq \alpha \leq M \\ \beta \in (\mathbb{N}\cup \{ 0 \})^3}
    } 
  \sum_{
  \substack{ \alpha=\alpha'+\alpha'' \\ \beta=\beta'+\beta''}
  }
\n{
 \sp{
  u^{\alpha',\beta'}_{{\rm h};r;\lambda f_M}\cdot \nablah
  }
   u^{\alpha'',\beta''}_{{\rm h};r;\lambda f_M}
}_{L^1(0,T;\fB^{s,\sigma}_p)}  \\
& \quad 
\leq 
\sum_{
    \substack{0 \leq \alpha' \leq M \\ \beta' \in (\mathbb{N}\cup \{ 0 \})^3}
    } 
\sum_{
    \substack{0 \leq \alpha'' \leq M \\ \beta'' \in (\mathbb{N}\cup \{ 0 \})^3}
    } 
 \n{
 \sp{
  u^{\alpha',\beta'}_{{\rm h};r;\lambda f_M}\cdot \nablah
  }
   u^{\alpha'',\beta''}_{{\rm h};r;\lambda f_M}
}_{L^1(0,T;\fB^{s,\sigma}_p)}  \label{eq:B-00}\\
& \quad 
\leq 
C 
\sum_{
    \substack{0 \leq \alpha' \leq M\\ \beta' \in (\mathbb{N}\cup \{ 0 \})^3}
    } 
\sum_{
    \substack{0 \leq \alpha'' \leq M \\ \beta'' \in (\mathbb{N}\cup \{ 0 \})^3}
    } 
\n{
u^{\alpha',\beta'}_{{\rm h};r;\lambda f_M}
}_{L^2(0,T;\fB^{\f{2}{p},\f{1}{p}}_p)} 
\n{
u^{\alpha'',\beta''}_{{\rm h};r;\lambda f_M}
}_{L^2(0,T;\fB^{s+1,\sigma}_p)}  \\   
& \quad 
\leq 
C 
\n{u_{\rm h}}_{\mathcal{H}^{s,\sigma;M}_{p;r;\lambda f_M}(0,T)}
\n{u_{\rm h}}_{\mathcal{H}^{\f{2}{p}-1,\f{1}{p};M}_{p;r;\lambda f_M}(0,T)} .   
\end{align}
For the estimate of $\mathcal{I}^{s,\sigma;M}_3$, we see by 
\begin{align}
    \partial_{x_3} u^{\alpha,\beta}_{{\rm h};r;\lambda f_M}
    =
    (\beta_3+1)u^{\alpha,\beta+e_3}_{{\rm h};r;\lambda f_M}
    e^{\lambda f_M(t)}
\end{align}
that 
\begin{align}
    &
    \n{
    u_3
    \partial_{x_3}
    u^{\alpha,\beta}_{{\rm h};r;\lambda f_M}
    }_{L^1(0,T;\fB^{s,\sigma}_p)}
    \leq 
    C
    \int_0^T
    \n{u_3(t)}_{\fB^{\f{2}{p},\frac{1}{p}}_p}
    \n{\partial_{x_3}u^{\alpha,\beta}_{{\rm h};r;\lambda f_M}(t)}_{\fB^{s,\sigma}_p}   
    dt \\
    &\quad
    \leq
    C
    e^{\lambda f_M(T)}
    (\beta_3+1)
    \int_0^T
    \n{u_3(t)}_{\fB^{\f{2}{p},\frac{1}{p}}_p}
    \n{u^{\alpha,\beta+e_3}_{{\rm h};r;\lambda f_M}(t)}_{\fB^{s,\sigma}_p}   
    dt \\
    &\quad
    \leq 
    C
    \exp
    \lp{
    \lambda \sup_{0 \leq t < T} f_M(t)
    }(\beta_3+1)
    \int_0^T 
    \n{u_3(t)}_{\fB^{\f{2}{p},\frac{1}{p}}_p}
    \n{  u^{\alpha,\beta+ e_3}_{{\rm h};r;\lambda f_M}(t)}_{\fB^{s,\sigma}_p}   
    dt\\
    &\quad
    \leq 
    \frac{C}{\lambda}
    \exp
    \lp{
    \lambda \sup_{0 \leq t < T} f_M(t)
    }(\beta_3+1)
    \n{
    u^{\alpha,\beta+ e_3}_{{\rm h};r;\lambda f_M}
    }_{L^1_{ {    \lambda f_M' } }(0,T;\fB^{s,\sigma}_p)}  .  
\end{align}
Consequently, we have 
\begin{align}
    \mathcal{I}_3^{s,\sigma;M}
    &=
    \sum_{
    \substack{0 \leq \alpha \leq M \\ \beta \in (\mathbb{N}\cup \{ 0 \})^3}
    } 
    \n{
    u_3
    \partial_{x_3}
    u^{\alpha,\beta}_{{\rm h};r;\lambda f_M}
    }_{L^1(0,T;\fB^{s,\sigma}_p)} 
    \\
    & 
    \leq 
    \frac{C}{\lambda}
    \exp
    \lp{
    \lambda \sup_{0 \leq t < T} f_M(t)
    }
    \n{u_{\rm h}}_{\mathcal{H}^{s,\sigma}_{p;r;\lambda f_M}(0,T)} .   
\end{align}
In the same spirit, it follows that 
\begin{align}
    & \sum_{
    \substack{ \alpha=\alpha'+\alpha'' \\
    \betah=\betah'+\betah'' \\ 
    (\alpha',\betah')\neq (0,0)}
    } 
    \n{
    u^{\alpha',\betah'}_{3;r} 
    \p_{x_3}   u^{\alpha'',(\betah'',\beta_3) }_{{\rm h};r;\lambda f_M}
    }_{L^1(0,T;\fB^{s,\sigma}_p)}   \\
    & \quad 
    \leq C 
    \sum_{
    \substack{ \alpha=\alpha'+\alpha'' \\
    \betah=\betah'+\betah'' \\ 
    (\alpha',\betah')\neq (0,0)}
    } 
    \int_0^T 
    \n{
    u^{\alpha',\betah'}_{3;r} 
    }_{\fB^{\f{2}{p},\frac{1}{p}}_p}  
    \n{
    \p_{x_3}   u^{\alpha'',(\betah'',\beta_3) }_{{\rm h};r;\lambda f_M}
    }_{\fB^{s,\sigma}_p}  dt \\  
    &\quad 
    \leq 
    C
    \exp
    \lp{
    \lambda \sup_{0 \leq t < T} f_M(t)
    }
    \sum_{
    \substack{ \alpha=\alpha'+\alpha'' \\
    \betah=\betah'+\betah'' \\ 
    (\alpha',\betah')\neq (0,0)}
    }
    (\beta_3+1)
    \int_0^T 
    \n{
    u^{\alpha',\betah'}_{3;r} 
    }_{\fB^{\f{2}{p},\frac{1}{p}}_p}  
    \n{
    u^{\alpha'',(\betah'',\beta_3+1) }_{{\rm h};r;\lambda f_M}
    }_{\fB^{s,\sigma}_p}  dt  \\  
    &\quad
    \leq 
    \frac{C}{\lambda}
    \exp
    \lp{
    \lambda \sup_{0 \leq t < T} f_M(t)
    }
    \sum_{
    \substack{ \alpha=\alpha'+\alpha'' \\
    \betah=\betah'+\betah'' \\ 
    (\alpha',\betah')\neq (0,0)}
    }
    (\beta_3+1)
    \n{
    u^{\alpha'',(\betah'',\beta_3+1)  }_{{\rm h};r;\lambda f_M}
    }_{L^1_{ {    \lambda f_M' } }(0,T;\fB^{s,\sigma}_p)}  , 
\end{align}
which implies
\begin{align}
    \mathcal{I}_4^{s,\sigma;M}
    &
    =
    \sum_{
    \substack{0 \leq \alpha \leq M \\ \beta \in (\mathbb{N}\cup \{ 0 \})^3}
    }  
    \sum_{
    \substack{ \alpha=\alpha'+\alpha'' \\
    \betah=\betah'+\betah'' \\ 
    (\alpha',\betah')\neq (0,0)}
    } 
    \n{
    u^{\alpha',\betah'}_{3;r} 
    \p_{x_3}   u^{\alpha'',(\betah'',\beta_3) }_{{\rm h};r;\lambda f_M}
    }_{L^1(0,T;\fB^{s,\sigma}_p)}   \\
    & 
    \leq 
    \frac{C}{\lambda}
    \exp
    \lp{
    \lambda \sup_{0 \leq t < T} f_M(t)
    }
    \n{u_{\rm h}}_{\mathcal{H}^{s,\sigma}_{p;r;\lambda f_M}(0,T)} .
\end{align}
Finally, we consider the estimate of $\mathcal{I}_5^{s,\sigma;M}$.
Observe that the pressure may be decomposed as 
\begin{align}
    P
    &=
    \sum_{\ell,m=1}^2
    \partial_{x_{\ell}}\partial_{x_m}(-\Delta)^{-1}
    (u_{\ell}u_m)
    +
    2
    \sum_{m=1}^2
    \partial_{x_3}\partial_{x_m}(-\Delta)^{-1}
    (u_3u_m)
    +
    \partial_{x_3}^2(-\Delta)^{-1}
    (u_3^2)  \nonumber  \\
    &=
    \sum_{\ell,m=1}^2
    \partial_{x_{\ell}}\partial_{x_m}(-\Delta)^{-1}
    (u_{\ell}u_m)
    +
    2
    \partial_{x_3}(-\Delta)^{-1}
    (u_{\rm h}\cdot \nablah u_3)
     \label{eq:decom-p} \\
    &=:
    P_1 + P_2.   \nonumber 
\end{align}
Now we set 
\begin{align}
    P^{\alpha,\beta}_{i;r}
    :=
    \frac{r^{\alpha+|\betah|}t^{\alpha+\f{|\betah|}{2}}\rho_0^{\beta_3}e^{- \lambda \beta_3 f_M(t)  }}{\alpha!\beta!}
    {\partial_{t}^{\alpha}  
    \partial_{x}^{\beta}  P_i  
    },
    \qquad i=1,2. 
\end{align}
For the estimate of $P^{\alpha,\beta}_{1;r}$, it holds 
\begin{align}
    \nablah P^{\alpha,\beta}_{1;r}
    =
    2 
    \sum_{
  \substack{ \alpha=\alpha'+\alpha'' \\ \beta=\beta'+\beta''}
  } 
  \sum_{\ell,m=1}^2
    \partial_{x_{\ell}}\partial_{x_m}(-\Delta)^{-1}
   \sp{  u^{\alpha',\beta'}_{\ell;r;\lambda f_M}
    u^{\alpha'',\beta''}_{m;r;\lambda f_M}
    }.
\end{align}
From the above relation and Lemma \ref{lemm:prod}, we infer for any $(s,\sigma)\in \Lambda_{p,\theta}^{\rm h}$ and $p,\theta$ satisfying \eqref{p-theta} that  
\begin{align}
&
\sum_{
    \substack{0 \leq \alpha \leq M \\ \beta \in (\mathbb{N}\cup \{ 0 \})^3}
    }     
    \n{
    \nablah P^{\alpha,\beta}_{1;r}
    }_{L^1(0,T;\fB^{s,\sigma}_p)} \\
& \quad 
\leq 
C 
\sum_{
    \substack{0 \leq \alpha' \leq M \\ \beta' \in (\mathbb{N}\cup \{ 0 \})^3}
    } 
\sum_{
    \substack{0 \leq \alpha''\leq M \\ \beta'' \in (\mathbb{N}\cup \{ 0 \})^3}
    } 
\n{
u^{\alpha',\beta'}_{{\rm h};r;\lambda f_M}
}_{L^2(0,T;\fB^{\f{2}{p},\f{1}{p}}_p)} 
\n{
u^{\alpha'',\beta''}_{{\rm h};r;\lambda f_M}
}_{L^2(0,T;\fB^{s,\sigma}_p)}  \\
& \quad 
\leq 
C 
\n{u_{\rm h}}_{\mathcal{H}^{s,\sigma;M}_{p;r;\lambda f_M}(0,T)}
\n{u_{\rm h}}_{\mathcal{H}^{\f{2}{p}-1,\f{1}{p};M}_{p;r;\lambda f_M}(0,T)} . 
\end{align}
Next, we focus on the estimate of $ \nablah P^{\alpha,\beta}_{2;r}$. Using 
\begin{align}
    v_{\rm h}\cdot \nablah \partial_{x_3}w_3
    =
    -
    v_{\rm h}\cdot \nablah \divh w_{\rm h}
    =
    -
    \divh (v_{\rm h}\divh w_{\rm h})
    +
    \divh v_{\rm} \divh w_{\rm h}
\end{align}
for divergence-free vector fields $v$ and $w$,
we have 
\begin{align}
    \nablah P^{\alpha,\beta}_{2;r} 
    &=
    2 
    \sum_{
    \substack{ \alpha=\alpha'+\alpha'' \\ \beta=\beta'+\beta''}
    } 
    \sum_{
    \substack{ \beta_3=\beta_3'+\beta_3'' \\ \beta_3'' \geq 1}
    }   
    \f{r^{\alpha+|\betah|}t^{ \alpha+\f{|\betah|}{2}  }\rho_0^{\beta_3}e^{- \lambda \beta_3 f_M(t)  }}{ \alpha'! \alpha''!   \beta'! \beta''!   }
    \\
    & \qquad\qquad \qquad\qquad\qquad
    \nablah \p_{x_3}(-\Delta)^{-1} \sp{{
    \p_{t,x}^{\alpha',\beta' }
    \uh
    \cdot \nablah} 
    {
    \p_{t,x}^{\alpha'',\beta''  }
    u_3
    }}  \\
    & \quad 
    + 2 \f{r^{\alpha+|\betah|}t^{ \alpha+\f{|\betah|}{2}  }\rho_0^{\beta_3}e^{- \lambda \beta_3 f_M(t)  }}{ \alpha! \beta!   } 
    \nablah \p_{x_3}(-\Delta)^{-1}
    \sp{
    \p_{t,x}^{\alpha,\beta} \uh 
    \cdot \nablah u_3 }  \\ 
    & \quad 
    + 
    2
    \sum_{
    \substack{ \alpha=\alpha'+\alpha'' \\ \betah=\betah'+\betah'' \\
    (\alpha'',\betah'')\neq (0,0)
    } } 
    \f{r^{\alpha+|\betah|}t^{ \alpha+\f{|\betah|}{2}  }\rho_0^{\beta_3}e^{- \lambda \beta_3 f_M(t)  }}{ \alpha'! \alpha''!   \betah'! \betah''! \beta_3! }
    \\
    & \qquad\qquad \qquad\qquad
    \nablah \p_{x_3}(-\Delta)^{-1}  
    \sp{{
    \p_{t,\xh}^{\alpha,\betah' }
    \p_{x_3}^{\beta_3}  
    \uh
    \cdot \nablah} 
    {
    \p_{t,\xh}^{\alpha'',\betah'' }
    u_3
    }}
    \\ 
    &  =  
    -2   
    \sum_{
    \substack{ \alpha=\alpha'+\alpha'' \\ \betah=\betah'+\betah''}
    } 
    \sum_{
    \substack{ \beta_3=\beta_3'+\beta_3'' \\ \beta_3'' \geq 1}
    }   
    \f{r^{\alpha+|\betah|}t^{ \alpha+\f{|\betah|}{2}  }\rho_0^{\beta_3}e^{- \lambda \beta_3 f_M(t)  }}{ \alpha'! \alpha''!   \beta'! \beta''!   }
    \\
    & 
    \qquad\qquad \qquad\qquad\qquad
    \nablah \p_{x_3}(-\Delta)^{-1}
    \sp{
    {
    \p_{t,x}^{\alpha',\beta' }
    \uh
    } 
    {
    \p_{t,x}^{\alpha'',\beta''-e_3 } 
    \divh 
    \uh  
    }}
    \\
    & \quad  
    +  
    2    
    \sum_{
    \substack{ \alpha=\alpha'+\alpha'' \\ \betah=\betah'+\betah''}
    } 
    \sum_{
    \substack{ \beta_3=\beta_3'+\beta_3'' \\ \beta_3'' \geq 1}
    }   
    \f{r^{\alpha+|\betah|}t^{ \alpha+\f{|\betah|}{2}  }\rho_0^{\beta_3}e^{- \lambda \beta_3 f_M(t)  }}{ \alpha'! \alpha''!   \beta'! \beta''!   }
    \\
    &
    \qquad\qquad \qquad\qquad\qquad
    \nablah \p_{x_3}(-\Delta)^{-1}
    \sp{
    {
    \p_{t,x}^{\alpha',\beta' }
    \divh  \uh
    } 
    {
    \p_{t,x}^{\alpha'',\beta''-e_3} 
    \divh  \uh  
    }}
    \\
    & \quad      
    + 
    2     
    \nablah \p_{x_3}(-\Delta)^{-1}
    \sp{
    u^{\alpha,\beta}_{{\rm h};r;\lambda f_M} 
    \cdot \nablah u_3 
    }  \\ 
    & \quad 
    + 
    2
    \sum_{
    \substack{ \alpha=\alpha'+\alpha'' \\ \betah=\betah'+\betah'' \\
    (\alpha'',\betah'')\neq (0,0)
    } } 
    \nablah \p_{x_3}(-\Delta)^{-1} 
    \sp{
    u^{\alpha,(\betah',\beta_3)}_{{\rm h};r;\lambda f_M}
    \cdot \nablah
    u_{3;r}^{\alpha'',\betah'' }
    }
    \\ 
    & =:
    \nablah P_{2,1;r}^{\alpha,\beta}  
    +
    \nablah P_{2,2;r}^{\alpha,\beta}  
    +
    \nablah P_{2,3;r}^{\alpha,\beta}  
    +
    \nablah P_{2,4;r}^{\alpha,\beta}. 
\end{align}
We begin with the estimate of $P_{2,1}^{\alpha,\beta} $. 
It follows from 
\begin{align}
    &
    \nablah \p_{x_3} \divh (-\Delta)^{-1} 
    \sp{
    \p_{t,x}^{\alpha',\beta' }
    \uh
    \p_{t,x}^{\alpha'',\beta''-e_3 } 
    \divh 
    \uh  
    }
    \\
    &\quad
    =
    \nablah \divh (-\Delta)^{-1} 
    \sp{
    \p_{t,x}^{\alpha',\beta'+e_3 }
    \uh
    \p_{t,x}^{\alpha'',\beta''-e_3 } 
    \divh 
    \uh  
    }
    \\
    &\qquad
    +
    \nablah \divh (-\Delta)^{-1} 
    \sp{
    \p_{t,x}^{\alpha',\beta' }
    \uh
    \p_{t,x}^{\alpha'',\beta'' } 
    \divh 
    \uh  
    }
\end{align}
that
\begin{align} 
    \nablah P_{2,1;r}^{\alpha,\beta}  
    &= 
    -2   
    \sum_{
    \substack{ \alpha=\alpha'+\alpha'' \\ \betah=\betah'+\betah''}
    } 
    \sum_{
    \substack{ \beta_3=\beta_3'+\beta_3'' \\ \beta_3'' \geq 1}
    }   
    \f{\beta_3'+1}{\beta_3''}  \nablah  \divh (-\Delta)^{-1}
    \sp{
    u^{\alpha',\beta'+ e_3}_{{\rm h};r;\lambda f_M}
    \divh  
    u^{\alpha'',\beta''- e_3}_{{\rm h};r;\lambda f_M}} 
    \\
    & \quad 
    -2   
    \sum_{
    \substack{ \alpha=\alpha'+\alpha'' \\ \betah=\betah'+\betah''}
    } 
    \sum_{
    \substack{ \beta_3=\beta_3'+\beta_3'' \\ \beta_3'' \geq 1}
    }   
    \nablah  \divh (-\Delta)^{-1} 
    \sp{
    u^{\alpha',\beta'}_{{\rm h};r;\lambda f_M}
    \divh u^{\alpha'',\beta''}_{{\rm h};r;\lambda f_M}},
\end{align}  
which and the same arguments as in \eqref{eq:B-1+1} and \eqref{eq:B-00} imply that for any $(s,\sigma)\in \Lambda_{p,\theta}^{\rm h}$,
\begin{align}
    &
    \sum_{
    \substack{0 \leq \alpha \leq M \\ \beta \in (\mathbb{N}\cup \{ 0 \})^3, \beta_3 \geq 1}
    }
    \n{ \nablah P_{2,1;r}^{\alpha,\beta}  }_{ L^1 (0,T; \fB^{s,\sigma}_p ) }   \\
    & \qquad 
    \leq 
    C
    \n{u_{\rm h}}_{\mathcal{H}^{s,\sigma;M}_{p;r;\lambda f_M}(0,T)}
    \n{u_{\rm h}}_{\mathcal{H}^{\f{2}{p}-1,\frac{1}{p};M}_{p;r;\lambda f_M}(0,T)}
    +
    \frac{C}{\lambda}
    \n{u_{\rm h}}_{\mathcal{H}^{s,\sigma;M}_{p;r;\lambda f_M}(0,T)}.
\end{align} 
Next, we consider the estimate of $\nablah P_{2,3;r}^{\alpha,\beta}$. By Lemma \ref{lemm:prod}, it holds that 
\begin{align}
    \n{
    \nablah P_{2,3;r}^{\alpha,\beta}  
    }_{L^1(0,T;\fB^{s,\sigma}_p)} 
    &\leq
    C
    \int_0^T
    \n{
    u^{\alpha,\beta}_{{\rm h};r;\lambda f_M} (t) \cdot \nablah u_3(t)
    }_{\fB^{s,\sigma}_p}
    dt \\
    & 
    \leq 
    C \int_0^T
    \n{ u_3(t) }_{\fB^{\f{2}{p}+1,\f{1}{p}}_p}  
    \n{  u^{\alpha,\beta}_{{\rm h};r;\lambda f_M}(t)}_{\fB^{s,\sigma}_p}
    dt, 
\end{align}
which
gives rise to  
\begin{align}
    \sum_{
    \substack{0 \leq \alpha \leq M \\ \beta \in (\mathbb{N}\cup \{ 0 \})^3}
    }
    \n{
    \nablah P_{2,3;r}^{\alpha,\beta}  
    }_{L^1(0,T;\fB^{s,\sigma}_p)} 
    \leq {}
    C 
    \int_0^T
    \n{ u_3 (t)}_{\fB^{\f{2}{p}+1,\f{1}{p}}_p}  
    \n{u_{\rm h}}_{\mathcal{H}^{s,\sigma;M}_{p;r;\lambda f_M}(0,t)}
    dt.  
\end{align}
For the estimate of $\nablah P_{2,4;r}^{\alpha,\beta}$, it follows from Lemma \ref{lemm:prod} that
\begin{align}
    & \n{ 
    \nablah P_{2,4;r}^{\alpha,\beta}
    }_{L^1(0,T;\fB^{s,\sigma}_p)}  \\
    & \quad 
    \leq C
    \sum_{
    \substack{ \alpha=\alpha'+\alpha'' \\ \betah=\betah'+\betah'' \\
    (\alpha'',\betah'')\neq (0,0)
    } } 
    \int_0^T
    \n{
    u^{\alpha',(\betah',\beta_3)}_{{\rm h};r;\lambda f_M}(t)
    }_{\fB^{s,\sigma}_p} 
    \n{
    u_{3;r}^{ \alpha'',\betah''  }(t)
    }_{\fB^{\f{2}{p}+1,\f{1}{p}}_p} dt  \\
    & \quad 
    \leq C
    \sum_{
    \substack{ \alpha=\alpha'+\alpha'' \\ \betah=\betah'+\betah'' 
    } } 
    \int_0^T
    \n{
    u^{\alpha',(\betah',\beta_3)}_{{\rm h};r;\lambda f_M}(t)
    }_{\fB^{s,\sigma}_p} 
    \n{
    u_{3;r}^{ \alpha'',\betah''  }(t)
    }_{\fB^{\f{2}{p}+1,\f{1}{p}}_p} dt , 
\end{align}
which yields  
\begin{align}
    & \sum_{
    \substack{0 \leq \alpha \leq M \\ \beta \in (\mathbb{N}\cup \{ 0 \})^3}
    }
    \n{ 
    \nablah P_{2,4;r}^{\alpha,\beta}
    }_{L^1(0,T;\fB^{s,\sigma}_p)} \\
    & \quad 
    \leq C
    \sum_{
    \substack{0 \leq \alpha' \leq M \\ \beta' \in (\mathbb{N}\cup \{ 0 \})^3}}
    \sum_{
    \substack{0 \leq \alpha'' \leq M \\ \betah'' \in (\mathbb{N}\cup \{ 0 \})^2}}
    \int_0^T
    \n{
    u^{\alpha',\beta'}_{{\rm h};r;\lambda f_M}(t)
    }_{\fB^{s,\sigma}_p} 
    \n{
    u_{3;r}^{ \alpha'',\betah''  }(t)
    }_{\fB^{\f{2}{p}+1,\f{1}{p}}_p} dt \\
    & \quad 
    \leq 
    C 
    \int_0^T  
    \sum_{
    \substack{0 \leq \alpha'' \leq M \\ \betah'' \in (\mathbb{N}\cup \{ 0 \})^2}
    }
    \n{
    u_{3;r}^{ \alpha'',\betah'' }(t)
    }_{\fB^{\f{2}{p}+1,\f{1}{p}}_p}
    \n{u_{\rm h}}_{\mathcal{H}^{s,\sigma;M}_{p;r;\lambda f_M}(0,t)}  dt.
\end{align}

It remains to consider the estimate of $\nablah P_{2,2;r}^{\alpha,\beta}$ which is more involved. To this end, we see from Lemma \ref{lemm:prod} 
that  
\begin{align}
    &\n{\nablah P_{2,2;r}^{\alpha,\beta} }_{   \fB^{\f{2}{p}-1,\f{1}{p}}_p  }\\
    &
    \leq C    
    \sum_{
    \substack{ \alpha=\alpha'+\alpha'' \\ \betah=\betah'+\betah''}
    } 
    \sum_{
    \substack{ \beta_3=\beta_3'+\beta_3'' \\ \beta_3'' \geq 1}
    }   
    \f{r^{\alpha+|\betah|}t^{ \alpha+\f{|\betah|}{2}  }\rho_0^{\beta_3}e^{- \lambda \beta_3 f_M(t)  }  }{ \alpha'! \alpha''!   \beta'! \beta''!   }
    \\
    & \qquad 
    \sum_{j,k\in \mathbb{Z}} \f{ 2^k 2^j  }{ 2^{2k}+2^{2j} } 
    2^{(\f{2}{p}-1)k} 2^{\f{1}{p}j} 
    \n{
    \Deltahh_k\Deltav_j  \sp{
    {
    \p_{t,x}^{\alpha',\beta' }
    \divh  \uh
    } 
    {
    \p_{t,x}^{\alpha'',\beta''-e_3 }  
    \divh  \uh  
    } }   
    }_{L^p} \\ 
    &= C    
    \sum_{
    \substack{ \alpha=\alpha'+\alpha'' \\ \betah=\betah'+\betah''}
    } 
    \sum_{
    \substack{ \beta_3=\beta_3'+\beta_3'' \\ \beta_3'' \geq 1}
    }   
    \f{r^{\alpha+|\betah|}t^{ \alpha+\f{|\betah|}{2}  }\rho_0^{\beta_3}e^{- \lambda \beta_3 f_M(t)  }  }{ \alpha'! \alpha''!   \beta'! \beta''!   }
    \\
    & \quad \quad 
    \sum_{j,k\in \mathbb{Z}} \f{ 2^{(2-\theta)k} 2^{\theta j} }
    { 2^{2k}+2^{2j} } 
    2^{ (\f{2}{p} -2+\theta)  k }   2^{ (1+\f{1}{p}-\theta) j}     
    \n{
    \Deltahh_k\Deltav_j  \sp{
    {
    \p_{t,x}^{\alpha',\beta' }
    \divh  \uh
    } 
    {
    \p_{t,x}^{\alpha'',\beta''-e_3 }  
    \divh  \uh  
    } }   
    }_{L^p}   \\ 
    &\leq C    
    \sum_{
    \substack{ \alpha=\alpha'+\alpha'' \\ \betah=\betah'+\betah''}
    } 
    \sum_{
    \substack{ \beta_3=\beta_3'+\beta_3'' \\ \beta_3'' \geq 1}
    }   
    \f{r^{\alpha+|\betah|}t^{ \alpha+\f{|\betah|}{2}  }\rho_0^{\beta_3}e^{- \lambda \beta_3 f_M(t)  }  }{ \alpha'! \alpha''!   \beta'! \beta''!   }   \\
    & \qquad 
    \n{
    {
    \p_{t,x}^{\alpha',\beta' }
    \divh  \uh
    } 
    {
    \p_{t,x}^{\alpha'',\beta''-e_3 }  
    \divh  \uh  
    } 
    }_{  \fB^{\f{2}{p}-2+\theta,1+\f{1}{p}-\theta}_p  } , 
\end{align}
which and Lemma \ref{lemm:prod} yield
\begin{align}
    \n{\nablah P_{2,2;r}^{\alpha,\beta} }_{   \fB^{\f{2}{p}-1,\f{1}{p}}_p  }
    &\leq C    
    \sum_{
    \substack{ \alpha=\alpha'+\alpha'' \\ \betah=\betah'+\betah''}
    } 
    \sum_{
    \substack{ \beta_3=\beta_3'+\beta_3'' \\ \beta_3'' \geq 1}
    }   
    \f{r^{\alpha+|\betah|}t^{ \alpha+\f{|\betah|}{2}  }\rho_0^{\beta_3}e^{- \lambda \beta_3 f_M(t)  }  }{ \alpha'! \alpha''!   \beta'! \beta''!   }\\
    &\qquad 
    \left(
    \n{
    \p_{t,x}^{\alpha',\beta' }
    \divh  \uh
    }_{    \fB^{\f{2}{p}-2+\theta,\f{1}{p}-\theta}_p    } 
    \n{
    \p_{t,x}^{\alpha'',\beta''-e_3 }   
    \divh  \uh  
    }_{ \fB^{\f{2}{p},1+\f{1}{p}}_p } \right. \\
    & \qquad 
    \left.
    +
    \n{
    \p_{t,x}^{\alpha',\beta' }
    \divh  \uh
    }_{ \fB^{\f{2}{p}-2+\theta, 1+\f{1}{p}-\theta}_p } 
    \n{
    \p_{t,x}^{\alpha'',\beta''-e_3 }  
    \divh  \uh  
    }_{ \fB^{\f{2}{p},\f{1}{p}}_p } \right) \\
    &\leq C    
    \sum_{
    \substack{ \alpha=\alpha'+\alpha'' \\ \betah=\betah'+\betah''}
    } 
    \sum_{
    \substack{ \beta_3=\beta_3'+\beta_3'' \\ \beta_3'' \geq 1}
    }   
    \f{r^{\alpha+|\betah|}t^{ \alpha+\f{|\betah|}{2}  }\rho_0^{\beta_3}e^{- \lambda \beta_3 f_M(t)  }  }{ \alpha'! \alpha''!   \beta'! \beta''!   }\\  
    & \qquad 
    \left(  
    \n{
    \p_{t,x}^{\alpha',\beta' }
    \uh
    }_{ \fB^{\f{2}{p}-1+\theta,  \f{1}{p}-\theta }_p } 
    \n{
    \p_{t,x}^{\alpha'',\beta'' }  
    \uh  
    }_{ \fB^{\f{2}{p}+1,\f{1}{p}}_p } \right. \\
    &\left. \quad \qquad 
    +  
    \n{
    \p_{t,x}^{\alpha'',\beta''-e_3 } 
    \uh  
    }_{ \fB^{\f{2}{p}+1,\f{1}{p}}_p }
    \n{
    \p_{t,x}^{\alpha',\beta' +e_3 }
    \uh
    }_{ \fB^{\f{2}{p}-1+\theta , \f{1}{p}-\theta  }_p } 
    \right) . \label{22r:1}
\end{align}
Hence, it holds
\begin{align}
    &\sum_{
    \substack{0 \leq \alpha \leq M \\ \betah \in (\mathbb{N}\cup \{ 0 \})^2\\ \beta_3 \geq 1}
    }
    \n{
    \nablah
    P_{2,2;r}^{\alpha,\beta}
    }_{ L^1(0,T; \fB^{\f{2}{p}-1,\frac{1}{p}}_p ) } 
    \\
    & 
    \quad 
    \leq 
    C
    \sum_{
    \substack{0 \leq \alpha \leq M \\ \betah \in (\mathbb{N}\cup \{ 0 \})^2 \\ \beta_3 \geq 1}
    }  
    \sum_{
    \substack{ \alpha=\alpha'+\alpha'' \\ \beta=\beta'+\beta''\\ \beta_3'' \geq 1}
    }   
    \n{
    u^{\alpha',\beta'}_{{\rm h};r;\lambda f_M}
    }_{  L^{\infty}(0,T; \fB^{\f{2}{p}-1+\theta,  \f{1}{p}-\theta }_p) } 
    \n{
    u^{\alpha'',\beta''}_{{\rm h};r;\lambda f_M}
    }_{ L^1(0,T;   \fB^{\f{2}{p}+1,\f{1}{p}}_p )   }   
    \\
    &\qquad
    +
    C 
    \sum_{
    \substack{0 \leq \alpha \leq M \\ \betah \in (\mathbb{N}\cup \{ 0 \})^2 \\ \beta_3 \geq 1}
    } 
    \sum_{
    \substack{ \alpha=\alpha'+\alpha'' \\ \beta=\beta'+\beta''\\ \beta_3'' \geq 1}
    }   
    \frac{ \beta_3'+1 }{ \beta_3''  }
    \int_0^T
    \n{
    u^{\alpha'',\beta_3''-e_3}_{{\rm h};r;\lambda f_M}
    }_{ \fB^{\f{2}{p}+1,\f{1}{p}}_p }
    \n{
    u^{\alpha',\beta_3'+e_3}_{{\rm h};r;\lambda f_M}
    }_{ \fB^{\f{2}{p}-1+\theta , \f{1}{p}-\theta  }_p } 
    dt
    \\
    & 
    \quad 
    \leq 
    C
    \n{u_{\rm h}}_{\mathcal{H}^{\f{2}{p}-1+\theta,  \f{1}{p}-\theta}_{p;r;\lambda f_M}(0,T)}
    \n{u_{\rm h}}_{\mathcal{H}^{\frac{2}{p}-1,\f{1}{p}}_{p;r;\lambda f_M}(0,T)}
    +
    \f{C}{\lambda}
    \n{u_{\rm h}}_{\mathcal{H}^{ \f{2}{p}-1+\theta , \f{1}{p}-\theta }_{p;r;\lambda f_M} (0,T)}, 
\end{align}
where, in the last step, we have invoked similar arguments of \eqref{eq:B-1+1}. 
We see that 
\begin{align}
    & \n{\nablah P_{2,2;r}^{\alpha,\beta}  }_{ \fB^{\f{2}{p}-2+\theta, \f{1}{p}-\theta}_p }
    \\
    &\leq C    
    \sum_{
    \substack{ \alpha=\alpha'+\alpha'' \\ \beta=\beta'+\beta'' \\ \beta_3'' \geq 1}
    }   
    \f{r^{\alpha+|\betah|}t^{ \alpha+\f{|\betah|}{2}  }\rho_0^{\beta_3}e^{- \lambda \beta_3 f_M(t)  }}{ \alpha'! \alpha''!   \beta'! \beta''!  }  
    \\
    &  \qquad    
    \sum_{j,k\in \mathbb{Z}} \f{ 2^k 2^j}{ 2^{2k}+2^{2j} } 
    2^{  (\f{2}{p}-2+\theta) k}  2^{ (\f{1}{p}-\theta)j } 
    \n{
    \Deltahh_k\Deltav_j  \sp{
    {
    \p_{t,x}^{\alpha',\beta' }
    \divh  \uh
    } 
    {
    \p_{t,x}^{\alpha'',\beta''-e_3 } 
    \divh  \uh  
    } }   
    }_{L^p}   \\  
    & \leq C  
    \sum_{
    \substack{ \alpha=\alpha'+\alpha'' \\ \beta=\beta'+\beta''\\ \beta_3'' \geq 1}
    }   
    \f{r^{\alpha+|\betah|}t^{ \alpha+\f{|\betah|}{2}  }\rho_0^{\beta_3}e^{- \lambda \beta_3 f_M(t)  }}{ \alpha'! \alpha''!   \beta'! \beta''!   } 
    \n{
    {
    \p_{t,x}^{\alpha',\beta' }
    \divh  \uh
    } 
    {
    \p_{t,x}^{\alpha'',\beta''-e_3 }  
    \divh  \uh  
    } 
    }_{ \fB^{\f{2}{p}-2+\theta, \f{1}{p}-\theta}_p } \\
    & \leq 
    C  \sum_{
    \substack{ \alpha=\alpha'+\alpha'' \\ \beta=\beta'+\beta'' \\ \beta_3'' \geq 1}
    }   
    \f{r^{\alpha+|\betah|}t^{ \alpha+\f{|\betah|}{2}  }\rho_0^{\beta_3}e^{- \lambda \beta_3 f_M(t)  }}{ \alpha'! \alpha''!   \beta'! \beta''!   }
    \\
    & \qquad   \qquad    \qquad   
    \n{
    \p_{t,x}^{\alpha',\beta' }
    \divh  \uh
    }_{ \fB^{  \f{2}{p}-1+\theta, \f{1}{p}-\theta }_p }
    \n{
    \p_{t,x}^{\alpha'',\beta''-e_3 }
    \divh  \uh  
    }_{ \fB^{\f{2}{p}-1,\f{1}{p}}_p }  \\
    & \leq   
    C  \sum_{
    \substack{ \alpha=\alpha'+\alpha'' \\ \beta=\beta'+\beta''\beta_3'' \geq 1}
    }   
    \f{r^{\alpha+|\betah|}t^{ \alpha+\f{|\betah|}{2}  }\rho_0^{\beta_3}e^{- \lambda (\beta_3-1) f_M(t)  }}{ \alpha'! \alpha''!   \betah'! \betah''! \beta_3'! (\beta_3''-1)!  }
    \\
    & \qquad   \qquad    \qquad    \qquad  
    \n{
    \p_{t,x}^{\alpha',\beta' }
    \uh
    }_{ \fB^{\f{2}{p}+\theta, \f{1}{p}-\theta}_p }
    \n{
    \p_{t,x}^{\alpha'',\beta''-e_3 }  
    \uh  
    }_{ \fB^{\f{2}{p},\f{1}{p}}_p } . 
\end{align}
Thus, we have
\begin{align}
    & \sum_{
    \substack{0 \leq \alpha \leq M \\ \betah \in (\mathbb{N}\cup \{ 0 \})^2\\ \beta_3 \geq 1} 
    }  
    \n{
    \nablah
    P_{2,2;r}^{\alpha,\beta}
    }_{ L^1(0,T; \fB^{\f{2}{p}-2+\theta, \f{1}{p}-\theta}_p ) } 
    \\
    & 
    \quad 
    \leq  
    C
    \sum_{
    \substack{0 \leq \alpha \leq M \\ \betah \in (\mathbb{N}\cup \{ 0 \})^2\\ \beta_3 \geq 1}
    }
    \sum_{
    \substack{ \alpha=\alpha'+\alpha'' \\ \beta=\beta'+\beta''\\ \beta_3'' \geq 1}
    }   
    \n{    
    u^{ \alpha',\beta' }_{{\rm h};r;\lambda f_M}
    }_{L^2(0,T;\fB^{\f{2}{p}+\theta, \f{1}{p}-\theta}_p )}
    \n{
    u^{\alpha'',\beta''-e_3}_{{\rm h};r;\lambda f_M}
    }_{L^2(0,T; \fB^{\f{2}{p},\f{1}{p}}_p  )} \\
    & 
    \quad 
    \leq 
    C
    \n{u_{\rm h}}_{\mathcal{H}^{\f{2}{p}-1+\theta,\frac{1}{p}-\theta;M}_{p;r;\lambda f_M}(0,T)}
    \n{u_{\rm h}}_{\mathcal{H}^{\frac{2}{p}-1,\f{1}{p};M}_{p;r;\lambda f_M}(0,T)}. 
\end{align}
Making use of Lemma \ref{lemm:prod},  
we see that
\begin{align}
    &\n{\nablah P_{2,2;r}^{\alpha,\beta} }_{ \fB^{\f{2}{p}-1+\theta ,\f{1}{p}-\theta}_p }
    \\
    &\leq C    
    \sum_{
    \substack{ \alpha=\alpha'+\alpha'' \\ \beta=\beta'+\beta'' \beta_3'' \geq 1}
    }   
    \f{r^{\alpha+|\betah|}t^{ \alpha+\f{|\betah|}{2}  }\rho_0^{\beta_3}e^{- \lambda \beta_3 f_M(t)  }}{ \alpha'! \alpha''!   \beta'! \beta''!   }
    \sum_{j,k\in \mathbb{Z}} \f{ 2^k 2^j}{ 2^{2k}+2^{2j} } 
    2^{ (\f{2}{p}-1+\theta ) k  } 2^{ (\f{1}{p}-\theta)j }        \\
    & \qquad 
    \n{
    \Deltahh_k\Deltav_j  \sp{
    {
    \p_{t,x}^{\alpha',\beta' }
    \divh  \uh
    } 
    {
    \p_{t,x}^{\alpha'',\beta''-e_3 }
    \divh  \uh  
    } }   
    }_{L^p}   \\  
    &\leq C    
    \sum_{
    \substack{ \alpha=\alpha'+\alpha'' \\ \betah=\beta'+\beta''\beta_3'' \geq 1}
    }   
    \f{r^{\alpha+|\betah|}t^{ \alpha+\f{|\betah|}{2}  }\rho_0^{\beta_3}e^{- \lambda \beta_3 f_M(t)  }}{ \alpha'! \alpha''!   \beta'! \beta''!   }
    \sum_{j,k\in \mathbb{Z}} \f{ 2^{2k}}{ 2^{2k}+2^{2j} } 
    2^{(\f{2}{p}-2+\theta )k}   2^{ (1+\f{1}{p}-\theta) j}        \\
    & \qquad 
    \n{
    \Deltahh_k\Deltav_j  \sp{
    {
    \p_{t,x}^{\alpha',\beta' }
    \divh  \uh
    } 
    {
    \p_{t,x}^{\alpha'',\beta''-e_3 } 
    \divh  \uh  
    } }   
    }_{L^p}   \\  
    & \leq C  
    \sum_{
    \substack{ \alpha=\alpha'+\alpha'' \\ \beta=\beta'+\beta'' \beta_3'' \geq 1}
    }   
    \f{r^{\alpha+|\betah|}t^{ \alpha+\f{|\betah|}{2}  }\rho_0^{\beta_3}e^{- \lambda \beta_3 f_M(t)  }}{ \alpha'! \alpha''!   \beta'! \beta''!   }\\
    & \qquad  
    \n{
    {
    \p_{t,x}^{\alpha',\beta' }
    \divh  \uh
    } 
    {
    \p_{t,x}^{\alpha'',\beta''-e_3 }  
    \divh  \uh  
    } 
    }_{ \fB^{\f{2}{p}-2+\theta,1+\f{1}{p}-\theta}_p }. \label{22r:2}
\end{align}
Since the right-hand side in \eqref{22r:2} is completely same as that in \eqref{22r:1},
we may follow the same argument as above to obtain
\begin{align}
    &\sum_{
    \substack{0 \leq \alpha \leq M \\ \betah \in (\mathbb{N}\cup \{ 0 \})^2\\ \beta_3 \geq 1}
    }
    \n{
    \nablah
    P_{2,2;r}^{\alpha,\beta}
    }_{ L^1(0,T; \fB^{\f{2}{p}-1+\theta ,\f{1}{p}-\theta}_p ) } 
    \\
    & 
    \qquad 
    \leq 
    C
    \n{u_{\rm h}}_{\mathcal{H}^{\f{2}{p}-1+\theta,  \f{1}{p}-\theta}_{p;r;\lambda f_M}(0,T)}
    \n{u_{\rm h}}_{\mathcal{H}^{\frac{2}{p}-1,\f{1}{p}}_{p;r;\lambda f_M}(0,T)}
    +
    \f{C}{\lambda}
    \n{u_{\rm h}}_{\mathcal{H}^{ \f{2}{p}-1+\theta , \f{1}{p}-\theta }_{p;r;\lambda f_M} (0,T)}, 
\end{align}


\noindent 
{\it Step 3. Analytic estimates for $u_3$.}
Here, we aim to obtain the analytic estimates for $u_3$ with respect to $t>0$ and $\xh \in \mathbb{R}^2$. Compared with the previous step, the calculation here simplifies a lot due to the absence of vertical derivative. 

\begin{align}
    \partial_{t,\xh}^{\alpha,\betah}[(u\cdot \nabla)u_3 + \partial_{x_3} P]
    ={}
    &
    \sum_{
    \substack{ \alpha=\alpha'+\alpha'' \\ \betah=\betah'+\betah''}
    }
    \f{\alpha!}{\alpha'! \alpha''!} 
    \f{\betah!}{\betah'! \betah''!} 
    { \p_{t,\xh}^{\alpha',\betah'} \uh }
    \cdot \nablah
    { \p_{t,\xh}^{\alpha',\betah'} u_3 }
    \\
    & 
    -
    \sum_{
    \substack{ \alpha=\alpha'+\alpha'' \\ \betah=\betah'+\betah''}
    }
    \f{\alpha!}{\alpha'! \alpha''!} 
    \f{\betah!}{\betah'! \betah''!} 
    { \p_{t,\xh}^{\alpha',\betah'} u_3 }
    { \p_{t,\xh}^{\alpha'',\betah''} \divh \uh }
    \\
    &
    + 
    \p_{x_3} \p_{t,\xh}^{\alpha,\betah} P, 
\end{align}
which implies
\begin{align}
    [(u\cdot \nabla)u + \nabla P]_{3;r}^{\alpha,\betah}
    ={}&
    \sum_{
    \substack{ \alpha=\alpha'+\alpha'' \\ \betah=\betah'+\betah''}
    } 
    u_{{\rm h};r}^{ \alpha',\betah'}
    \cdot \nablah  u_{3;r}^{ \alpha'',\betah''}
    \\
    &
    - 
    \sum_{
    \substack{ \alpha=\alpha'+\alpha'' \\ \betah=\betah'+\betah''}
    } 
    u_{3;r}^{ \alpha',\betah'} 
    \divh u_{{\rm h};r}^{ \alpha'',\betah''}
    + 
    \p_{x_3} P_r^{\alpha,\betah}. 
\end{align}
From Proposition \ref{prop:anal-lin}, it follows that
for any $(s,\sigma)\in \Lambda_{p,\theta}^{\rm v}$,
\begin{align}
    \n{
    u_{3}
    }_{\mathcal{V}_{p,\theta;r}^{s,\sigma;M}(0,T)}
    &\leq  
    C \n{ 
    u_{0,3}
    }_{\fB^{s,\sigma}_p}  
    + 
    Cr
    \n{
    u_{3}  
    }_{\mathcal{V}_{p,\theta;r}^{s,\sigma;M}(0,T)}
    \\
    &\quad 
    +
    C
    \sum_{\substack{0 \leq \alpha \leq M \\ \betah \in (\mathbb{N} \cup \{0\})^2}}
    \n{[(u\cdot \nabla)u + \nabla P]_{3;r}^{\alpha,\betah}}_{L^1(0,T;\fB_p^{s,\sigma})}
    \\
    &
    \leq  
    C \n{ 
    u_{0,3}
    }_{\fB^{s,\sigma}_p}  
    + 
    Cr
    \n{
    u_{3}  
    }_{\mathcal{V}_{p,\theta;r}^{s,\sigma;M}(0,T)}
    +
    C
    \sum_{m=1}^3
    \mathcal{J}^{s,\sigma;M}_{m},
\end{align}
where we set 
\begin{align}
    \mathcal{J}^{s,\sigma;M}_1
    & 
    :=
    \sum_{
    \substack{0 \leq \alpha \leq M \\ \betah \in (\mathbb{N}\cup \{ 0 \})^2
    }
    } 
    \sum_{
    \substack{ \alpha=\alpha'+\alpha'' \\ \betah=\betah'+\betah''}
    } 
    \n{
    u_{{\rm h};r}^{ \alpha',\betah'}
    \cdot \nablah  u_{3;r}^{ \alpha'',\betah''}
    }_{L^1(0,T;\fB^{s,\sigma}_p)} , \\
    \mathcal{J}^{s,\sigma;M}_{2}
    & 
    :=
    \sum_{
    \substack{0 \leq \alpha \leq M \\ \betah \in (\mathbb{N}\cup \{ 0 \})^2
    }
    } 
    \sum_{
    \substack{ \alpha=\alpha'+\alpha'' \\ \betah=\betah'+\betah''}
    } 
    \n{
    u_{3;r}^{ \alpha',\betah'} 
    \divh u_{{\rm h};r}^{ \alpha'',\betah''}
    }_{L^1(0,T;\fB^{s,\sigma}_p)}  , \\
    \mathcal{J}^{s,\sigma;M}_{3}
    & 
    :=
    \sum_{
    \substack{0 \leq \alpha \leq M \\ \betah \in (\mathbb{N}\cup \{ 0 \})^2
    }
    } 
    \n{\p_{x_3} p_r^{\alpha,\betah}}_{L^1(0,T;\fB^{s,\sigma}_p)} .
\end{align}
It follows from Lemma \ref{lemm:prod} that 
\begin{align}
  \mathcal{J}^{s,\sigma;M}_{1}   
  & = \sum_{
    \substack{0 \leq \alpha \leq M \\ \betah \in (\mathbb{N}\cup \{ 0 \})^2}
    }  
    \sum_{
    \substack{ \alpha=\alpha'+\alpha'' \\ \betah=\betah'+\betah''}
    } 
    \n{
    u_{{\rm h};r}^{ \alpha',\betah'}
    \cdot \nablah  u_{3;r}^{ \alpha'',\betah''}
    }_{L^1(0,T;\fB^{s,\sigma}_p)} \\
    &  
    \leq 
    \sum_{
    \substack{0 \leq \alpha' \leq M \\ \betah' \in (\mathbb{N}\cup \{ 0 \})^2}
    }  
    \sum_{
    \substack{0 \leq \alpha''\leq M \\ \betah'' \in (\mathbb{N}\cup \{ 0 \})^2}
    }   
    \n{
    u_{{\rm h};r}^{ \alpha',\betah'}
    \cdot \nablah  u_{3;r}^{ \alpha'',\betah''}
    }_{L^1(0,T;\fB^{s,\sigma}_p)} \label{eq:uh-u3} \\
    & 
    \leq 
    C  
    \sum_{
    \substack{0 \leq \alpha' \leq M \\ \betah' \in (\mathbb{N}\cup \{ 0 \})^2}
    }  
    \sum_{
    \substack{0 \leq \alpha''\leq M \\ \betah'' \in (\mathbb{N}\cup \{ 0 \})^2}
    }   
    \n{ 
    u_{{\rm h};r}^{ \alpha',\betah'}
    }_{L^2(0,T;\fB^{\f{2}{p},\f{1}{p}}_p)} 
    \n{
    u_{3;r}^{ \alpha'',\betah''}
    }_{L^2(0,T;\fB^{s+1,\sigma}_p)} \\
    &  
    \leq 
    C  \n{u_{{\rm h}}}_{\mathcal{H}^{\f{2}{p}-1,\f{1}{p};M}_{p;r;\lambda f_M}(0,T)} 
    \n{u_{3}}_{\mathcal{V}^{s,\sigma;M}_{p,r}(0,T)}.
\end{align}
Analogously, it holds that 
\begin{align}
 \mathcal{J}^{s,\sigma;M}_{2}    &  =
    \sum_{
    \substack{0 \leq \alpha \leq M \\ \betah \in (\mathbb{N}\cup \{ 0 \})^2}
    }  
    \sum_{
    \substack{ \alpha=\alpha'+\alpha'' \\ \betah=\betah'+\betah''}
    } 
    \n{
    u_{3;r}^{ \alpha',\betah'} 
    \divh u_{{\rm h};r}^{ \alpha'',\betah''}
    }_{L^1(0,T;\fB^{s,\sigma}_p)} \\
    & 
    \leq 
    \sum_{
    \substack{0 \leq \alpha' \leq M \\ \betah' \in (\mathbb{N}\cup \{ 0 \})^2}
    }  
    \sum_{
    \substack{0 \leq \alpha''\leq M \\ \betah'' \in (\mathbb{N}\cup \{ 0 \})^2}
    }   
    \n{
    u_{3;r}^{ \alpha',\betah'} 
    \divh u_{{\rm h};r}^{ \alpha'',\betah''}
    }_{L^1(0,T;\fB^{s,\sigma}_p)} \label{eq:uh-uh} \\
    & 
    \leq 
    C 
    \sum_{
    \substack{0 \leq \alpha' \leq M \\ \betah' \in (\mathbb{N}\cup \{ 0 \})^2}
    }  
    \sum_{
    \substack{0 \leq \alpha''\leq M \\ \betah'' \in (\mathbb{N}\cup \{ 0 \})^2}
    }   
    \n{
    u_{3;r}^{ \alpha',\betah'} 
    }_{L^2(0,T;\fB^{\f{2}{p},\f{1}{p}}_p)} 
    \n{
    u_{{\rm h};r}^{ \alpha'',\betah''}
    }_{L^2(0,T;\fB^{s+1,\sigma}_p)} \\
    & 
    \leq 
    C 
    \n{ u_{3} }_{\mathcal{V}^{\f{2}{p}-1,\f{1}{p};M}_p(0,T)} 
    \n{  u_{{\rm h}}  }_{\mathcal{H}^{s,\sigma;M}_{p;r;\lambda f_M}(0,T)} . 
\end{align}
By the decomposition of pressure \eqref{eq:decom-p}, we see
\begin{align}
    \p_{x_3} P_r^{\alpha,\betah}
    & =
    \sum_{
    \substack{ \alpha=\alpha'+\alpha'' \\ \betah=\betah'+\betah''}
    } 
    \sum_{k,\ell=1}^2\partial_{x_k}\partial_{x_3}(-\Delta)^{-1}
    \sp{ \p_{x_{\ell}} u_{k;r}^{ \alpha',\betah'}   }
    u_{\ell;r}^{ \alpha'',\betah''}  \\
    & \quad 
    +
    \sum_{
    \substack{ \alpha=\alpha'+\alpha'' \\ \betah=\betah'+\betah''}
    } 
    \sum_{k,\ell=1}^2\partial_{x_k}\partial_{x_3}(-\Delta)^{-1}
    u_{k;r}^{ \alpha',\betah'}  
    \sp{ \p_{x_{\ell}} u_{\ell;r}^{ \alpha'',\betah''}   } \\
    & \quad 
    + 2 
    \sum_{
    \substack{ \alpha=\alpha'+\alpha'' \\ \betah=\betah'+\betah''}
    } 
    \partial_{x_3}^2(-\Delta)^{-1} 
    \sp{
    u_{{\rm h};r}^{ \alpha',\betah'} 
    \cdot  \nablah u_{3;r}^{ \alpha'',\betah''} 
    } .
\end{align}
From the same arguments as \eqref{eq:uh-u3} and \eqref{eq:uh-uh}, it follows that 
\begin{align}
  \mathcal{J}^{s,\sigma;M}_{3}   & = \sum_{
    \substack{0 \leq \alpha \leq M \\ \betah \in (\mathbb{N}\cup \{ 0 \})^2}
    }    
    \n{\p_{x_3} P_r^{\alpha,\betah}}_{L^1(0,T;\fB^{s,\sigma}_p)}
    \\
    & 
    \leq 
    C 
    \n{u_{{\rm h}}}_{\mathcal{H}^{\f{2}{p}-1,\f{1}{p};M}_{p;r;\lambda f_M}(0,T)} 
    \n{ u_{{\rm h}}  }_{  \mathcal{H}^{s,\sigma;M}_{p;r;\lambda f_M}(0,T)} 
    +
    C
    \n{u_{{\rm h}}}_{\mathcal{H}^{\f{2}{p}-1,\f{1}{p};M}_{p;r;\lambda f_M}(0,T)} 
    \n{u_{3}}_{\mathcal{V}^{s,\sigma;M}_{p,r}(0,T)} . 
\end{align}
Combining the above all nonlinear estimates, we obtian the desired estimate for $u_3$ and complete the proof of Proposition \ref{prop}. 
\end{proof}

\section{Proof of the main result}\label{sec:pf}
Now, we are in a position to present the proof of our main result.
\begin{proof}
We divide the proof into two steps. 
In the first step, we introduce the approximation system and close the global analytic a priori estimates for the approximate solutions by making use of the calculations performed in the previous section.
In the second step, we pass to the limit for the approximate solutions and construct the analytic solution.

\noindent
{\it Step 1. Uniform boundedness of the approximated solutions.} 
Let $\chi \in C_c^{\infty}(\mathbb{R}^n)$ satisfy $0 \leq \chi \leq 1$ and
\begin{align}
    \chi(\xi) = 
    \begin{cases}
        1 & (|\xi| \leq 1), \\
        0 & (|\xi| \geq 2).
    \end{cases}
\end{align}
For $N \in \mathbb{N}$, we define 
\begin{align}
    \mathcal{P}_N f:= \mathscr{F}^{-1}\lp{\chi(N^{-1}\xi)\widehat{f}(\xi)}, \qquad f \in \mathscr{S}'(\mathbb{R}^3).
\end{align}
Note that there holds
\begin{align}
    \n{\mathcal{P}_Nf}_{\fB_p^{s,\sigma}} \leq C^*\n{f}_{\fB_p^{s,\sigma}}
\end{align}
for all $f \in \fB_p^{s,\sigma}(\mathbb{R}^3)$ with the constant $C^*=\n{\mathscr{F}^{-1}[\chi]}_{L^1}$.
In what follows, we define 
\begin{align}
    \lambda:=10C_0^*, \qquad r:= \frac{1}{10C_0^*},
\end{align}
where $C_0^*:=C^*C_0$ and $C_0$ denote the positive constant appearing in Proposition \ref{prop}.
Let us choose initial data $u_0=(u_{0,{\rm h}},u_{0,3})$ so that 
\begin{align}
    &
    3C_0^*\n{u_{0,{\rm h}}}_{\mathcal{D}_{p,\theta}^{\rm h}}
    \exp\lp{\frac{2C_0^*+4 \theta (C_0^*)^2}{\theta}\n{u_{0,3}}_{D^{\rm v}_{p,\theta}}}
    \leq 
    \sp{\frac{\log 2}{2\lambda C_0^*}}^{\frac{1}{\theta}},
    \\
    &
    3(C_0^*)^2\n{u_{0,{\rm h}}}_{\mathcal{D}_{p,\theta}^{\rm h}}
    \exp\lp{4(C_0^*)^2\n{u_{0,3}}_{D^{\rm v}_{p,\theta}}}
    \leq{}
    \frac{\log 2}{2\lambda}.
\end{align}

Let us consider the approximation system 
\begin{align}\label{eq:app}
    \begin{cases}
        \partial_t u^N - \Deltah u^N + \mathcal{P}_N \lp{(u^N \cdot \nabla)u^N} + \nabla P^N = 0, \quad & t>0, x \in \mathbb{R}^3, \\
        \div u^N = 0, & t\geq 0, x \in \mathbb{R}^3, \\
        u^N(0,x) = \mathcal{P}_Nu_0(x), & x \in \mathbb{R}^3,
    \end{cases}
\end{align}
where the pressure $P^N$ is given by 
\begin{align}
    P^N=\mathcal{P}_N\sum_{k,\ell=1}^3 \partial_{x_k}\partial_{x_\ell}(-\Delta)^{-1}\sp{u_k^Nu_{\ell}^N}.
\end{align}
We then, see that there exists a solution $u^N$ in the class
\begin{align}
    \partial_t^{\alpha}
    u^N \in BC([0,T_N');\fB_p^{\frac{2}{p}-1,\frac{1}{p}}(\mathbb{R}^3)) \cap L^1(0,T_N';\fB_p^{\frac{2}{p}+1,\frac{1}{p}}(\mathbb{R}^3)),
    \quad
    \alpha \in \mathbb{N} \cup \{ 0 \},
\end{align}
where $T_N'=T_N'(p,\theta,u_0)$ denotes the maximal existence time.
Since $\supp\widehat{u^N}(t) \subset \Mp{\xi \in \R^3\ ;\ |\xi| \leq 2N}$, the approximated solution $u^N$ is real analytic for $x \in \mathbb{R}^3$ in the sense that 
\begin{align}
    &
    \sum_{\beta \in (\mathbb{N} \cup \{ 0\})^3}
    \frac{R^{|\beta|}}{\beta!}
    \n{\partial_x^{\beta}u^N}_{{L^{\infty}}(0,T_N;\fB_p^{\frac{2}{p}-1,\frac{1}{p}}) \cap L^1(0,T_N';\fB_p^{\frac{2}{p}+1,\frac{1}{p}})}
    \\
    &\quad
    \leq 
    \sum_{\beta \in (\mathbb{N} \cup \{ 0\})^3}
    \frac{(CNR)^{|\beta|}}{\beta!}
    \n{u^N}_{{L^{\infty}}(0,T_N;\fB_p^{\frac{2}{p}-1,\frac{1}{p}}) \cap L^1(0,T_N';\fB_p^{\frac{2}{p}+1,\frac{1}{p}})}
    \\
    &\quad 
    \leq 
    e^{CNR}
    \n{u^N}_{{L^{\infty}}(0,T_N;\fB_p^{\frac{2}{p}-1,\frac{1}{p}}) \cap L^1(0,T_N';\fB_p^{\frac{2}{p}+1,\frac{1}{p}})}
    <
    \infty
\end{align}
for all $R>0$.

By the Peano theorem, there exists a solution $f^N_M$ to the ordinary differential equation
\begin{align}\label{ODEN}
    \begin{dcases}
    \begin{aligned}
    \frac{d}{dt}f^N_M(t) 
    = {}&
    \sum_{
    \substack{0 \leq \alpha \leq M \\ \beta \in (\mathbb{N}\cup \{ 0 \})^3}
    }
    \frac{r^{\alpha + |\beta_{\rm h}|}t^{\alpha + \frac{|\beta_{\rm h}|}{2}}}{\alpha!\beta!}
    \n{
    \partial_t^{\alpha}
    \partial_{x}^{\beta}
    u^N_{\rm h} (t)
    }_{\fB^{\frac{2}{p}+1,\frac{1}{p}}_p}
    e^{-\lambda \beta_3 f^N_M(t)} 
    \\
    &+
    \sum_{
    \substack{0 \leq \alpha \leq M \\ \beta_{\rm h} \in (\mathbb{N}\cup \{ 0 \})^2}
    }
    \frac{
    r^{\alpha + |\beta_{\rm h}|}t^{\alpha + \frac{|\beta_{\rm h}|}{2}}
    \rho_0^{\beta_3} 
    }
    {
    \alpha!\beta_{\rm h}!
    }
    \n{
    \partial_t^{\alpha}
    \partial_{x_{\rm h}}^{\beta_{\rm h}}
    u^N_3(t)}_{\fB^{\frac{2}{p},\frac{1}{p}}_p},
    \end{aligned}
    \\
    f^N_M(0) = 0
    \end{dcases}
\end{align}
on a time interval $[0,T_N)$ with some $0<T_N\leq T_N'$.
Moreover, we see that 
\begin{align}
    \n{\uh^N}_{\mathcal{H}_{p,\theta;r;\lambda f^N_M}^M(0,T_N)}< \infty,
    \qquad
    \n{u^N_3}_{V_{p,\theta;r}^M(0,T_N)}< \infty.
\end{align}
In order to show the global existence and a priori bound for the approximated solutions, we define a time
\begin{align}
    T_N^*
    :=
    \sup
    \Mp{
    T \in (0,T_N)
    \ ;\ 
    \begin{aligned}
    &
    \n{\uh^N}_{\mathcal{H}_{p,\theta;r;\lambda f^N_M}^M(0,T)} 
    \\
    &\quad
    \leq 
    3C_0^* 
    \n{u_{0,{\rm h}}}_{\mathcal{D}_{p,\theta}^{\rm h}}
    \exp\lp{(2C_0^*)^2\n{u_{0,3}}_{D^{\rm v}_{p,\theta}}}
    \end{aligned}
    }
\end{align}
and suppose to contrary that 
\begin{align}
    T_N^* < \infty.
\end{align}
Let $0<T<T_N^*$.
For the estimate of $u^N_3$, we see by Proposition \ref{prop} and the definition of $T_N^*$ that 
\begin{align}
    \n{u^N_3}_{\mathcal{V}^M_{p,\theta;r}(0,T)}
    \leq{}& 
    C_0^*   
    \n{u_{0,3}}_{D^{\rm v}_{p,\theta}} 
    +
    C_0^*r
    \n{u^N_3}_{\mathcal{V}^M_{p,\theta;r}(0,T)}
    \\
    &
    +
    \frac{1}{10} 
    \n{u^N_3}_{\mathcal{V}^M_{p,\theta;r}(0,T)}
    +
    C_0^* 
    \sp{ \frac{1}{10C_0^*}}^2,
\end{align}
which yields
\begin{align}\label{est:u3N}
    \n{u^N_3}_{\mathcal{V}^M_{p,\theta;r}(0,T)}
    \leq{}&
    2
    C_0^*   
    \n{u_{0,3}}_{D^{\rm v}_{p,\theta}} 
    +
    \frac{1}{50C_0^*}.
\end{align}
We remark that the apriori estimate in Proposition \ref{prop} hold for $u^N$ with $C_0$ replaced by $C_0^*$.
Then, we have the uniform estimate of $f^N_M$:
\begin{align}
    \sup_{t \in [0,T]} &f^N_M(t)
    \leq{}
    C_0^*
    \n{u^N_3}_{\mathcal{V}^M_{p,\theta}(0,T)}^{1-\theta}
    \n{\uh^N}_{\mathcal{H}^M_{p,\theta;r;\lambda f^N_M}(0,T)}^{\theta}
    +
    \n{\uh^N}_{\mathcal{H}^M_{p,\theta;r;\lambda f^N_M}(0,T)}
    \\
    \leq{}&
    C_0^*\sp{2C_0^*\n{u_{0,3}}_{D^{\rm v}_{p,\theta}}+\frac{1}{50C_0^*}}^{1-\theta}
    \sp{3C_0^*\n{u_{0,{\rm h}}}_{\mathcal{D}_{p,\theta}^{\rm h}}}^{\theta}
    \exp\lp{4\theta (C_0^*)^2\n{u_{0,3}}_{D^{\rm v}_{p,\theta}}}
    \\
    &
    +
    3C_0^*\n{u_{0,{\rm h}}}_{\mathcal{D}_{p,\theta}^{\rm h}}\exp\lp{4(C_0^*)^2\n{u_{0,3}}_{D^{\rm v}_{p,\theta}}}
    \\
    \leq{}&
    C_0^*
    \sp{3C_0^*\n{u_{0,{\rm h}}}_{\mathcal{D}_{p,\theta}^{\rm h}}}^{\theta}
    \exp\lp{\sp{2C_0^*+4 \theta (C_0^*)^2}\n{u_{0,3}}_{D^{\rm v}_{p,\theta}}}\label{est:f}
    \\
    &
    +
    3(C_0^*)^2\n{u_{0,{\rm h}}}_{\mathcal{D}_{p,\theta}^{\rm h}}\exp\lp{4(C_0^*)^2\n{u_{0,3}}_{D^{\rm v}_{p,\theta}}}
    \\
    \leq{}&
    \frac{\log 2}{\lambda} .
\end{align}
For the estimate of the horizontal velocity field, there holds by Proposition \ref{prop} that
\begin{align}
    &
    \n{\uh^N}_{\mathcal{H}_{p,\theta;r;\lambda f^N_M}^M(0,T)}
    \leq{}
    C_0^*\n{u_{0,{\rm h}}}_{\mathcal{D}_{p,\theta}^{\rm h}}
    \\
    &\quad
    +
    \sp{\frac{1}{10}+\frac{1}{10}}
    \n{\uh^N}_{\mathcal{H}_{p,\theta;r;\lambda f^N_M}^M(0,T)}
    +
    \frac{1}{10}
    \n{\uh^N}_{\mathcal{H}_{p,\theta;r;\lambda f^N_M}^M(0,T)}
    \\
    &\quad
    +
    \frac{1}{10}
    \exp
    \lp{\lambda \frac{\log 2}{\lambda}}
    \n{\uh^N}_{\mathcal{H}_{p,\theta;r;\lambda f^N_M}^M(0,T)}
    \\
    &\quad
    +  
    C_0^*  
    \int_0^T 
    \sum_{
    \substack{0 \leq \alpha \leq M \\ \betah \in (\mathbb{N}\cup \{ 0 \})^2}
    }
    \frac{r^{\alpha+|\betah|} t^{\alpha+\f{|\betah|}{2}} \rho_0^{\beta_3}}{\alpha!\betah!}
    \n{
    \partial_{t}^{\alpha}  
    \partial_{\xh}^{\betah}  
    u^N_3(t)
    }_{\fB^{\f{2}{p}+1,\f{1}{p}}_p}
    \n{\uh^N}_{\mathcal{H}^M_{p,\theta;r;\lambda f^N_M}(0,t)}  dt,
\end{align}
which implies
\begin{align}
    &
    \n{\uh^N}_{\mathcal{H}_{p,\theta;r;\lambda f^N_M}^M(0,T)}
    \leq{}
    2C_0^*\n{u_{0,{\rm h}}}_{\mathcal{D}_{p,\theta}^{\rm h}}
    \\
    &\quad
    +  
    2C_0^*  
    \int_0^T 
    \sum_{
    \substack{0 \leq \alpha \leq M \\ \betah \in (\mathbb{N}\cup \{ 0 \})^2}
    }
    \frac{r^{\alpha+|\betah|} t^{\alpha+\f{|\betah|}{2}} \rho_0^{\beta_3}}{\alpha!\betah!}
    \n{
    \partial_{t}^{\alpha}  
    \partial_{\xh}^{\betah}  
    u^N_3(t)
    }_{\fB^{\f{2}{p}+1,\f{1}{p}}_p}
    \n{\uh^N}_{\mathcal{H}^M_{p,\theta;r;\lambda f^N_M}(0,t)}  dt.
\end{align}
By virtue of the Gronwall lemma, we have
\begin{align}\label{est:uhN}
    \begin{split}
    \n{\uh^N}_{\mathcal{H}_{p,\theta;r;\lambda f^N_M}^M(0,T^*_N)}
    \leq{}&
    2C_0^*\n{u_{0,{\rm h}}}_{\mathcal{D}_{p,\theta}^{\rm h}}
    \exp\lp{2C_0^*\n{u^N_3}_{\mathcal{V}_{p,\theta;r}^M(0,T^*_N)}}
    \\
    \leq{}&
    2e^{\frac{1}{25}}C_0^*\n{u_{0,{\rm h}}}_{\mathcal{D}_{p,\theta}^{\rm h}}
    \exp\lp{(2C_0^*)^2\n{u_{0,3}}_{D^{\rm v}_{p,\theta}}}.
    \end{split}
\end{align}
Since $2e^{\frac{1}{25}} < 3$,
the above estimate contradicts the definition of $T_N^*$ and thus we have $T_N^*=T_N=\infty$.

Hence, combining \eqref{est:u3N}, \eqref{est:f}, and \eqref{est:uhN}, we have 
\begin{align}\label{est:unif}
    \begin{split}
    \n{\uh^N}_{\mathcal{H}_{p,\theta;r;\log 2}^M(0,\infty)}
    \leq{}&
    C\n{u_{0,{\rm h}}}_{\mathcal{D}_{p,\theta}^{\rm h}}
    \exp\lp{C\n{u_{0,3}}_{D^{\rm v}_{p,\theta}}},
    \\
    \n{u^N_3}_{\mathcal{V}^M_{p,\theta;r}(0,\infty)}
    \leq{}&
    C  
    \n{u_{0,3}}_{D^{\rm v}_{p,\theta}} 
    +
    C.
    \end{split}
\end{align}

\noindent
{\it Step 2. Existence of an analytic solution.}
For $m \in \mathbb{N}$, let $\psi_m \in C_c(\mathbb{R}^3)$ satisfy $0 \leq \psi_m \leq 1$, $\supp \psi_m \subset \{ x \in \mathbb{R}^3\ ;\ |x| \leq m+1\}$, and $\psi_m(x) = 1$ for all $x \in \mathbb{R}^3$ with $|x| \leq m$.
From the previous step, it follows that 
\begin{align}
    &
    \n{t^{\frac{1}{2}}u^N}_{{L^{\infty}}(0,\infty;\fB_p^{\frac{2}{p},\frac{1}{p}})}
    +
    \n{t^{\frac{3}{2}}\partial_t u^N}_{L^{\infty}(0,\infty;\fB_p^{\frac{2}{p},\frac{1}{p}})}
    \\
    &\quad
    \leq
    C
    \sum_{|\betah|=1}
    \sp{
    \n{t^{\frac{1}{2}}\partial_{\xh}^{\betah}u^N}_{{L^{\infty}}(0,\infty;\fB_p^{\frac{2}{p}-1,\frac{1}{p}})}
    +
    \n{t^{\frac{3}{2}}\partial_t \partial_{\xh}^{\betah}u^N}_{L^{\infty}(0,\infty;\fB_p^{\frac{2}{p}-1,\frac{1}{p}})}
    }\\
    &\quad
    \leq 
    C
    \sp{
    \n{\uh^N}_{\mathcal{H}_{p,\theta;r;\lambda f^N_M}^M(0,\infty)}
    +
    \n{u_3^N}_{\mathcal{V}_{p,\theta;r}^M(0,\infty)}
    }\\
    &\quad
    \leq 
    C\n{u_{0,{\rm h}}}_{\mathcal{D}_{p,\theta}^{\rm h}}
    \exp\lp{C\n{u_{0,3}}_{D^{\rm v}_{p,\theta}}}
    +
    C\n{u_{0,3}}_{D^{\rm v}_{p,\theta}} 
    + 
    C,
\end{align}
which and the second assertion of Lemma \ref{lemm:embedding} imply 
\begin{align}\label{unif-bdd}
    \sup_{N \in \mathbb{N}}
    \sp{
    \n{t^{\frac{1}{2}}\psi_mu^N}_{{L^{\infty}}(0,\infty;\ifB_p^{\frac{2}{p},\frac{1}{p}})}
    +
    \n{t^{\frac{3}{2}}\partial_t\sp{\psi_m u^N}}_{L^{\infty}(0,\infty;\ifB_p^{\frac{2}{p},\frac{1}{p}})}
    }
    <
    \infty.
\end{align}
Let $0<\varepsilon<1/p$.
Seeing by \eqref{unif-bdd} that $\{\psi_mu^N\}_{N=1}^{\infty}$ is uniform bounded and equicontinuous in $\ifB_p^{\frac{2}{p},\frac{1}{p}}(\mathbb{R}^3)$ and using of Lemma \ref{lemm:comp} via the relation $\psi_m\psi_{m+1}=\psi_m$, we make use of the mixture of the Ascoli--Arzelà theorem and the Cantor diagonal process to extract a subsequence $\{u^{N_k}\}_{k=1}^{\infty}$ such that for any $m \in \mathbb{N}$, we find a $\bar{u}^m \in C((0,\infty);\ifB_p^{\frac{2}{p}-\varepsilon,\frac{1}{p}-\varepsilon}(\mathbb{R}^3))$ satisfying that for every $0<t<t'<\infty$,
\begin{align}
    \lim_{k \to \infty}
    \sup_{t \leq \tau \leq t'}\n{\psi_mu^{N_k}(\tau) - \bar{u}^m(\tau)}_{\ifB_p^{\frac{2}{p}-\varepsilon,\frac{1}{p}-\varepsilon}}=0.
\end{align}
Here, since $\psi_m=\psi_m\psi_{m'}$ yields $\bar{u}^m=\psi_{m}\bar{u}^{m'}$ for all positive integers $m' > m$, the function $\bar{u}^{m'}$ is independent of $m'>m$ on the region $\{ x \in \mathbb{R}^3 \ ;\ |x| \leq m \}$ up to null sets.
Hence, we see that there exists an almost everywhere pointwise limit $u^m(t,x) \to u(t,x)$ $(m \to \infty)$ and also see that for every $m \in \mathbb{N}$ and $0<t<t'<\infty$, 
\begin{align}
    \lim_{k \to \infty}
    \sup_{t \leq \tau \leq t'}\n{\psi_m u^{N_k}(\tau) - \psi_m u(\tau)}_{\ifB_p^{\frac{2}{p}-\varepsilon,\frac{1}{p}-\varepsilon}}=0.
\end{align}
Thus, for every $\mathbb{R}^3$-valued test function $\psi \in C_c^{\infty}(\mathbb{R}^3)$ with $\div \psi = 0$, it holds for each $0<t<t'<\infty$  that 
\begin{align}
    &
    \lr{u^{N_k}(t_*),\psi} \to \lr{u(t_*),\psi}, \quad
    t_*=t,t',
    \\
    &
    \int_t^{t'}\lr{\Deltah u^{N_k}(\tau),\psi}d\tau 
    \to 
    \int_t^{t'}\lr{\Deltah u(\tau),\psi} dt,
    \\
    &
    \int_t^{t'}\lr{(u^{N_k}(\tau)\cdot \nabla)u^{N_k}(\tau),\psi}d\tau 
    \to 
    \int_t^{t'}\lr{(u(\tau)\cdot \nabla)u(\tau),\psi} dt
\end{align}
as $k \to \infty$.
From this, we easily see that $u$ solve \eqref{eq:ANS} in the distributional sense for $t>0$ by passing $N=N_k \to \infty$ in the weak form of the approximation system \eqref{eq:app}.
On the other hand, since 
\begin{align}
    \sup_{k \in \mathbb{N}}
    \sp{
    \n{u^{N_k}}_{{L^{\infty}}(0,\infty;\fB_p^{\frac{2}{p}-1,\frac{1}{p}})}
    +
    \n{u^{N_k}}_{L^1(0,\infty;\fB_p^{\frac{2}{p}+1,\frac{1}{p}})}
    }
    <\infty,
\end{align}
Lemma \ref{lemm:Fatou-time} and the similar argument as in \cite{Bah-Che-Dan-11}*{pp.443--444} imply 
\begin{align}
    u \in {L^{\infty}}(0,\infty;\fB_p^{\frac{2}{p}-1,\frac{1}{p}}(\mathbb{R}^3)) \cap L^1(0,\infty;\fB_p^{\frac{2}{p}+1,\frac{1}{p}}(\mathbb{R}^3)), 
\end{align}
and 
\begin{align}
    \n{u}_{L^{\infty}(0,\infty;\fB_p^{\frac{2}{p}-1,\frac{1}{p}})\cap L^1(0,\infty;\fB_p^{\frac{2}{p}+1,\frac{1}{p}})}
    \leq 
    C
    \liminf_{\ell \to \infty}
    \n{u^{N_{k_\ell}}}_{{L^{\infty}}(0,\infty;\fB_p^{\frac{2}{p}-1,\frac{1}{p}})\cap L^1(0,\infty;\fB_p^{\frac{2}{p}+1,\frac{1}{p}})}
\end{align}
for some subsequence $\{u^{N_{k_\ell}}\}_{\ell=1}^{\infty}$.
We notice that the above fact implies $u(0)=u_0$.
Repeating the same procedure for $\partial_{t,x}^{\alpha,\beta}\uh^N$ and $\partial_{t,\xh}^{\alpha,\betah}u_N$ with $0\leq \alpha \leq M$ and $\beta \in (\mathbb{N} \cup \{0 \})^3$, we see that the solution $u$ to \eqref{eq:ANS} constructed above satisfies
\begin{align}
    &
    \n{u_{\rm h}}_{\mathcal{H}_{p,\theta;r;\log 2}^M(0,\infty)} 
    \leq 
    C
    \n{u_{0,{\rm h}}}_{\mathcal{D}_{p,\theta}^{\rm h}}
    \exp\lp{C\n{u_{0,3}}_{D^{\rm v}_{p,\theta}}}, 
    \\
    &
    \n{u_3}_{\mathcal{V}^M_{p,\theta;r}(0,\infty)}
    \leq{}
    C
    \n{u_{0,3}}_{D^{\rm v}_{p,\theta}} 
    +
    C.
\end{align}
Since $M$ is arbitrary, we complete the construction of the space-time analytic solution by letting $M \to \infty$.
For the uniqueness of solutions with $1 \leq p \leq 2$, our solution belongs to the solution class considered in \cite{Pai05} and thus the uniqueness lies on the result in \cite{Pai05}.
This completes the proof.
\end{proof}

\vspace{4mm}
{\bf{Acknowledgements.}} 
M. Fujii was supported by JSPS KAKENHI, Grant Number JP25K17279. 
Y. Li was supported by National Natural Science Foundation of China (12571228), Natural Science Foundation of Anhui Province (2408085MA018), Natural Science Research Project in Universities of Anhui Province (2024AH050055); he sincerely thanks Professor Yongzhong Sun for patient guidance and encouragement.

\vspace{4mm}
{\bf{Data Availability.}} Data sharing is not applicable to this article as no datasets were generated or analyzed
during the current study.
\vspace{4mm}

{\bf{Conflicts of interest.}} All authors certify that there are no conflicts of interest for this work.




\end{document}